 \newtheorem{thm}{Theorem}[section]
 \newtheorem{cor}[thm]{Corollary}
 \newtheorem{lem}[thm]{Lemma}
 \newtheorem{prop}[thm]{Proposition}
 \newtheorem{obs}[thm]{Observation}
 \theoremstyle{definition}
 \newtheorem{defn}[thm]{Definition}
 \newtheorem{exmp}[thm]{Example}
\numberwithin{equation}{section}
\definecolor{cyellow}{RGB}{255,246,177}
\newtheorem{claim}{Claim}
\newtheorem{claimproof}{Proof of Claim}
\begin{document}
\date{}
\title {On metric dimension of cube of trees}
\author{Sanchita Paul\thanks{Indian Statistical Institute, Kolkata, India. sanchitajumath@gmail.com}\thanks{(Corresponding author)},
Bapan Das\thanks {Department of Mathematics, Balurghat College, Balurghat 733101, India. mathbapan@gmail.com}
Avishek Adhikari \thanks{Department of Mathematics, Presidency University, Kolkata 700073, India. avishek.maths@presiuniv.ac.in},
Laxman Saha \thanks{Department of Mathematics, Balurghat College, Balurghat 733101, India. laxman.iitkgp@gmail.com}}

\maketitle

\begin{abstract}
\noindent
{\footnotesize Let $G=(V,E)$ be a connected graph and $d_{G}(u,v)$ be the shortest distance between the vertices $u$ and $v$ in $G$. A set $S=\{s_{1},s_{2},\hdots,s_{n}\}\subset V(G)$ is said to be a {\em resolving set} if for all distinct vertices $u,v$ of $G$, there exist an element $s\in S$ such that $d(s,u)\neq d(s,v)$. The minimum cardinality of a resolving set for a graph $G$ is called the {\em metric dimension} of $G$ and it is denoted by $\beta{(G)}$. A resolving set having $\beta{(G)}$ number of vertices is named as {\em metric basis} of $G$. The metric dimension problem is to find a metric basis in a graph $G$, and it has several real-life applications in network theory, telecommunication,
image processing, pattern recognition, and many other fields. In this article, we
consider {\em cube of trees} $T^{3}=(V, E)$, where any two vertices $u,v$ are adjacent if and only if the distance between them is less than equal to three in $T$. 
We establish the necessary and sufficient conditions of a vertex subset of $V$ to become a resolving set for $T^{3}$. This helps determine the tight bounds (upper and lower) for the metric dimension of $T^{3}$. Then, for certain well-known cubes of trees, such as caterpillars, lobsters, spiders, and $d$-regular trees, we establish the boundaries of the metric dimension. Further, we characterize some restricted families of cube of trees satisfying 
$\beta{(T^{3})}=\beta{(T)}$. We provide a construction showing the existence of a cube of tree attaining every positive integer value as their metric dimension.
}
\end{abstract}

\noindent
{\scriptsize Keywords:} {\footnotesize resolving set; metric basis; metric dimension; trees}

\noindent
{\scriptsize 2010 Mathematical Subject Classification:} {\footnotesize Primary:
05C12, 05C05, 05C76}

\section{Introduction}
\noindent For a simple undirected connected graph $G=(V,E)$, the length of the shortest distance between the vertices $u$ and $v$ in $G$ is denoted 
by $d_{G}(u,v)$. Instead of $d_{G}(u,v)$, we use $d(u,v)$ if $G$ is already predefined. The {\em code} of a vertex $w$ with respect to a vertex set $S=\{s_{1},\hdots,s_{n}\}\subseteq V$ (denoted by $c(w|S)$) is a $n-$tuple $(d(w,s_{1}),\hdots,d(w,s_{n}))$. A vertex $s$ {\em resolves} two distinct vertices $u,v$ of $V$ when $c(u|s)\neq c(v|s)$, i.e., $d(u,s)\neq d(v,s)$ considering $S=\{s\}$. In the same sense, $S$ is said to be a {\em resolving set} for $G$ if for every two distinct vertices $u,v$ of $V$, we have $c(u|S)\neq c(v|S)$, i.e., for any such $u\neq v$ there exists a vertex $s\in S$ which resolves $u,v$. If no $s\neq u,v$ is found to satisfy the above criteria, then we include one among $u$ or $v$ in $S$ (cf. for a $n$-vertex complete graph $K_{n}$, $S$ contains $n-1$ vertices in it). The smallest possible resolving set is said to be {\em metric basis} and its cardinality is called {\em metric dimension} of the graph $G$ (in short dim${(G)}$). For convenience, $\beta{(G)}$ is used to denote the metric dimension of a graph $G$. The metric basis does not need to be unique for a given graph $G$.

\vspace{0.5em}
 The problem of determining metric dimension is NP-complete for many restricted classes of graphs such as planer graphs, split graphs, bipartite and co-bipartite graphs, line graphs of bipartite graphs, etc \cite{GJ}. Finding the metric basis of connected graphs was introduced independently by Slater \cite{S} and Harary and Melter \cite{HM} in 1975 and 1976, respectively for uniquely identifying every vertex in a graph. They found a polynomial-time characterization for the metric dimension of trees. After that, Khuller et al. \cite{K} gave a similar characterization for the metric dimension of trees and developed a linear time algorithm for obtaining the metric basis (they call it landmarks). Sometimes the elements of a metric basis are treated as sensors \cite{CZ} in a real-world network to preserve system security by transferring information or messages within a fixed group only. Finding such a minimal group (landmark) is also crucial in robot navigation problem \cite{K} where the robot can uniquely determine its position by the presence of distinctly labeled landmarks. For more extensive applications of metric basis in various fields, such as optimization, network discovery, telecommunication, geographical routing protocols, image processing, pattern recognition, chemistry, and others, one may see \cite{BEE, CEJO, JRA, KRR, LG, MT, St}. Recently, it has been proven that the metric dimension is FPT parameterized by treewidth on chordal graphs \cite{BDP}.

\vspace{0.5em}
The power graph has been extensively explored in the past due to its intriguing features and wide range of applications in routing in networks, quantum random walk in physics, etc. Alholi et al. \cite{AAE} have determined the upper bound for the power of paths. In 2021, Nawaz et al. \cite{NAKK} proved that the metric dimension of path power three and four is unbounded; they also proved some results on the edges of the power of path and power of total graph. Saha et al. \cite{Saha} presented a lower bound for the metric dimension of $P_{n}^{r}$ and then built up a resolving set with cardinality that is the same as that of the lower bound. Also, they have investigated the bounds of metric dimension for the square of trees. Due to the widespread applications of power graphs and motivated by the above results, in this article, we study {\em cube of trees} $T^{3}=(V, E)$ where any two vertices $u,v\in V$ are adjacent if and only if $d_{T}(u,v)\leq 3$.

\vspace{0.5em}
The rest of the paper is organized as follows: Firstly, Section $2$ represents a detailed explanation of all the terms and expressions that will be used later on to establish the corresponding results for the metric dimension of $T^{3}$. In Section $3$, we have proved some essential lemmas on the properties of resolvability in $T^{3}$ that facilitate determining the resolving set of the cube of a tree. In Section $4$, first, we provide the necessary and sufficient conditions of a vertex subset of $V$ to become a resolving set for $T^{3}$. Next in Section $5$ and in Section $6$, we build tight bounds (lower and upper) for $\beta{(T^{3})}$ depending upon the number of {\em short legs}, {\em long legs}, {\em major stems} and their positional appearance in the tree $T$. It is a worthy task to construct a resolving set for proving the upper bound of $T^{3}$.
Furthermore, we provide a construction showing the existence of a family of cube of trees attaining every positive integer as their metric dimension. In Section $7$, we analyze the metric dimension or the bounds of it for some well-known cube of trees, including caterpillars, lobsters, spiders, $d$-regular trees. Lastly, in Section $8$, we restrict our findings to those cube of trees that have pendants as their legs and all of their stems lie on a central path and characterize such graph classes that satisfy $\beta{(T^{3})=\beta{(T)}}$.  In the conclusion section, we keep the challenge open to determine the bounds of metric dimension for any power of trees $T^{r}$ (say) where $r\geq 4$.

\section{Preliminaries}
  For a tree $T=(V, E)$, a vertex $v\in V$ of degree at least three is called {\em core vertex} or {\em core}, a vertex of degree two and one is said to be {\em path vertex} and {\em leaf} respectively  \cite{AE}. If we remove a vertex $v$ from $T$ then $T\setminus\{v\}$ induces a deg$(v)$ number of subtrees or components.
A {\em branch} at a vertex $v$ is the subgraph induced by $v$ and one of the components of $T\setminus \{v\}$. A branch $B$ of $T$ at $v$ which is a path is called {\em branch path} (also known as {\em leg}) \cite{S}. The vertex $v$ in a branch path satisfying deg$(v)\geq 3$ is called {\em stem} of the branch path \cite{HMSW}. It is easy to observe that not every core vertex is a stem.
  
\begin{defn}\label{majorstem}
A vertex of a tree $T=(V, E)$ is said to be a {\em major stem} if it is a stem containing at least two legs. Other stems are called {\em minor stems}. A leg of length greater than or equal to three is said to be {\em long leg}, other legs that have a length less than three are said to be {\em short legs}. We call a short leg of length two as {\em mid leg} and a short leg of length one as {\em pendant}. 
\end{defn}

\begin{obs}
Let $T=(V, E)$ contain at least one stem. Then the following are true:

$i)$ Two legs adjacent to the same stem vertex $v\in V$ are disjoint except for the common stem $v$.

$ii)$ Any two legs adjacent to two distinct stems must be disjoint. 
\end{obs}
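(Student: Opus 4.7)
My plan is to work directly from the definitions of branch, leg, and stem given in Section~2. Part (i) is nearly tautological: by definition, each leg at stem $v$ is the subgraph induced by $v$ together with one of the connected components of $T\setminus\{v\}$, and distinct components of $T\setminus\{v\}$ are vertex-disjoint, so two legs sharing the stem $v$ meet exactly in $\{v\}$.

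For part (ii), my first step would be to record a small structural lemma: if $L$ is any leg with stem $s$, then every vertex $x\in L\setminus\{s\}$ has $\deg_T(x)\leq 2$. The reason is that $L=\{s\}\cup C$ for some connected component $C$ of $T\setminus\{s\}$; every $T$-neighbour of $x\in C$ must lie in $L$ (since $C$ is a whole component of $T\setminus\{s\}$), and because $L$ is a path there are at most two such neighbours.

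Now let $L_1, L_2$ be legs at distinct stems $u,v$, and suppose for contradiction $L_1\cap L_2\neq\emptyset$. Since $u$ and $v$ both have $T$-degree at least $3$, the lemma forces $v\notin L_1$ and $u\notin L_2$. Hence any shared vertex $w\in L_1\cap L_2$ must satisfy $w\notin\{u,v\}$. Writing $L_1=\{u\}\cup C_1$, the component $C_1$ of $T\setminus\{u\}$ has no $T$-edges leaving $L_1$ except through $u$; therefore any path in $T$ that starts outside $L_1$ and ends inside $L_1$ must pass through $u$. Applied to the unique $v$-to-$w$ path in $T$, which equals the subpath of $L_2$ between $v$ and $w$ (since $L_2$ is itself a path in $T$ and trees have unique paths), this forces $u$ to lie on that subpath, so $u\in L_2$, contradicting $u\notin L_2$.

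The only non-routine point is this last one, where I combine uniqueness of paths in $T$ with the observation that a leg is attached to the rest of the tree only through its stem; everything else is unpacking definitions. The degree lemma, which is what lets me push the stems $u$ and $v$ out of the opposite legs at the start, is the linchpin that makes the rest of the case analysis trivial.
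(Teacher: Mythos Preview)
Your proof is correct. The paper states this result as an Observation with no accompanying proof, treating both parts as immediate from the definitions of branch, leg, and stem; there is therefore no argument in the paper to compare against. Your write-up supplies exactly the details one would expect: part~(i) is the disjointness of components of $T\setminus\{v\}$, and for part~(ii) your degree lemma (that non-stem vertices of a leg have $\deg_T\le 2$) cleanly rules out $u\in L_2$ and $v\in L_1$, after which the unique-path property of trees yields the contradiction.
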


\begin{thm} \cite{S}\label{component}
Let $T=(V,E)$ be a tree of order $|V|\geq 3$. Then $S\subseteq V$ forms a resolving set if and only if for each vertex $x$ there are vertices from $S$ on at least deg$(x)-1$ of the deg$(x)$ components of $T\setminus\{x\}$.
\end{thm}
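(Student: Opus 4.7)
I would argue the two directions separately.

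For necessity I would use the contrapositive. Suppose some vertex $x \in V$ fails the condition, i.e.\ there exist two distinct components $C_1, C_2$ of $T \setminus \{x\}$ containing no element of $S$. Let $u_i$ be the neighbor of $x$ lying inside $C_i$ for $i=1,2$. Then for every $s \in S$ the unique $s$-to-$u_i$ path is forced through $x$ (since $s \notin C_i$), giving $d(s, u_i) = d(s, x) + 1$ for both $i$; hence no $s \in S$ separates $u_1$ from $u_2$, contradicting the assumption that $S$ resolves.

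For sufficiency I would fix two distinct vertices $u, v$ and analyze their unique path $P$ in $T$. The main observation to formalize is: if $w(s)$ denotes the vertex of $P$ closest to $s$, then
\[
d(s, u) - d(s, v) \;=\; d(w(s), u) - d(w(s), v),
\]
because both the $s$-$u$ and the $s$-$v$ paths first meet $P$ at $w(s)$ and then proceed along $P$. Consequently $s$ fails to resolve $\{u,v\}$ if and only if $w(s)$ is an exact midpoint of $P$. If $P$ has odd length no such midpoint exists and every vertex of $T$ resolves $u,v$, so the only real work lies in the even-length case (nonemptiness of $S$ follows by applying the hypothesis to any internal vertex of $T$, which exists because $|V| \geq 3$).

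Assume now that $P$ has even length and let $w$ be its midpoint. By distinctness of $u$ and $v$ the vertex $w$ is strictly interior to $P$, so $\deg_T(w) \geq 2$. Let $C_u, C_v$ be the distinct components of $T \setminus \{w\}$ meeting $P$ on the $u$-side and $v$-side respectively. By the observation above, the only vertices that might fail to resolve $u, v$ are $w$ itself together with the vertices outside $C_u \cup C_v$. Applying the hypothesis at $x = w$, at most one of the $\deg(w)$ components of $T \setminus \{w\}$ is devoid of $S$, so among the two distinguished components $C_u, C_v$ at least one must contain some $s \in S$; that $s$ resolves $u$ and $v$.

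The step I expect to be the main technical point is extracting the midpoint characterization of nonresolving vertices and then translating the numerical bound $\deg(w)-1$ into the concrete statement that $S$ must meet $C_u$ or $C_v$. Minor side issues, such as $u$ or $v$ or $w$ themselves lying in $S$, dissolve immediately once this characterization is in place, since membership in $\{u,v\}$ or coincidence with the midpoint are already handled by the displayed identity above.
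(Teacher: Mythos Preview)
Your argument is correct. The necessity direction is exactly the standard one: two $S$-free components of $T\setminus\{x\}$ yield two neighbors of $x$ that are equidistant from every element of $S$. Your sufficiency argument via the midpoint characterization is also sound; the key reduction $d(s,u)-d(s,v)=d(w(s),u)-d(w(s),v)$ holds because in a tree the geodesic from $s$ to either endpoint first reaches $P$ at $w(s)$ and then runs along $P$, and your handling of the parity split and the nonemptiness of $S$ is fine.

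As for comparison with the paper: there is nothing to compare against. The paper does not prove this theorem at all; it merely quotes it from Slater~\cite{S} and moves on. So your write-up is not an alternative to the paper's proof but rather a self-contained justification of a result the authors take as background. If anything, your sufficiency argument is slightly more explicit than the usual folklore version (which often just says ``pick the vertex on the $u$--$v$ path where the branch to $s$ leaves, and observe it cannot be the midpoint for all $s$''), but the underlying idea is identical.
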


\noindent The problem of computing the metric dimension of trees was solved in linear time by Khuller et al. \cite{K} in 1996.

\begin{thm}\cite{K}\label{0} 
Let $T=(V, E)$ be a tree that is not a path. If $l_{v}$ is the number of legs attached to the vertex $v$. Then
\begin{equation}
\beta{(T)}= \sum_{\substack{v\in V:l_{v}>1}} (l_{v}-1)
\end{equation}                  
\end{thm}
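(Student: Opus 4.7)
The plan is to prove matching lower and upper bounds on $\beta(T)$ using Slater's resolving-set criterion (Theorem~\ref{component}).

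\emph{Lower bound.} Fix a major stem $v$ with legs $L_1,\dots,L_{l_v}$ and, for each $i$, let $u_i$ be the neighbor of $v$ on $L_i$. For any $s\notin L_i\cup L_j$ the shortest $s$-$u_i$ and $s$-$u_j$ paths both pass through $v$, so $d(s,u_i)=d(s,v)+1=d(s,u_j)$. To resolve $\{u_i,u_j\}$ any resolving set $S$ must therefore meet $L_i\cup L_j$; ranging over all pairs, at most one leg at $v$ can avoid $S$, giving $|S\cap(L_1\cup\cdots\cup L_{l_v})|\ge l_v-1$. Since legs at distinct stems are vertex-disjoint (as noted earlier), summing over major stems yields $\beta(T)\ge \sum_{v:\,l_v>1}(l_v-1)$.

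\emph{Upper bound.} For each major stem $v$, I designate one leg at $v$ as \emph{skipped} and insert the leaf endpoints of the other $l_v-1$ legs at $v$ into $S$; then $|S|=\sum(l_v-1)$. To check that $S$ resolves $T$, I apply Theorem~\ref{component}: for every $x\in V$, at least $\deg(x)-1$ of the components of $T\setminus\{x\}$ should meet $S$. I classify each component $C$ of $T\setminus\{x\}$ by whether the branch $C\cup\{x\}$ is a leg at $x$. Among path-shaped branches, if $x$ is itself a major stem then the construction already places endpoints into all but the skipped leg at $x$, while if $x$ is not a major stem then at most one leg is attached to $x$ in total. For a non-path component $C$, I would show $C$ necessarily contains a major stem of $T$ all of whose chosen leaf endpoints lie in $C$, so $C$ meets $S$. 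A short bookkeeping argument then shows that at most one component of $T\setminus\{x\}$ can fail to meet $S$, verifying the Slater condition.

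\emph{Main obstacle.} The key technical step is establishing that every non-path component $C$ of $T\setminus\{x\}$ contains a major stem of $T$ with chosen endpoints inside $C$. The argument rests on two ingredients. First, since leg interiors consist only of degree-$2$ vertices and $\deg_T(x)\ge 3$, any leg of $T$ terminating at a leaf of $T$ in $C$ must lie entirely inside $C$, including its stem. Second, root $C$ at its boundary vertex $y_i$ (the neighbor of $x$ in $C$) and pick a core vertex $u$ of $T$ at maximum depth within $C$; then every child-subtree of $u$ in $C$ is free of further cores and is therefore a path, hence a leg of $u$ in $T$. Because $\deg_T(u)\ge 3$ forces $u$ to have at least two such children inside $C$, $u$ is a major stem all of whose legs lie in $C$, so at least $l_u-1\ge 1$ of the leaf endpoints chosen at $u$ belong to $C$. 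Combined with the easy cases, this delivers the Slater condition for the constructed $S$ and hence completes the proof of the equality.
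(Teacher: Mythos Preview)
The paper does not contain a proof of Theorem~\ref{0}; it is quoted from Khuller--Raghavachari--Rosenfeld~\cite{K} without argument, so there is nothing in the paper to compare your proof against. Your overall plan---a pair-resolving lower bound at each major stem plus an explicit construction checked via Slater's criterion (Theorem~\ref{component})---is the standard route and is essentially correct.

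One small gap to patch in the lower bound: under the paper's convention a leg at $v$ \emph{includes} $v$ itself (a leg is a branch, and a branch is defined to contain $v$), so all the $L_i$ share the vertex $v$. Your deduction ``at most one leg at $v$ can avoid $S$, giving $|S\cap(L_1\cup\cdots\cup L_{l_v})|\ge l_v-1$'' then fails as written---for instance $S=\{v\}$ would make every $L_i$ meet $S$ while contributing only one vertex to the count. The remedy is immediate: since $d(v,u_i)=d(v,u_j)=1$, the stem $v$ itself does not resolve $u_i,u_j$, so $S$ must in fact meet $(L_i\cup L_j)\setminus\{v\}$; the sets $L_i\setminus\{v\}$ are pairwise disjoint (this is the content of the Observation preceding Theorem~\ref{component}), and now the inequality $\ge l_v-1$ holds and sums correctly across stems. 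In the upper bound, your max-depth-core argument for non-path components is correct; note only that the phrase ``all of whose chosen leaf endpoints lie in $C$'' is stronger than what you actually establish and is not always literally true (the parent-direction branch of $u$ could itself be a leg when $\deg_T(x)\le 2$), but since $u$ has at least two legs inside $C$ and at most one leg of $u$ is skipped globally, $C\cap S\neq\emptyset$ in every case, which is all you need.
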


\noindent As the minor stems of a tree cannot have more than one leg as its branch, it is important to note the following from Theorem \ref{0}. 

\begin{cor}\label{6}
Let $T$ be a tree that is not a path. Then $\beta{(T)}=\sum\limits_{\substack{v\in V^{\prime}}}(l_{v}-1)$ where $V^{\prime}$ denotes the set of all major stems of $T$ and $l_{v}$ is the number of legs attached to the major stem $v$.
\end{cor}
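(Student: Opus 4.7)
The plan is to derive the corollary as a simple re-indexing of the sum appearing in Theorem \ref{0}, by showing that the index set $\{v \in V : l_v > 1\}$ coincides with the set $V'$ of major stems whenever $T$ is not a path. So the whole task reduces to verifying this set equality and then invoking the theorem directly.

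First I would handle the easy inclusion $V' \subseteq \{v : l_v > 1\}$. By Definition \ref{majorstem}, any major stem carries at least two legs, so $l_v \geq 2 > 1$ by definition; this is immediate and requires no use of the non-path hypothesis.

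The reverse inclusion $\{v : l_v > 1\} \subseteq V'$ is where the hypothesis ``$T$ is not a path'' does the work. Suppose $v \in V$ satisfies $l_v \geq 2$. Then $v$ has at least two legs, which are vertex-disjoint branches at $v$ by the observation preceding the corollary, so $\deg(v) \geq 2$. I would argue that $\deg(v) = 2$ is impossible: if $v$ had exactly two neighbors and both of the corresponding branches were legs (paths), the entire tree $T$ would be the concatenation of these two paths through $v$, i.e., $T$ itself would be a path, contradicting the hypothesis. Hence $\deg(v) \geq 3$, so $v$ is a stem, and since it already carries at least two legs it is by Definition \ref{majorstem} a major stem, i.e., $v \in V'$.

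Having established that the two index sets agree, the corollary follows immediately from Theorem \ref{0} by substitution:
\begin{equation*}
\beta(T) = \sum_{v \in V : l_v > 1} (l_v - 1) = \sum_{v \in V'} (l_v - 1).
\end{equation*}
The only subtle point is the argument ruling out $\deg(v) = 2$ in the second inclusion, where one must be careful to use the non-path hypothesis; everything else is bookkeeping. I would not expect any real obstacle, since the corollary is essentially an observational restatement of the theorem after absorbing the trivially contributing terms ($l_v \leq 1$) and recognizing that minor stems, carrying exactly one leg, contribute $l_v - 1 = 0$ anyway and can therefore be omitted from the sum without changing its value.
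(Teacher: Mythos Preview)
Your proposal is correct and follows essentially the same approach as the paper, which offers only the one-sentence justification ``As the minor stems of a tree cannot have more than one leg as its branch, it is important to note the following from Theorem \ref{0}'' before stating the corollary without further proof. Your argument is simply a more careful unpacking of this remark: you explicitly verify that any vertex with $l_v > 1$ must have degree at least three (using the non-path hypothesis to rule out $\deg(v)=2$) and hence must be a major stem, whereas the paper leaves this implicit.
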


\vspace{0.3em}

\noindent \textbf{Notation.}  Let $P=(u,u_{1},\hdots, v_{1},v)$ be the path on a tree $T$ between the vertices $u$ and $v$. Here $u_{1},v_{1}$ are either the intermediate vertices of the above path $P$ considering $d_{T}(u,v)\geq 2$ ($u_{1}$ can be equal to $v_{1}$ also when $d_{T}(u,v)=2$) or end vertices when $d_{T}(u,v)=1$ (i.e., $u_{1}=v_{1}=v$ or $u=u_{1}=v_{1}$ or $u=u_{1},v=v_{1}$). We denote $T_{u} (T_{v})$ to be the component of $T$ containing the vertex $u (v)$, obtained after deletion of the edge $uu_{1}$($v_{1}v$). A vertex $x$ is said to be within the same component of $u$ and $v$ (say $T_{u,v}$) only when $x$ occurs within the intermediate path of $u,v$ or it lies in some branch of $T$ attached to some intermediate vertex of $u,v$.

\begin{defn}\label{cube of tree}
Let $T=(V,E)$ be a tree. A graph $T^{3}=(V,\hat{E})$ is said to be {\em cube of tree} of $T$ if the vertex set $V$ remains same as in $T$ and the edge set 
$\hat{E}=E\cup \{uv \hspace{0.1em}|\hspace{0.1em}  2\leq d_{T}(u,v)\leq 3\}$. 
\end{defn}

\noindent The distance between any two vertices $u,v$ in $T^{3}$ is measured by $d_{T^{3}}(u,v)=\left\lceil \dfrac{d_{T}(u,v)}{3} \right\rceil$. We will use the notations 
$V(T^{3})$ and $E(T^{3})$ to denote the vertex set and edge set of $T^{3}$.

\section{Properties regarding resolvability in $T^{3}$}
In this section, we give some basic properties and results of the resolving set of $T^{3}$. We have established certain essential lemmas that are beneficial for determining the resolving set of $T^{3}$.

\begin{lem}\label{1}
Let $T=(V,E)$ be a tree. Then every resolving set of $T^{3}$ is also a resolving set of $T$.  
\end{lem}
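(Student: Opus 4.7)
The plan is to argue by contrapositive and exploit the explicit distance formula $d_{T^3}(u,v)=\lceil d_T(u,v)/3\rceil$ given just before the statement. Concretely, I would suppose $S\subseteq V$ fails to be a resolving set of $T$, produce a witness pair that is also unresolved in $T^3$, and conclude.

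First I would pick distinct vertices $u,v\in V$ that are not resolved by $S$ in $T$; by definition this means $d_T(s,u)=d_T(s,v)$ for every $s\in S$. Next I would apply the distance formula for $T^3$ coordinatewise: for any $s\in S$,
\[
d_{T^3}(s,u)=\left\lceil\frac{d_T(s,u)}{3}\right\rceil=\left\lceil\frac{d_T(s,v)}{3}\right\rceil=d_{T^3}(s,v),
\]
since applying the same function (here $\lceil\cdot/3\rceil$) to equal inputs yields equal outputs. Therefore $c(u\mid S)=c(v\mid S)$ in $T^3$ as well, so $S$ does not resolve $u$ and $v$ in $T^3$, contradicting the hypothesis that $S$ is a resolving set of $T^3$.

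This gives the contrapositive: if $S$ is not a resolving set of $T$, then $S$ is not a resolving set of $T^3$, which is exactly the statement. There is no real obstacle here because the ceiling map preserves equality; the only thing one must be careful about is to quantify correctly (equality of codes must hold for every $s\in S$, not just one), and to invoke the distance identity $d_{T^3}(\cdot,\cdot)=\lceil d_T(\cdot,\cdot)/3\rceil$ that was recorded immediately after Definition~\ref{cube of tree}. The converse direction is of course false in general, which is the reason the subsequent sections must work harder to characterize resolving sets of $T^3$.
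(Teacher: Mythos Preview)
Your proof is correct and essentially identical to the paper's: both arguments rest on the single observation that the map $d\mapsto\lceil d/3\rceil$ preserves equality, so unequal $T^{3}$-distances force unequal $T$-distances. The only cosmetic difference is that the paper phrases it directly (start from a resolving set of $T^{3}$ and deduce resolution in $T$) while you phrase it as the contrapositive.
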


\begin{proof}
Let $S$ be a resolving set of $T^{3}$ and $u,v\in V$ be any two vertices. Since $V(T^{3})=V(T)$ and $S$ is a resolving set of $T^{3}$, there exists a vertex $s\in S$ such that $d_{T^{3}}(s,u)\neq d_{T^{3}}(s,v)$, which imply $\lceil \dfrac{d_{T}(s,u)}{3} \rceil \neq \lceil\dfrac{d_{T}(s,v)}{3} \rceil$. 
Hence we get $d_{T}(s,u)\neq d_{T}(s,v)$, i.e., $s$ resolves the vertices $u$ and $v$ in $T$. Therefore, $S$ forms a resolving set for $T$.
\end{proof}

\noindent We can immediately draw some conclusion from the above lemma.
\begin{cor}\label{1.5}
For any tree $T$, $\beta{(T^{3})}\geq \beta{(T)}$. 
\end{cor}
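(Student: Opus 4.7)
The statement is a direct structural consequence of Lemma \ref{1}, so my plan is simply to unfold the definition of metric dimension and quote that lemma; there is no real obstacle here, only a contrapositive-flavoured observation to record.

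First I would fix a metric basis $S^{*}$ of $T^{3}$, so that by definition $|S^{*}| = \beta(T^{3})$ and $S^{*}$ is a resolving set of $T^{3}$. Next I would invoke Lemma \ref{1} verbatim: because every resolving set of $T^{3}$ is also a resolving set of $T$, the set $S^{*}$ is a resolving set of $T$. Finally, since $\beta(T)$ is by definition the \emph{minimum} cardinality of any resolving set of $T$, I would conclude
\[
\beta(T) \;\leq\; |S^{*}| \;=\; \beta(T^{3}),
\]
which is the desired inequality.

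The only thing worth being a little careful about is the degenerate case when $T$ is a path (where the definitions and Theorem \ref{0} behave differently), but even there Lemma \ref{1} applies since its proof used only the ceiling identity $d_{T^{3}}(u,v)=\lceil d_{T}(u,v)/3\rceil$ and did not assume $T$ was non-path, so no separate case analysis is needed. Thus the corollary follows in a single line from Lemma \ref{1}, and the main ``work'' has already been done in that lemma.
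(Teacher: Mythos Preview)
Your proof is correct and essentially identical to the paper's own argument: the paper also fixes a metric basis $S$ of $T^{3}$, applies Lemma \ref{1} to conclude that $S$ resolves $T$, and deduces $\beta(T)\leq |S|=\beta(T^{3})$. Your remark on the path case is a harmless extra observation not present in the paper, but it does not change the approach.
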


\begin{proof}
Let $S$ be a metric basis for $T^{3}$. Then $|S|=\beta{(T^{3})}$. Using Lemma \ref{1}, we get $S$ to be a resolving set of $T$ also. Therefore, $\beta{(T)}\leq |S|=\beta{(T^{3})}$.
\end{proof}

\begin{lem}\label{2}
Let $T=(V,E)$ be a tree and $S$ be a resolving set of $T^{3}$. Then for every vertex $x\in V$, $S$ contains a vertex from each component of $T\setminus \{x\}$ with one exception.
\end{lem}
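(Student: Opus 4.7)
The plan is to deduce this almost immediately from material already proved in the excerpt. By Lemma \ref{1}, any resolving set $S$ of $T^{3}$ is also a resolving set of $T$. Slater's characterization (Theorem \ref{component}) then says that for every $x\in V$, the set $S$ must meet at least $\deg(x)-1$ of the $\deg(x)$ components of $T\setminus\{x\}$. That is exactly the desired statement: at most one component of $T\setminus\{x\}$ fails to contain an element of $S$.

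If a more self-contained argument is preferred, I would run the natural contrapositive directly in $T^{3}$. Suppose, for contradiction, that some $x\in V$ has two distinct components $C_{1},C_{2}$ of $T\setminus\{x\}$ with $S\cap C_{1}=S\cap C_{2}=\emptyset$. Let $u_{i}$ be the unique neighbor of $x$ in $C_{i}$ (the vertex of $C_{i}$ closest to $x$ in $T$). Since $u_{i}\in C_{i}$ we have $u_{1}\neq u_{2}$ and $u_{1},u_{2}\notin S$.

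For any $s\in S$, the unique $s$--$u_{i}$ path in $T$ must pass through $x$, because $s$ lies outside $C_{i}$. Hence $d_{T}(s,u_{i})=d_{T}(s,x)+1$ for both $i=1,2$, so $d_{T}(s,u_{1})=d_{T}(s,u_{2})$. Applying $d_{T^{3}}(\cdot,\cdot)=\lceil d_{T}(\cdot,\cdot)/3\rceil$ gives $d_{T^{3}}(s,u_{1})=d_{T^{3}}(s,u_{2})$. Thus no vertex of $S$ resolves $u_{1}$ and $u_{2}$ in $T^{3}$, contradicting the assumption that $S$ is a resolving set of $T^{3}$.

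There is really no substantial obstacle here; the only points demanding minor care are that $u_{1}\neq u_{2}$ and that neither $u_{1}$ nor $u_{2}$ lies in $S$, both of which are immediate from $u_{i}\in C_{i}$ and $S\cap C_{i}=\emptyset$. The cleanest presentation in the paper is almost certainly the one-line deduction from Lemma \ref{1} and Theorem \ref{component}, with the direct argument above available as backup intuition.
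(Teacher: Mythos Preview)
Your proposal is correct, and your second (direct) argument is essentially the paper's own proof: assume two components $C_{i},C_{j}$ of $T\setminus\{x\}$ miss $S$, pick vertices $u\in C_{i}$, $v\in C_{j}$ at equal $T$-distance from $x$ (you take the specific choice $d_{T}(x,u)=d_{T}(x,v)=1$, the paper leaves this distance unspecified), and observe that every $s\in S$ reaches both through $x$, forcing $d_{T^{3}}(s,u)=d_{T^{3}}(s,v)$.

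Your first route, via Lemma~\ref{1} and Theorem~\ref{component}, is a genuinely different and shorter deduction that the paper does not use. It has the advantage of being a one-line corollary of results already in the excerpt, while the paper's direct argument (and your backup) has the virtue of being self-contained and making explicit the pair of unresolved vertices in $T^{3}$, which is informative for later constructions. Either approach is entirely adequate here.
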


\begin{proof}
On the contrary, let $T\setminus \{x\}$ has at least two components (say $C_{i},C_{j}$) satisfying $S\cap V(C_{i})=\emptyset$ and $S\cap V(C_{j})=\emptyset$. Let $u\in S\cap V(C_{i})$ and $v\in S\cap V(C_{j})$ satisfy $d_{T}(x,u)=d_{T}(x,v)$. Now any vertex $w\in V\setminus (V(C_{i})\cup V(C_{j}))$ must have to reach $u$ or $v$ via $x$. Therefore, $d_{T}(w,u)=d_{T}(w,x)+d_{T}(x,u)=d_{T}(w,x)+d_{T}(x,v)=d_{T}(w,v)$ and hence $d_{T^{3}}(w,u)=\lceil \dfrac{d_{T}(w,u)}{3}\rceil=\lceil \dfrac{d_{T}(w,v)}{3}\rceil=d_{T^{3}}(w,v)$. Therefore, a contradiction arises. Hence, the result follows.
\end{proof}

\noindent The following corollary is an essential tool for determining any resolving set of $T^{3}$. 

\begin{cor}\label{30}
Let $v$ be a major stem of a tree $T$ having $m$ legs $L_{1},L_{2},\hdots, L_{m}$. Then, for every resolving set $S$ of $T^{3}$, the following holds true.

\begin{enumerate}
\item $S\cap L_{i}\neq \emptyset$ for all $i\in \{1,2,\hdots,m\}$ with one exception.

\item $S$ contains at least $m-1$ vertices from the legs adjacent to $v$.
\end{enumerate}
\end{cor}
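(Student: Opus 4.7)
The plan is to derive both parts directly from Lemma \ref{2} applied at the major stem $v$. Deleting $v$ from $T$ breaks $T$ into several components, among which we find the $m$ leg paths $L_{i}\setminus\{v\}$ together with any further components arising from non-leg branches at $v$. By the Observation following Definition \ref{majorstem}, the legs $L_{1},\ldots,L_{m}$ are pairwise disjoint apart from the shared vertex $v$, so the leg components $L_{i}\setminus\{v\}$ are pairwise disjoint.

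For part (1), I would invoke Lemma \ref{2} with $x=v$, which asserts that at most one component of $T\setminus\{v\}$ fails to meet $S$. In particular, at most one of the leg components $L_{i}\setminus\{v\}$ can be disjoint from $S$. If $v\in S$ then $v\in L_{i}$ for every $i$, and hence $S\cap L_{i}\neq\emptyset$ for all $i$, so the exception is vacuous. If $v\notin S$, then $S\cap L_{i}=S\cap(L_{i}\setminus\{v\})$, and at most one of these intersections is empty. Either way, $S\cap L_{i}=\emptyset$ holds for at most one index $i$, which is the claimed single exception.

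For part (2), I would count the distinct vertices of $S$ contained in $\bigcup_{i=1}^{m}L_{i}$. Because the legs are pairwise disjoint away from $v$, selecting one $S$-vertex from each leg component $L_{i}\setminus\{v\}$ that meets $S$ produces that many distinct vertices of $S$, each of them different from $v$. By the argument used in part (1), at least $m-1$ of the $m$ leg components meet $S$, and hence $\bigl|S\cap\bigcup_{i}L_{i}\bigr|\geq m-1$.

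I do not foresee any real obstacle: the corollary is essentially a specialisation of Lemma \ref{2} to the structure of a major stem, with the disjointness of legs translating ``at most one exceptional component'' into ``at most one exceptional leg'' and into ``at least $m-1$ distinct $S$-vertices in the union of the legs.'' The only bookkeeping concerns whether $v$ itself lies in $S$ and whether the single exception allowed by Lemma \ref{2} (if used at all) is spent on a leg component or on a non-leg branch at $v$; in every case the resulting counts match the stated bounds.
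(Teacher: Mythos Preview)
Your argument is correct and matches the paper's intent: the paper states this result as an immediate corollary of Lemma \ref{2} without giving a separate proof, and your derivation---applying Lemma \ref{2} at $x=v$, using disjointness of the leg components $L_i\setminus\{v\}$, and handling the case $v\in S$---is exactly the specialisation the authors have in mind.
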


\begin{lem}\label{5}
Let $T=(V, E)$ be a tree, and $v\in V$ be a core of degree $m$. If $v$ is not a major stem, then there exist at least $m-1$ components of $T$ containing major stems.  
\end{lem}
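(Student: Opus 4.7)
The plan is to reduce the lemma to the statement that every \emph{non-path} component of $T\setminus\{v\}$ contains a major stem of $T$. Since $v$ is a core of degree $m$ that is not a major stem, $v$ has at most one leg in $T$, and a component of $T\setminus\{v\}$ is a path if and only if the corresponding branch at $v$ is a leg of $v$. Hence at most one of the $m$ components of $T\setminus\{v\}$ is a path, so at least $m-1$ of them are non-paths; the lemma will follow once I show that each non-path component contains a major stem of $T$.

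Fix a non-path component $C$ of $T\setminus\{v\}$ and let $w\in C$ be the unique neighbour of $v$ in $T$. The key sub-claim is that, starting from any leaf $\ell$ of $T$ lying in $C$ and walking along the unique simple path in $T$ through vertices of $T$-degree $2$, one must terminate at a core of $T$ lying inside $C$, producing a stem of $T$ in $C$ whose leg ends at $\ell$. The only alternative is that the walk exits $C$ via $w$ with $\deg_T(w)=2$; but then every vertex between $\ell$ and $w$ has $T$-degree $2$ as well, which forces $C$ to coincide with that path and contradicts the non-path assumption on $C$.

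Next, assume for contradiction that $C$ contains no major stem of $T$. Then every stem of $T$ lying in $C$ carries exactly one leg, so the map sending each leaf of $T$ in $C$ to its stem (which by the previous step lies in $C$) is an injection into the set of cores of $T$ in $C$. Writing $n_d^C:=|\{u\in C:\deg_T(u)=d\}|$, this gives
\[
n_1^C \;\le\; \sum_{d\ge 3} n_d^C .
\]
On the other hand, $C$ is a tree on $|C|$ vertices with $|C|-1$ internal edges, and the only edge of $T$ leaving $C$ is $wv$; a handshake count therefore yields $\sum_{u\in C}\deg_T(u)=2(|C|-1)+1=2|C|-1$. Combining with $\sum_d n_d^C=|C|$ gives
\[
n_1^C \;=\; 1+\sum_{d\ge 3}(d-2)\,n_d^C \;\ge\; 1+\sum_{d\ge 3} n_d^C ,
\]
which contradicts the injectivity bound. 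Hence $C$ contains a major stem of $T$, completing the proof.

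The main obstacle is the sub-claim in the second paragraph: one must carefully rule out the possibility that a leaf of $T$ in $C$ traces back through degree-$2$ vertices all the way out of $C$ to $v$, since otherwise the leaf-to-stem injection could land on $v$ and the counting would collapse. Once that step is secure, the handshake identity (where the single external edge $wv$ contributes the crucial $+1$) delivers the contradiction cleanly.
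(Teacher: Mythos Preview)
Your proof is correct, but it takes a genuinely different route from the paper's. The paper argues constructively: in each non-path branch $B$ at $v$ it picks the vertex $u$ of degree $\ge 3$ at \emph{maximum} distance from $v$, observes that every branch of $u$ going away from $v$ consists only of vertices of degree $\le 2$, and concludes that $u$ has at least two legs, i.e.\ is a major stem. This is a two-line extremal argument that actually exhibits the major stem.

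Your argument instead uses a counting contradiction: after establishing that each $T$-leaf in a non-path component $C$ has its stem inside $C$, you invoke the hypothesis ``no major stem in $C$'' to make the leaf-to-stem map injective, and then contrast $n_1^C\le\sum_{d\ge 3}n_d^C$ with the handshake identity $n_1^C=1+\sum_{d\ge 3}(d-2)n_d^C$ forced by the single boundary edge $wv$. This is slicker in that it packages the combinatorics into one inequality, and the sub-claim you flag as the ``main obstacle'' is handled correctly (if the walk from a leaf exits $C$ through $w$, all vertices along it have $T$-degree $\le 2$, so their $C$-degrees are $\le 2$ with $\ell$ and $w$ of $C$-degree $1$, forcing $C$ to be that path). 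The trade-off is that the paper's extremal choice is shorter and pinpoints an explicit major stem, whereas your method is non-constructive but illustrates a useful general technique for locating high-degree structure via degree-sum identities.
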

  
\begin{proof}
Since deg $(v)=m$, removing $v$ from $T$ will create $m$ components. Now, as $v$ is not a major stem, there can exist at most one branch attached to it, which is a path. Hence, there are $m-1$ branches containing at least one vertex in each of the branches, which have at least two branches out from them. Each of these $m-1$ branches is not the path. We consider one such branch $B$ of $v$ and a vertex $u$ on $B$ having deg $(u)\geq 3$ for which $d_{T}(v,u)$ is maximum. Therefore, one can verify that $u$ must possess at least two branch paths, and hence $u$ becomes a major stem of $B$, as well as of $T$ from Definition \ref{majorstem}. Similar logic holds true for all other branches of $v$ that are not paths. Hence, the result follows.
\end{proof}

\begin{cor}\label{disjoint}
Let $v$ be a core vertex of a tree $T$ having $m$ components $C_{1}, \hdots, C_{m}$. If any component $C_{i}$ contains $l_{i}$ major stems where $1\leq i\leq m$, then for every resolving set $S$ of $T^{3}$, $|S \cap C_{i}|\geq \sum \limits_{\substack{j=1}}^{l_{i}} (n_{j}-1)$, where $n_{j}$ is the number of legs attached to a major stem in $C_{i}$.
\end{cor}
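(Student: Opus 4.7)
I would fix the component $C_i$ and enumerate its major stems as $u_1,\ldots,u_{l_i}$, where $u_j$ carries $n_j$ legs. The plan is to apply Corollary~\ref{30} to each $u_j$ in turn, collect $n_j-1$ resolving-set vertices from its legs, and then verify that the pieces collected for different $j$ are pairwise disjoint and all contained in $C_i$; summing gives the bound.

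First I would check that every leg of every $u_j$ is entirely contained in $C_i$. A leg is a path whose internal vertices have degree $2$ in $T$, terminating at a leaf; the deleted core $v$ has degree at least three and is not a leaf, so $v$ cannot sit on any such path. Hence each leg of $u_j$, together with its stem $u_j$, lies wholly in the component $C_i$ of $T\setminus\{v\}$.

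Next, applying Corollary~\ref{30} to the stem $u_j$ produces at least $n_j-1$ vertices of $S$ drawn from the legs adjacent to $u_j$. Because Corollary~\ref{30} is obtained via Lemma~\ref{2} applied to $T\setminus\{u_j\}$, the components of $T\setminus\{u_j\}$ corresponding to the $n_j$ legs are disjoint (and exclude $u_j$ itself), so the $n_j-1$ vertices supplied are pairwise distinct and live in the legs \emph{minus} the stem $u_j$. Now I would invoke the earlier observation that legs attached to two distinct stems are pairwise disjoint, so the $n_j-1$ vertices produced for each $u_j$ form pairwise disjoint subsets of $C_i$. Summing over $j=1,\ldots,l_i$ yields
\[
|S\cap C_i|\;\geq\;\sum_{j=1}^{l_i}(n_j-1),
\]
as claimed.

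The only delicate point is to confirm that the $n_j-1$ vertices provided by Corollary~\ref{30} can in fact be taken strictly inside the legs (that is, different from the stem $u_j$), because otherwise the same vertex $u_j$ could conceivably be reused and the sums would overcount; this is the one place where I would have to unpack the derivation of Corollary~\ref{30} from Lemma~\ref{2}, since it is exactly the components of $T\setminus\{u_j\}$ — which by construction do not contain $u_j$ — that supply the vertices. Once this is verified, everything else is a clean bookkeeping argument using disjointness of legs at different stems.
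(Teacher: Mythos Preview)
Your proof is correct and follows essentially the same approach as the paper: both apply Lemma~\ref{2} (via Corollary~\ref{30}) to each major stem in $C_i$ to extract $n_j-1$ leg vertices, then sum. Your version is in fact more careful than the paper's, which leaves the disjointness of the contributions and their containment in $C_i$ implicit, whereas you explicitly verify both points using the observation on disjointness of legs and the fact that the core $v$ cannot lie on any leg.
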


\begin{proof}
By Lemma \ref{5}, it follows that at least $m-1$ components among $C_{1},\hdots, C_{m}$ contain major stems. If $C_{i}$ is not a branch path, then applying Lemma \ref{2} for each major stem of $C_{i}$ it follows that
$S$ contains at least $\sum\limits_{\substack{{j=1}}}^{l_{i}} (n_{j}-1)$ vertices from the legs adjacent to the major stems of $C_{i}$.
\end{proof}

\begin{lem}\label{3}
Let $T=(V,E)$ be a tree and $uv$ be an edge in $T^{3}$. Then a vertex $x\neq u,v$ resolves $u,v$ in $T^{3}$ if and only if the following happens.

\begin{itemize}
\item If $x$ belongs to at least one among $T_{u}$ or $T_{v}$ then either 

\begin{center}
$d_{T}(u,v)=3$ or\\
\vspace{0.2em}
$d_{T}(u,v)=2$ and $\text{min}\{d_{T}(x,u),d_{T}(x,v)\}\equiv 0  \hspace{0.3 em}or \hspace{0.3 em} 2\hspace{0.3 em}(\text{mod} \hspace{0.3 em} 3)$ or \\
 
\vspace{0.2em}
$d_{T}(u,v)=1$ and $\text{min}\{d_{T}(x,u),d_{T}(x,v)\}\equiv 0\hspace{0.3 em} (\text{mod} \hspace{0.3 em} 3)$
\end{center}

\item If $x$ belongs to $T_{u,v}$ then $d_{T}(u,v)=3$ and 
 $\text{min}\{d_{T}(x,u),d_{T}(x,v)\}\equiv \hspace{0.3 em} 0 \hspace{0.3 em}(\text{mod}  \hspace{0.3 em}3)$. 
 \end{itemize}
\end{lem}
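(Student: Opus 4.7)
The plan is to reduce everything to an arithmetic comparison of ceilings. Since $d_{T^3}(a,b) = \lceil d_T(a,b)/3 \rceil$, a vertex $x \neq u,v$ resolves $u,v$ in $T^3$ if and only if $\lceil d_T(x,u)/3 \rceil \neq \lceil d_T(x,v)/3 \rceil$. The whole proof will therefore consist of computing $d_T(x,u)$ and $d_T(x,v)$ in terms of a single parameter (the distance from $x$ to the point where the two tree-paths diverge) and then reading off the residues mod $3$.

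First I would treat the case $x \in T_u \cup T_v$. By symmetry, assume $x \in T_u$; then the unique $x$-to-$v$ path passes through $u$, so $d_T(x,v) = d_T(x,u) + d_T(u,v)$, and the minimum of the two distances is $a := d_T(x,u)$. Setting $k := d_T(u,v) \in \{1,2,3\}$, I would tabulate $\lceil a/3 \rceil$ versus $\lceil (a+k)/3 \rceil$ according to $a \bmod 3$:
\begin{itemize}
\item If $k=3$, the two ceilings always differ (by exactly one), so $x$ always resolves.
\item If $k=2$, they differ exactly when $a \equiv 0$ or $a \equiv 2 \pmod 3$.
\item If $k=1$, they differ exactly when $a \equiv 0 \pmod 3$.
\end{itemize}
Since $a = \min\{d_T(x,u), d_T(x,v)\}$, this matches the three bulleted subcases in the first clause of the lemma, and because the computation is an \emph{equivalence} for each residue, both necessity and sufficiency are obtained simultaneously.

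Next I would handle $x \in T_{u,v}$. Here it is essential to note that this set is empty when $d_T(u,v)=1$, so no condition is imposed. When $d_T(u,v)=2$, the unique intermediate vertex $w = u_1 = v_1$ lies on both paths from $x$, giving $d_T(x,u) = d_T(x,w)+1 = d_T(x,v)$; thus $x$ can never resolve, again matching the lemma. When $d_T(u,v)=3$, the intermediate vertices are the adjacent pair $u_1,v_1$, and the paths from $x$ must branch off from the $u$-$v$ path at $u_1$ or at $v_1$. WLOG (again by symmetry) take the branching vertex to be $u_1$; then $d_T(x,u) = b+1$ and $d_T(x,v) = b+2$ where $b := d_T(x,u_1)$, and $\min\{d_T(x,u),d_T(x,v)\} = b+1$. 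The ceilings $\lceil (b+1)/3 \rceil$ and $\lceil (b+2)/3 \rceil$ differ exactly when $b \equiv 2 \pmod 3$, equivalently $b+1 \equiv 0 \pmod 3$, which is the remaining bulleted condition.

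Finally I would note that the three zones $T_u$, $T_v$, $T_{u,v}$ exhaustively partition $V \setminus \{u,v\}$ (using the Notation paragraph preceding the definition of $T^3$), so the lemma follows by combining the cases. The only real obstacle is bookkeeping: making sure that the parameter chosen in each subcase is really $\min\{d_T(x,u), d_T(x,v)\}$ (so the stated congruence condition translates literally) and that the boundary situations (in particular $x = u_1$ or $v_1$, and $T_{u,v}$ empty when $d_T(u,v)=1$) are handled uniformly. Everything else is a direct ceiling computation on three residue classes.
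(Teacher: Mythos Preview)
Your proposal is correct and follows essentially the same approach as the paper: both proofs split into the cases $x\in T_u\cup T_v$ versus $x\in T_{u,v}$, reduce to computing $\lceil d_T(x,u)/3\rceil$ against $\lceil d_T(x,v)/3\rceil$ by parametrizing on the residue mod $3$ of the smaller distance, and handle the $T_{u,v}$ case via the branching vertex $u_1$ (or $v_1$) when $d_T(u,v)=3$. Your treatment is slightly more explicit about why $d_T(u,v)=2$ in $T_{u,v}$ forces $d_T(x,u)=d_T(x,v)$, but otherwise the arguments coincide.
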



\begin{proof} 
Since $uv\in E(T^{3})$, $1\leq d_{T}(u,v)\leq 3$ clearly.

\vspace{0.2em}
\noindent \textbf{Case I:} Without loss of generality, first we consider the case when $x\in T_{u}$. Then we can write $d_{T}(x,u)=3k+m$ and $d_{T}(x,v)=d_{T}(x,u)+d_{T}(u,v)=(3k+m)+d_{T}(u,v)$ for some integers $k,m$ where $k\geq0$, $0\leq m<3$. Hence $\text{min}\{d_{T}(x,u),d_{T}(x,v)\}=d_{T}(x,u)$.  
\vspace{0.2em}

 If $m=0$, $d_{T^{3}}(x,u)=\lceil{\dfrac{3k}{3}}\rceil=k\neq k+1=\lceil{\dfrac{3k+d_{T}(u,v)}{3}}\rceil= d_{T^{3}}(x,v)$ as $1\leq d_{T}(u,v)\leq 3$. Therefore, when $\text{min}\{d_{T}(x,u),d_{T}(x,v)\}=3k\equiv 0\hspace{0.3 em} (\text{mod} \hspace{0.3 em} 3)$, then $x$ resolves $u,v$.

\vspace{0.3em}
For $m=1$ or $2$,  $d_{T^{3}}(x,u)=\lceil{\dfrac{3k+m}{3}}\rceil=k+1$ and $d_{T^{3}}(x,v)=\lceil \dfrac{(3k+m)+d_{T}(u,v)}{3} \rceil=k+\lceil \dfrac{m+d_{T}(u,v)}{3}\rceil$. Now $x$ resolves $u,v$ if and only if $d_{T^{3}}(x,v)=k+2$ (since $d_{T}(u,v)\leq 3$). This can only happen when $m+d_{T}(u,v)>3$, i.e., when $m=1$ and $d_{T}(u,v)=3$ or when $m=2$ and $2\leq d_{T}(u,v)\leq 3$. Therefore, if $d_{T}(u,v)=3$ and $\text{min}\{d_{T}(x,u),d_{T}(x,v)\}\equiv 1 \hspace{0.3 em} \text{or} \hspace{0.3 em} 2\hspace{0.3 em} (\text{mod} \hspace{0.3 em} 3)$ or if  $d_{T}(u,v)=2$ and $\text{min}\{d_{T}(x,u),d_{T}(x,v)\}\equiv 2 \hspace{0.3 em} (\text{mod} \hspace{0.3 em} 3)$ then $x$ resolves $u,v$.

\vspace{0.3em}
\noindent \textbf{Case II:} Next, we consider the case when $x$ belongs to the same component of $u$ and $v$, i.e., in $T_{u,v}$. Since $x\neq u,v$, $d_{T}(u,v)>1$. Note that in this case, the only possibility of $x$ resolving $u,v$ is when $d_{T}(u,v)=3$ and $x$ occurs in some branch attached to $u_{1}$ or $v_{1}$ where $P=(u,u_{1},v_{1},v)$ is the path connecting $u,v$ in $T$. Without loss of generality, we assume $\text{min}\{d_{T}(x,u),d_{T}(x,v)\}=d_{T}(x,u)$. Then $x$ must be attached to the branch of $u_{1}$. Let $d_{T}(x,u_{1})=3k+m$ for some nonnegative integers $k,m$ satisfying $0\leq m<3$. Then $d_{T}(x,u)=d_{T}(x,u_{1})+d_{T}(u_{1},u)=(3k+m)+1$ and $d_{T}(x,v)=d_{T}(x,u_{1})+d_{T}(u_{1},v)=(3k+m)+2$. Therefore, $d_{T^{3}}(x,u)=k+\lceil \dfrac{m+1}{3}\rceil$ and $d_{T^{3}}(x,v)=k+\lceil \dfrac{m+2}{3}\rceil$. One can easily verify now that $x$ resolves $u,v \Longleftrightarrow d_{T^{3}}(x,u)\neq d_{T^{3}}(x,v)\Longleftrightarrow m=2$. Therefore, $\text{min}\{d_{T}(x,u),d_{T}(x,v)\}=d_{T}(x,u)=3k+3\equiv 0 \hspace{0.3em} (\text{mod} \hspace{0.3em} 3)$. 
\end{proof}  

If $uv$ is an edge in $T^{3}$, then depending upon the different values of $d_{T}(u,v)$ we can impose restrictions on the vertices that can resolve $u,v$. 

\begin{cor}\label{distance}
Let $T=(V,E)$ be a tree and $uv$ be an edge in $T^{3}$. Then a vertex $x\neq u,v$ resolves $u,v$ in $T^{3}$ if and only if the following are true:
\begin{enumerate}

\item  If $d_{T}(u,v)=1$, then at least one among any three consecutive vertices chosen from $T_{u}\setminus\{u\}$ or $T_{v}\setminus\{v\}$ must coincide with $x$. \label{one distance}

\item If $d_{T}(u,v)=2$, then $x$ must be one among any two consecutive vertices chosen from $T_{u}\setminus \{u\}$ or $T_{v}\setminus\{v\}$.\label{two distance}

\item If $d_{T}(u,v)=3$ then $x$ is either in $T_{u}\setminus\{u\}$ or $T_{v}\setminus\{v\}$ or it is one among any three consecutive vertices from any branch attached to $u_{1}$ or $v_{1}$ where $u_{1},v_{1}$ are the intermediate vertices of the path $(u,u_{1},v_{1},v)$ in $T$. \label{three distance}
 
\end{enumerate}
\end{cor}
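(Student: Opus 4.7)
The plan is to derive this corollary as a direct geometric reformulation of Lemma \ref{3}, translating the modular conditions on $\min\{d_T(x,u), d_T(x,v)\}$ into positional statements along paths in $T_u$, $T_v$, or branches hanging off $u_1, v_1$. Throughout I would assume without loss of generality that $x \in T_u$ or else sits on a branch of $u_1$, so that the minimum is realized by $d_T(x,u)$; the argument for the $v$-side is symmetric.

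For Case 1 ($d_T(u,v)=1$), Lemma \ref{3} says $x$ is a resolver iff $x \in T_u \setminus \{u\}$ (or $T_v \setminus \{v\}$) and $d_T(x,u) \equiv 0 \pmod 3$. So along any path in $T_u$ emanating from $u$, the resolving vertices sit at positions $3, 6, 9, \ldots$. Equivalently, every window of three consecutive vertices on such a path contains exactly one resolver, which is precisely statement (\ref{one distance}). For Case 2 ($d_T(u,v)=2$), the admissible residues in Lemma \ref{3} are $d_T(x,u) \equiv 0$ or $2 \pmod 3$, so the resolvers occupy positions $2,3,5,6,8,9,\ldots$ along each such path; any two consecutive positions include at least one of these, giving (\ref{two distance}).

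For Case 3 ($d_T(u,v)=3$), Lemma \ref{3} imposes no residue restriction when $x \in T_u \setminus \{u\}$ or $T_v \setminus \{v\}$, so every such vertex resolves $u,v$. If instead $x$ lies in $T_{u,v}$ on a branch attached to $u_1$, then $d_T(x,u) = d_T(x,u_1) + 1$, and the required congruence $d_T(x,u) \equiv 0 \pmod 3$ becomes $d_T(x,u_1) \equiv 2 \pmod 3$, i.e., positions $2,5,8,\ldots$ counted from $u_1$ along the branch. In every three consecutive vertices of that branch exactly one such position appears, matching statement (\ref{three distance}); the argument with $v_1$ in place of $u_1$ is identical.

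I do not anticipate a serious obstacle: the content of the corollary is entirely contained in Lemma \ref{3}, and the work is bookkeeping a short case analysis together with converting the congruences $\bmod\ 3$ into the equivalent positional phrasing about consecutive vertices. The only care required is handling both sides ($T_u$ versus $T_v$, and branches at $u_1$ versus $v_1$) symmetrically, and being explicit that on a tree, distances from a fixed endpoint of a path increase by one at each step, so "distance $\equiv r \pmod 3$" really does correspond to a fixed slot inside every window of three consecutive vertices.
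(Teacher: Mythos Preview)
Your approach is correct and essentially mirrors the paper's: both derive the corollary directly from Lemma~\ref{3} by observing that distances from a fixed vertex to consecutive vertices along a path cycle through the residues $\{0,1,2\}$ modulo $3$, so the modular conditions of Lemma~\ref{3} translate exactly into the stated density of resolvers within windows of two or three consecutive vertices. The paper's proof is terser (one sentence plus ``the rest is immediate from Lemma~\ref{3}''), while you spell out the three cases explicitly, but the mathematical content is the same.
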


\begin{proof}
It is easy to observe that the distance from a fixed vertex to any three (or two) consecutive vertices in $T^{3}$ must be different \footnote{it must be a 
$3$-permutation (or $2$-permutation) of the set $\{0,1,2\}$} computed in mod $3$. The rest of the verification is immediate from Lemma \ref{3}.
\end{proof}

\begin{lem}\label{4}
Let $T=(V,E)$ be a tree and $u,v$ be two nonadjacent vertices in $T^{3}$. Then a vertex $x\neq u,v$ resolves $u,v$ if and only if the following conditions are satisfied:

\begin{itemize}
\item $d_{T}(x,u)\neq d_{T}(x,v)$.

\item If $x$ belongs to $T_{u,v}$ then

\hspace{6em} min $\{d_{T}(x,u),d_{T}(x,v)\}\equiv 0 \hspace{0.2em} (\mbox{mod 3})$ and  $|d_{T}(x,v)-d_{T}(x,u)|\geq 1$ or

\hspace{6em} min $\{d_{T}(x,u),d_{T}(x,v)\}\equiv 1 \hspace{0.2em} (\mbox{mod 3})$ and $|d_{T}(x,v)-d_{T}(x,u)|\geq 3$ or

\hspace{6em} min $\{d_{T}(x,u),d_{T}(x,v)\}\equiv 2 \hspace{0.2em} (\mbox{mod 3})$ and $|d_{T}(x,v)-d_{T}(x,u)|\geq 2$.

\vspace{0.39em}

\item Any $x$ belonging to $T_{u}$ or $T_{v}$ can resolve $u,v$.

\end{itemize}
\end{lem}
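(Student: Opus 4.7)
The plan is to reduce everything to the formula $d_{T^{3}}(x,y)=\lceil d_{T}(x,y)/3\rceil$ and exploit the fact that nonadjacency of $u,v$ in $T^{3}$ forces $d_{T}(u,v)\geq 4$. I would partition the vertices $x\neq u,v$ into those lying in $T_{u}\cup T_{v}$ versus those lying in $T_{u,v}$ and treat these two cases separately, much as in the proof of Lemma \ref{3}.

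For $x\in T_{u}$ (the case $x\in T_{v}$ is symmetric), the $x$-to-$v$ path passes through $u$, so $d_{T}(x,v)=d_{T}(x,u)+d_{T}(u,v)$. Writing $d_{T}(x,u)=3k+m$ with $0\leq m\leq 2$, a direct ceiling computation yields $d_{T^{3}}(x,v)\geq d_{T^{3}}(x,u)+1$ in each of the three residue classes, since $m+d_{T}(u,v)\geq 4$ always pushes $\lceil (m+d_{T}(u,v))/3\rceil$ strictly above $\lceil m/3\rceil$. Hence every such $x$ resolves $u,v$, which establishes the third bullet of the statement.

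For $x\in T_{u,v}$, without loss of generality assume $d_{T}(x,u)\leq d_{T}(x,v)$ and set $d_{T}(x,u)=3k+m$ with $0\leq m\leq 2$ and $D=d_{T}(x,v)-d_{T}(x,u)\geq 0$. Then $d_{T^{3}}(x,u)=\lceil (3k+m)/3\rceil$ while $d_{T^{3}}(x,v)=k+\lceil (m+D)/3\rceil$. Splitting into the subcases $m=0,1,2$, the condition $d_{T^{3}}(x,u)\neq d_{T^{3}}(x,v)$ simplifies respectively to $D\geq 1$, $D\geq 3$, and $D\geq 2$, which are exactly the three clauses in the second bullet. The first bullet is then immediate, since $d_{T^{3}}(x,u)\neq d_{T^{3}}(x,v)$ trivially forces $d_{T}(x,u)\neq d_{T}(x,v)$ in all cases.

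The main step is not a deep obstacle but rather a careful bookkeeping across the three mod-$3$ subcases; the technique is already modeled on Lemma \ref{3}. The key structural point that makes these regimes cleaner than the edge case of Lemma \ref{3} is that $d_{T}(u,v)\geq 4$ rules out the boundary values of $d_{T}(u,v)\in\{1,2,3\}$ that produced the more restrictive resolving conditions there, so here the residue of $\min\{d_{T}(x,u),d_{T}(x,v)\}$ modulo $3$ alone, together with a single gap inequality on $D$, suffices to characterize resolvability.
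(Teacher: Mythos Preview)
Your proposal is correct and follows essentially the same route as the paper's proof: both split into the cases $x\in T_{u}\cup T_{v}$ versus $x\in T_{u,v}$, use $d_{T^{3}}(x,y)=\lceil d_{T}(x,y)/3\rceil$, and in the $T_{u,v}$ case write $\min\{d_{T}(x,u),d_{T}(x,v)\}=3k+m$ and check the three residues $m=0,1,2$ to obtain the gap thresholds $D\geq 1,3,2$. Your presentation via the single identity $d_{T^{3}}(x,v)=k+\lceil (m+D)/3\rceil$ is slightly more compact than the paper's explicit subcase bookkeeping, but the argument is the same.
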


\begin{proof}
In $T^{3}$, a vertex $x\neq u,v$ resolves $u,v$ if and only if $d_{T^{3}}(x,u)\neq d_{T^{3}}(x,v)$. This imply $\lceil \dfrac{d_{T}(x,u)}{3}\rceil\neq \lceil \dfrac{d_{T}(x,v)}{3} \rceil$ and hence $d_{T}(x,u)\neq d_{T}(x,v)$. Now as $u,v$ are nonadjacent in $T^{3}$, we have $d_{T}(u,v)>3$. Consider the two cases below. 

\vspace{0.6em}

\noindent \textbf{Case I:} First we consider the case when $x$ is in $T_{u,v}$. Let $s$ be the intermediate vertex on the path $P=(u,u_{1},\hdots,s,\hdots,v_{1},v)$ connecting the unique path joining $x$ to $s$ in $T$.
Now $d_{T}(x,u)\neq d_{T}(x,v) \Longleftrightarrow d_{T}(s,u)\neq d_{T}(s,v)$. Without loss of generality we assume  min $\{d_{T}(s,u),d_{T}(s,v)\}=d_{T}(s,u)$. Then $d_{T}(x,u)=d_{T}(x,s)+d_{T}(s,u)$ and $d_{T}(x,v)=d_{T}(x,s)+d_{T}(s,v)$ and therefore 
min $\{d_{T}(x,u),d_{T}(x,v)\}=d_{T}(x,u)$. It is easy to note that $d_{T}(x,u)\geq 2$ always.

\vspace{0.6em}
 $a)$ If $d_{T}(x,u)\equiv 0 \hspace{0.2em} (\mbox{mod} \hspace{0.2em} 3)$, then $d_{T}(x,u)=3k$ for some positive integer $k$ and $d_{T^{3}}(x,u)=k$. Since $d_{T}(x,v)>d_{T}(x,u)$, $d_{T}(x,v)\geq 3k+1$, which implies $d_{T}(x,v)-d_{T}(x,u)\geq 1$. Hence we get $d_{T^{3}}(x,v)\geq \lceil \dfrac{3k+1}{3}\rceil =k+1>k= d_{T^{3}}(x,u)$. 

\vspace{0.5em}
 $b)$ If $d_{T}(x,u)\equiv 1 \hspace{0.2em} (\mbox{mod} \hspace{0.2em} 3)$, then $d_{T}(x,u)=3k+1$ for positive integer $k$ and $d_{T^{3}}(x,u)=\lceil\dfrac{3k+1}{3}\rceil=k+1$. Since $d_{T}(x,v)>d_{T}(x,u)$, we have $d_{T}(x,v)\geq 3k+2$. Now $d_{T^{3}}(x,v)\neq d_{T^{3}}(x,u) 
\Longleftrightarrow
\lceil \dfrac{d_{T}(x,v)}{3}\rceil \neq k+1$. This implies that $d_{T}(x,v)\neq 3k+2, 3k+3$ and hence $d_{T}(x,v)\geq 3k+4$. Therefore, $d_{T}(x,v)-d_{T}(x,u)\geq 3$.

\vspace{0.7em}
 $c)$ If $d_{T}(x,u)\equiv 2 \hspace{0.2em} (\mbox{mod} \hspace{0.2em} 3)$, then
$d_{T}(x,u)=3k+2$ for some integer $k\geq 0$ and $d_{T^{3}}(x,u)=\lceil\dfrac{3k+2}{3}\rceil=k+1$. Also, $d_{T}(x,v)>d_{T}(x,u)$ implies $d_{T}(x,v)\geq 3k+3$. Now $d_{T^{3}}(x,v)\neq d_{T^{3}}(x,u)
\Longleftrightarrow
\lceil \dfrac{d_{T}(x,v)}{3}\rceil \neq k+1$. This implies that $d_{T}(x,v)\neq 3k+3$ and hence $d_{T}(x,v)\geq 3k+4$. Therefore, $d_{T}(x,v)-d_{T}(x,u)\geq 2$.

\vspace{0.6em}

\noindent  If $x$ is an intermediate vertex of the $u-v$ path $P$, then considering $s=x$ the similar logic will follow. 

\vspace{0.6em}

\noindent \textbf{Case II:} Next, we consider the case when $x$ is either in $T_{u}$ or $T_{v}$. Without loss of generality, we assume that $x$ is in $T_{u}$. Then $d_{T}(x,v)=d_{T}(x,u)+d_{T}(u,v)>d_{T}(x,u)+3$ as $u,v$ are nonadjacent in $T^{3}$. Therefore $d_{T^{3}}(x,v)>d_{T^{3}}(x,u)+1$. Hence, any such $x$ can resolve $u,v$.
\end{proof}

\begin{cor}\label{four distance}
Let $T=(V,E)$ be a tree and $u,v$ be two nonadjcent vertices
in $T^{3}$ satisfying $4\leq d_{T}(u,v)\leq 5$. Then a vertex $x\neq u,v$ resolves $u,v$ in $T^{3}$ if and only if the following are true:

\begin{enumerate}
\item If $d_{T}(u,v)=4$. Then $x$ is either in $T_{u}\setminus\{u\}$ or $T_{v}\setminus\{v\}$ or it is one among any two consecutive vertices from any branch of $T$ attached to the intermediate vertex $u_{1}$ or $v_{1}$ of the path $(u,u_{1},w,v_{1},v)$ in $T$.

\item If $d_{T}(u,v)=5$. Then $x$ is either $T_{u}\setminus \{u\}$ or $T_{v}\setminus\{v\}$ or $x$ coincides with $u_{1}$ or $v_{1}$ or any vertex on a branch attached to them or it is one among any three consecutive vertices from any branch of $T$ attached to the intermediate vertices $w_{1}$ or $w_{2}$ of the path $(u,u_{1},w_{1}, w_{2}, v_{1},v)$ in $T$.
\end{enumerate}
\end{cor}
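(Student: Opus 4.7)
The plan is to apply Lemma \ref{4}, which already gives a complete characterization of resolvers for nonadjacent pairs in $T^3$, and translate its mod-$3$ conditions into the geometric descriptions stated here. Throughout, write $P$ for the unique $u$-$v$ path in $T$: in item (1), $P=(u,u_1,w,v_1,v)$, and in item (2), $P=(u,u_1,w_1,w_2,v_1,v)$. For any $x\in T_u\setminus\{u\}$ or $x\in T_v\setminus\{v\}$, the last bullet of Lemma \ref{4} says $x$ automatically resolves $u,v$, which disposes of the common clause in both items. It remains to determine which $x\in T_{u,v}$ resolve $u,v$.

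For $d_T(u,v)=4$, I would split $T_{u,v}$ into three subcases according to the vertex of $P$ that $x$ hangs off. If $x$ lies on a branch of $w$ (including $x=w$), then a direct calculation gives $d_T(x,u)=d_T(x,v)$, so $x$ fails the very first condition of Lemma \ref{4}. If $x\in\{u_1,v_1\}$, then $\min\{d_T(x,u),d_T(x,v)\}=1$ with difference $2$; since Lemma \ref{4} demands difference at least $3$ in residue class $1$, such an $x$ fails. If $x$ lies on a branch of $u_1$ at distance $k\geq 1$ (the $v_1$ case is symmetric), then $d_T(x,u)=k+1$, $d_T(x,v)=k+3$, so the minimum is $k+1$ and the difference is $2$, and Lemma \ref{4} yields a resolver iff $k+1\not\equiv 1\pmod 3$, i.e., iff $k\not\equiv 0\pmod 3$. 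Hence the non-resolvers on such a branch appear exactly at distances $3,6,9,\ldots$ from $u_1$, so among any two consecutive branch vertices at least one resolves, which is the phrasing in item (1).

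For $d_T(u,v)=5$, the same type of case analysis applies. If $x\in\{u_1,v_1\}$, then $\min=1$ with difference $3$, and the Lemma \ref{4} threshold of $3$ in residue $1$ is met, so $x$ resolves. If $x$ lies on a branch at $u_1$ or $v_1$ at distance $k\geq 1$, then $\min=k+1$ with difference $3$, and one verifies that the required difference bound is satisfied in all three residue classes, so every such $x$ resolves. If $x\in\{w_1,w_2\}$, then $\min=2$ with difference $1$, falling short of the bound $2$ in residue $2$, so $x$ fails. Finally, if $x$ lies on a branch at $w_1$ at distance $k\geq 1$ (symmetrically for $w_2$), then $d_T(x,u)=k+2$, $d_T(x,v)=k+3$, so $\min=k+2$ with difference $1$, and Lemma \ref{4} forces $k+2\equiv 0\pmod 3$, i.e., $k\equiv 1\pmod 3$. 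Hence resolvers sit at distances $1,4,7,\ldots$, i.e., one among any three consecutive branch vertices, as in item (2).

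The main obstacle is not conceptual but bookkeeping: keeping the three residue classes straight across the several branch-and-intermediate positions of $x$, and confirming that the resulting ``every one-out-of-two'' pattern (in the $d_T(u,v)=4$ case) and ``one-out-of-three'' pattern (in the $d_T(u,v)=5$ case) are exactly the translations of the mod-$3$ conditions produced by Lemma \ref{4}. Once the branch-offset distances $k+1$, $k+2$, $k+3$, $k+4$ have been tabulated for each position, each subcase collapses to a single modular check, and stitching them together gives the ``if and only if'' claimed in the corollary.
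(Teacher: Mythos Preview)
Your approach is essentially the same as the paper's: both apply Lemma \ref{4} and reduce to modular arithmetic on the distances from $x$ to $u$ and $v$ according to where $x$ sits relative to the $u$--$v$ path. The paper's own proof is in fact much terser than yours: it writes out only the subcase $d_T(u,v)=4$ with $x$ on a branch at $u_1$ (taking two consecutive branch vertices and observing one has $\min\equiv 0$ or $2\pmod 3$), then declares the $d_T(u,v)=5$ case ``analogous.'' Your version is more complete, explicitly handling the branches at $w$ (equidistant, hence non-resolving), the points $u_1,v_1,w_1,w_2$ themselves, and the full residue analysis on each branch, so you actually establish both directions of the biconditional rather than just the density statement the paper verifies.
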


\begin{proof}
Let $d_{T}(u,v)=4$. Without loss of generality, we assume $x,y$ to be two consecutive vertices on a branch $B$ attached to $u_{1}$. Then min$\{d_{T}(x,u),d_{T}(x,v)\}=d_{T}(x,u)$ and min$\{d_{T}(y,u),d_{T}(y,v)\}=d_{T}(y,u)$. Now $d_{T}(x,v)-d_{T}(x,u)=(d_{T}(x,u_{1})+d_{T}(u_{1},v))-(d_{T}(x,u_{1})+d_{T}(u_{1},u))=d_{T}(u_{1},v)-d_{T}(u_{1},u)=2$ as $d_{T}(u,v)=4$. Similarly, we get $d_{T}(y,v)-d_{T}(y,u)=2$. Since the vertices $x,y$ are consecutive along $B$, at least one among $d_{T}(x,u)$ or $d_{T}(y,u)$ takes a value from the set $\{0,2\}$ computed in mod $3$. Let $d_{T}(x,u)\equiv 0$ or $2$ (mod $3$). Then, by Lemma \ref{4}, $x$ resolves $u,v$ in $T^{3}$. Similar logic follows if $d_{T}(y,u)\equiv 0$ or $2$ (mod $3$).

\vspace{0.4em}

\noindent The proof of resolvability for the case $d_{T}(u,v)=5$ is analogous and can be verified using Lemma \ref{4}.
\end{proof}

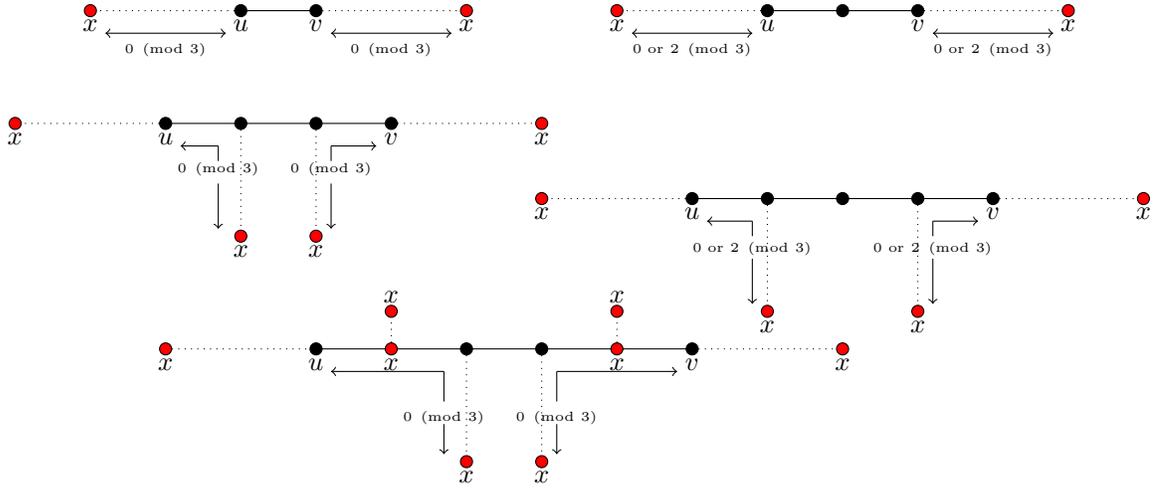
\begin{figure}
\begin{center}
\begin{tikzpicture}

\draw[dotted] (-8,0)--(-10,0); \draw (-8,0)--(-7,0);\draw[dotted] (-7,0)--(-5,0);
\draw[<->] (-9.8,-.3)--(-8.2,-.3); \draw[<->] (-6.8,-.3)--(-5.2,-.3);

\draw  [fill=black](-8,0) circle [radius=0.08]; \draw  [fill=red](-10,0) circle [radius=0.08]; \draw  [fill=black](-7,0) circle [radius=0.08]; \draw  [fill=red](-5,0) circle [radius=0.08]; 

\node [below] at (-8,0) {$u$}; \node [below] at (-7,0) {$v$}; \node [below] at (-5,0) {$x$}; \node [below] at (-10,0) {$x$};

\node [below, font=\tiny] at (-9,-.3) {$0 \pmod 3$}; \node [below, font=\tiny] at (-6,-.3) {$0 \pmod 3$};

\draw[dotted] (-1,0)--(-3,0); \draw (-1,0)--(1,0); \draw[dotted] (1,0)--(3,0); 
\draw[<->] (-2.8,-.3)--(-1.2,-.3); \draw[<->] (1.2,-.3)--(2.8,-.3);

\draw  [fill=black](-1,0) circle [radius=0.08]; \draw  [fill=black](0,0) circle [radius=0.08]; \draw  [fill=black](1,0) circle [radius=0.08]; \draw  [fill=red](-3,0) circle [radius=0.08]; \draw  [fill=red](3,0) circle [radius=0.08];

\node [below] at (-1,0) {$u$}; \node [below] at (1,0) {$v$}; \node [below] at (3,0) {$x$}; \node [below] at (-3,0) {$x$};

\node [below, font=\tiny] at (-2,-.3) {$0$ or $2 \pmod 3$}; \node [below, font=\tiny] at (2,-.3) {$0$ or $2 \pmod 3$};

\draw[dotted] (-9,-1.5)--(-11,-1.5); \draw (-9,-1.5)--(-6,-1.5);\draw[dotted] (-6,-1.5)--(-4,-1.5); \draw[dotted] (-8,-1.5)--(-8,-3);\draw[dotted] (-7,-1.5)--(-7,-3);

\draw[<-] (-8.8,-1.8)--(-8.3,-1.8); \draw (-8.3,-2)--(-8.3, -1.8);\draw[->] (-8.3,-2.3)--(-8.3,-2.9);
\draw[->] (-6.8,-1.8)--(-6.2,-1.8);\draw (-6.8,-1.8)--(-6.8,-2);\draw[->] (-6.8,-2.3)--(-6.8,-2.9);

\draw  [fill=black](-9,-1.5) circle [radius=0.08]; \draw  [fill=black](-8,-1.5) circle [radius=0.08];\draw  [fill=black](-7,-1.5) circle [radius=0.08];
\draw  [fill=black](-6,-1.5) circle [radius=0.08];\draw  [fill=red](-11,-1.5) circle [radius=0.08];\draw  [fill=red](-4,-1.5) circle [radius=0.08];
\draw  [fill=red](-8,-3) circle [radius=0.08];\draw  [fill=red](-7,-3) circle [radius=0.08];

\node [below] at (-9,-1.5) {$u$}; \node [below] at (-6,-1.5) {$v$};\node [below] at (-11,-1.5) {$x$};\node [below] at (-4,-1.5) {$x$}; \node [below] at (-8,-3) {$x$}; \node [below] at (-7,-3) {$x$}; 

\node [below, font=\tiny] at (-8.3,-1.9) {$0 \pmod 3$}; \node [below, font=\tiny] at (-6.8,-1.9) {$0 \pmod 3$};

\draw[dotted] (-2,-2.5)--(-4,-2.5); \draw (-2,-2.5)--(2,-2.5); \draw[dotted] (2,-2.5)--(4,-2.5);\draw[dotted] (-1,-2.5)--(-1,-4);\draw[dotted] (1,-2.5)--(1,-4);

\draw[<-] (-1.8,-2.8)--(-1.2,-2.8); \draw (-1.2,-2.8)--(-1.2, -3);\draw[->] (-1.2,-3.3)--(-1.2,-3.9);
\draw[->] (1.2,-2.8)--(1.8,-2.8);\draw (1.2,-2.8)--(1.2,-3);\draw[->] (1.2,-3.3)--(1.2,-3.9);

\draw  [fill=black](-2,-2.5) circle [radius=0.08]; \draw  [fill=black](-1,-2.5) circle [radius=0.08]; \draw  [fill=black](0,-2.5) circle [radius=0.08]; \draw  [fill=black](1,-2.5) circle [radius=0.08]; \draw  [fill=black](2,-2.5) circle [radius=0.08];
\draw  [fill=red](-4,-2.5) circle [radius=0.08]; \draw  [fill=red](4,-2.5) circle [radius=0.08];\draw  [fill=red](-1,-4) circle [radius=0.08];\draw  [fill=red](1,-4) circle [radius=0.08];

\node [below] at (-2,-2.5) {$u$}; \node [below] at (2,-2.5) {$v$}; \node [below] at (-4,-2.5) {$x$}; \node [below] at (4,-2.5) {$x$};

\node [below] at (-1,-4) {$x$}; \node [below] at (1,-4) {$x$};

\node [below, font=\tiny] at (-1.2,-2.95) {$0$ or $2 \pmod 3$}; \node [below, font=\tiny] at (1.2,-2.95) {$0$ or $2 \pmod 3$};


\draw[dotted] (-9,-4.5)--(-7,-4.5);\draw (-7,-4.5)--(-2,-4.5); \draw[dotted] (-2,-4.5)--(0,-4.5); \draw[dotted] (-5,-4.5)--(-5,-6); \draw[dotted] (-4,-4.5)--(-4,-6);

\draw[<-] (-6.8,-4.8)--(-5.3,-4.8);\draw (-5.3,-4.8)--(-5.3, -5.2);\draw[->] (-5.3,-5.5)--(-5.3,-5.9);

\draw[->] (-3.8,-4.8)--(-2.2,-4.8); \draw (-3.8,-4.8)--(-3.8,-5.2); \draw[->] (-3.8,-5.5)--(-3.8,-5.9);

\draw[dotted] (-6,-4.5)--(-6,-4);
\draw[dotted] (-3,-4.5)--(-3,-4);

\draw [fill=red](-6,-4) circle [radius=0.08]; 
\draw [fill=red](-3,-4) circle [radius=0.08];

\draw  [fill=black](-7,-4.5) circle [radius=0.08];\draw  [fill=red](-6,-4.5) circle [radius=0.08];\draw  [fill=black](-5,-4.5) circle [radius=0.08];
\draw  [fill=black](-4,-4.5) circle [radius=0.08]; \draw  [fill=red](-3,-4.5) circle [radius=0.08]; \draw  [fill=black](-2,-4.5) circle [radius=0.08];
\draw  [fill=red](0,-4.5) circle [radius=0.08]; \draw  [fill=red](-9,-4.5) circle [radius=0.08]; \draw  [fill=red](-5,-6) circle [radius=0.08];
\draw  [fill=red](-4,-6) circle [radius=0.08];

\node [below] at (-7,-4.5) {$u$}; \node [below] at (-2,-4.5) {$v$}; \node [below] at (-9,-4.5) {$x$}; \node [below] at (0,-4.5) {$x$}; \node [below] at (-5,-6) {$x$}; \node [below] at (-4,-6) {$x$};
\node [below] at (-6,-4.5){$x$};
\node [below] at (-3,-4.5){$x$};
\node[above] at (-6,-4){$x$};
\node[above] at (-3,-4){$x$};

\node [below, font=\tiny] at (-3.8,-5.2) {$0 \pmod 3$}; \node [below, font=\tiny] at (-5.3,-5.2) {$0 \pmod 3$};

\end{tikzpicture}
\caption{Resolvability conditions in $T^{3}$ depending on $d_{T}(u,v)$ (all possible positions of $x$ that resolves $u,v$ are depicted by red vertices)}\label{fig1}
\end{center}
\end{figure}
\section{Construction of optimal resolving sets in $T^{3}$}

\noindent In the following, we present the necessary and sufficient conditions for a vertex subset to become a resolving set for cube of trees.

\begin{thm}\label{neccsuff}
Let $T=(V,E)$ be a tree. The necessary and sufficient conditions for a set $S\subset V$
to be a resolving set of $T^{3}$ are

\begin{enumerate}

\item For every edge $uv\in E(T)$, $S$ contains at least one vertex $x$ which is at distance $0$ (mod $3$) from $u$ or $v$.

\item For every edge $uv\in E(T^{2})$, $S$ contains at least one vertex $x$ in $T_{u}$ or $T_{v}$ satisfying min$\{d_{T}(x,u),d_{T}(x,v)\}\equiv \hspace{0.2em} 0 \hspace{0.2em} \mbox{or} \hspace{0.2em}2$  $(\mbox{mod} \hspace{0.2em} 3)$


\item For every edge $uv\in E(T^{3})$, $S$ contains one vertex $x$ either in $T_{u}$ or $T_{v}$ such that $|d_{T}(x,u)-d_{T}(x,v)|=3$, otherwise min $\{d_{T}(x,u),d_{T}(x,v)\}\equiv 0$ (mod $3$).

\item For every pair of four distance vertices $u,v$, $S$ contains one vertex $x$ either in $T_{u}$ or $T_{v}$ such that $|d_{T}(x,u)-d_{T}(x,v)|=4$, otherwise min $\{d_{T}(x,u),d_{T}(x,v)\}\equiv 0$ or $2$ (mod $3$).

\item For every pair of five distance vertices $u,v$,  $S$ contains one vertex $x$ either in $T_{u}$ or $T_{v}$ such that $|d_{T}(x,u)-d_{T}(x,v)|=5 $, otherwise $|d_{T}(x,u)-d_{T}(x,v)|=3$ or min $\{d_{T}(x,u),d_{T}(x,v)\}\equiv 0$ (mod $3$).

\end{enumerate} 
\end{thm}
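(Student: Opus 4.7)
The plan is to prove the biconditional by splitting over each pair of distinct vertices $u, v \in V$ according to the value of $d_{T}(u,v)$. Since $d_{T^{3}}(u,v) = \lceil d_{T}(u,v)/3 \rceil$, the metric on $T^{3}$ is completely dictated by the distances in $T$, and Conditions $1$--$5$ are indexed by $d_{T}(u,v) \in \{1,2,3,4,5\}$; each encodes precisely the resolver characterization already worked out in the previous section.

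For necessity, I assume $S$ is a resolving set of $T^{3}$ and, for a generic pair $(u,v)$ with $d_{T}(u,v) = k \in \{1,2,3,4,5\}$, I choose a resolver $x \in S$. For $k \in \{1,2,3\}$ I apply Corollary~\ref{distance}, and for $k \in \{4,5\}$ I apply Corollary~\ref{four distance}, in order to translate the resolvability of $(u,v)$ by $x$ into an arithmetic constraint on $\min\{d_{T}(x,u), d_{T}(x,v)\} \bmod 3$ together with a component constraint ($x \in T_{u}$, $T_{v}$, or $T_{u,v}$). Reading off the resulting constraint for each $k$ produces Condition $k$ verbatim.

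For sufficiency, I verify that Conditions $1$--$5$ suffice to resolve every pair. If $d_{T}(u,v) \in \{1,2,3\}$ the pair is an edge of $T^{3}$ and the element $x \in S$ supplied by the corresponding condition meets the hypotheses of Lemma~\ref{3}, so $d_{T^{3}}(x,u) \neq d_{T^{3}}(x,v)$. If $d_{T}(u,v) \in \{4,5\}$ the pair is non-adjacent in $T^{3}$, and Conditions $4$ or $5$, combined with Lemma~\ref{4} refined by Corollary~\ref{four distance} for $x \in T_{u,v}$, again produce a resolver. For $d_{T}(u,v) \geq 6$ I would argue separately: any $x \in S$ lying in $T_{u} \cup T_{v}$ relative to $(u,v)$ already resolves the pair by Lemma~\ref{4} Case~II, since then $|d_{T}(x,v) - d_{T}(x,u)| = d_{T}(u,v) \geq 6$. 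To force such an $x$ to exist, I would apply Condition~$1$ to the first edge $u u_{1}$ of the $u$--$v$ path; if the resulting witness lies in the $u$-side component of $T \setminus \{u u_{1}\}$ we are done, and otherwise I appeal to Condition~$1$ applied to the last edge $v_{1} v$ and argue that at least one of the two witnesses must land in $T_{u} \cup T_{v}$, else both are trapped in $T_{u,v}$ at specific residues $\bmod\, 3$, a configuration ruled out by a direct ceiling computation.

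The main obstacle is precisely this last step: Conditions $1$--$5$ are local, pegged to short pairs, whereas pairs with $d_{T}(u,v) \geq 6$ require global propagation of these local conditions. The delicate sub-case is $d_{T}(u,v) \in \{6,7,8\}$, where the two relevant $T^{3}$-ceilings can still collide for a poorly placed witness; a careful bookkeeping of the residues prescribed by Conditions $1$--$3$ along the $u$--$v$ path should close the gap. Less serious, but still worth flagging, is ensuring that the ``otherwise'' branches of Conditions $3$--$5$ are correctly unpacked when $x \in T_{u,v}$: admissibility there depends jointly on $d_{T}(x,u) \bmod 3$ and on $d_{T}(x,v) - d_{T}(x,u)$, both as mandated by Lemma~\ref{4}.
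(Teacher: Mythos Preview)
Your treatment of necessity, and of sufficiency for $d_T(u,v)\le 5$, is fine and matches the paper. The gap is in the large-distance case $d_T(u,v)\ge 6$.

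Your plan there is to apply Condition~1 at the endpoint edges $uu_1$ and $v_1v$ and hope that one of the two witnesses lands in $T_u\cup T_v$. But Condition~1 at an endpoint edge gives no control over \emph{where} along the $u$--$v$ path the witness sits: nothing prevents both witnesses from lying in $T_{u,v}$, even equidistant from $u$ and $v$. Concretely, take $d_T(u,v)=6$ with midpoint $w$ and attach a branch of length $3$ at $w$; its far end $b$ has $d_T(b,u)=d_T(b,v)=6\equiv 0\pmod 3$, so $b$ is a legitimate Condition-1 witness for both $uu_1$ and $v_1v$, yet $b$ does not resolve $(u,v)$. Of course $S$ (satisfying all five conditions) will contain some \emph{other} element resolving $(u,v)$, but your argument does not locate it: no ``direct ceiling computation'' on the two endpoint witnesses alone can rule this configuration out. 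The same obstruction persists for every $d_T(u,v)\ge 6$, not just $\{6,7,8\}$.

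The paper's approach is to apply the conditions near the \emph{middle} of the $u$--$v$ path rather than at the ends. For even distance one takes $u_0,v_0$ symmetric about the midpoint $w$ with $d_T(w,u_0)=d_T(w,v_0)=1$ and invokes Condition~2 on the $T^2$-edge $u_0v_0$; the witness $x$ must then lie in $T_{u_0}$ or $T_{v_0}$, which forces $|d_T(x,u)-d_T(x,v)|\ge 4$ for Lemma~\ref{4} unless $x$ hangs precisely off $u_0$ (or $v_0$) with a bad residue. That residual sub-case is killed by escalating to Condition~4 on $u_1,v_1$ at distance $2$ from $w$, and if needed a further application of Condition~2 to a shifted pair. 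Odd distances $\ge 7$ are handled analogously, branching on $m\bmod 3$ (where $d_T(u,v)=2m+1$) and applying Condition~1, 3, or 5 to a suitable central pair. This middle-out strategy, with its layered fallback, is the idea missing from your proposal.
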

\begin{proof}
Let $S$ be a resolving set of $T^{3}$ and $x\in S$ resolves a pair of distinct vertices $u,v$. If $x\neq u,v$, then condition $1$, condition $2$, and condition $3$ hold from Lemma \ref{3}. Also, condition $4$ and condition $5$ follow from Corollary \ref{four distance}. By triviality, all the conditions hold if $x=u$ or $v$.

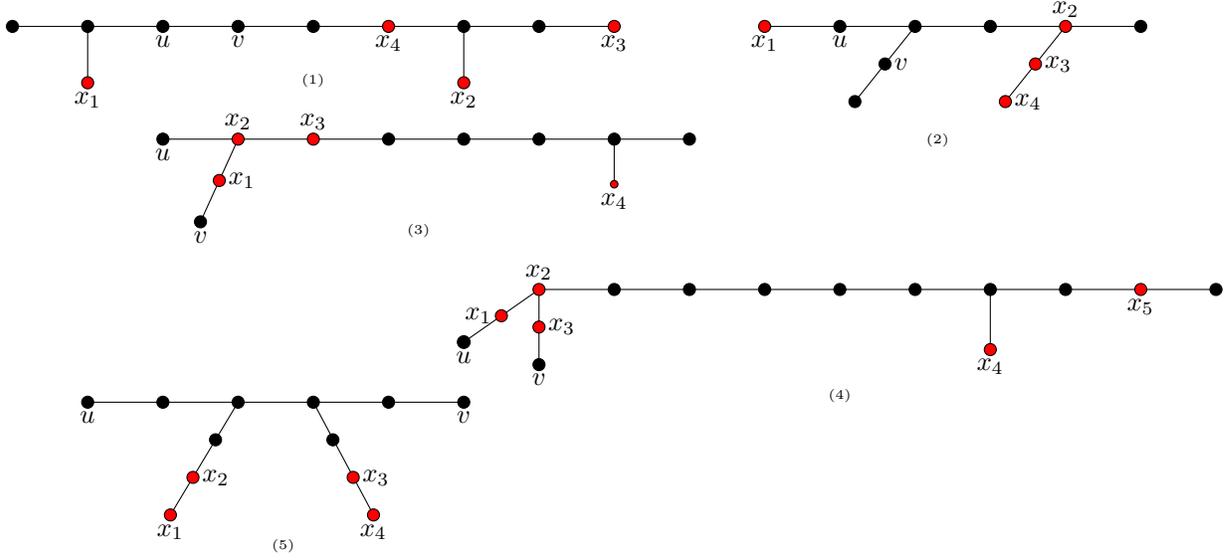
\begin{figure}
\begin{center}
\begin{tikzpicture}

\draw(-13,0)--(-5,0); \draw (-12,0)--(-12,-0.75); \draw (-7,0)--(-7,-0.75);

\draw  [fill=black](-13,0) circle [radius=0.08];\draw  [fill=black](-12,0) circle [radius=0.08];\draw  [fill=black](-11,0) circle [radius=0.08]; \draw  [fill=black](-10,0) circle [radius=0.08]; \draw  [fill=black](-9,0) circle [radius=0.08];\draw  [fill=red](-8,0) circle [radius=0.08]; \draw  [fill=black](-7,0) circle [radius=0.08]; \draw  [fill=black](-6,0) circle [radius=0.08]; \draw  [fill=red](-5,0) circle [radius=0.08]; \draw  [fill=red](-12,-0.75) circle [radius=0.08]; \draw  [fill=red](-7,-0.75) circle [radius=0.08];

\node [below] at (-11,0) {$u$}; \node [below] at (-10,0) {$v$}; \node [below] at (-12,-0.75) {$x_1$}; \node [below] at (-8,0) {$x_4$}; \node [below] at (-5,0) {$x_3$}; \node [below] at (-7,-0.75) {$x_2$};

\node [below, font=\tiny] at (-9,-.5) {$(1)$};


\draw (2,0)--(-3,0);\draw (-1,0)--(-1.8,-1); \draw (1,0)--(.2,-1);

\draw  [fill=red](-3,0) circle [radius=0.08]; \draw  [fill=black](-2,0) circle [radius=0.08];\draw  [fill=black](-1,0) circle [radius=0.08];\draw  [fill=black](0,0) circle [radius=0.08]; \draw  [fill=red](1,0) circle [radius=0.08];  \draw  [fill=black](2,0) circle [radius=0.08];
\draw  [fill=black](-1.8,-1) circle [radius=0.08]; \draw  [fill=black](-1.4,-.5) circle [radius=0.08]; \draw  [fill=red](.2,-1) circle [radius=0.08]; \draw  [fill=red](.6,-.5) circle [radius=0.08];

\node [below] at (-2,0) {$u$}; \node [right] at (-1.4,-.5) {$v$}; \node [below] at (-3,0) {$x_1$}; \node [above] at (1,0) {$x_2$}; \node [right] at (.6,-.5) {$x_3$};\node [right] at (.2,-1) {$x_4$};

\node [below, font=\tiny] at (-.7,-1.3) {$(2)$};


\draw(-4,-1.5)--(-11,-1.5); \draw (-10,-1.5)--(-10.5,-2.6);\draw (-5,-1.5)--(-5,-2.1);

\draw  [fill=black](-11,-1.5) circle [radius=0.08];\draw  [fill=red](-10,-1.5) circle [radius=0.08];  \draw  [fill=red](-9,-1.5) circle [radius=0.08]; \draw  [fill=black](-8,-1.5) circle [radius=0.08];\draw  [fill=black](-7,-1.5) circle [radius=0.08]; \draw  [fill=black](-6,-1.5) circle [radius=0.08];\draw  [fill=black](-5,-1.5) circle [radius=0.08];\draw  [fill=black](-4,-1.5) circle [radius=0.08]; \draw  [fill=black](-10.5,-2.6) circle [radius=0.08];\draw  [fill=red](-10.25,-2.05) circle [radius=0.08];\draw  [fill=red](-5,-2.1) circle [radius=0.05];

\node [below] at (-11,-1.5) {$u$}; \node [below] at (-10.5,-2.6) {$v$};\node [right] at (-10.25,-2.05) {$x_1$};\node [above] at (-10,-1.5) {$x_2$}; \node [above] at (-9,-1.5) {$x_3$}; \node [below] at (-5,-2.1) {$x_4$};

\node [below, font=\tiny] at (-7.6,-2.5) {$(3)$};


\draw (-6,-3.5)--(3,-3.5); \draw (-6,-3.5)--(-7,-4.2); \draw (-6,-3.5)--(-6,-4.5);\draw (0,-3.5)--(0,-4.3);

 \draw  [fill=red](-6,-3.5) circle [radius=0.08];\draw  [fill=black](-5,-3.5) circle [radius=0.08];\draw  [fill=black](-4,-3.5) circle [radius=0.08];\draw  [fill=black](-3,-3.5) circle [radius=0.08];\draw  [fill=black](-2,-3.5) circle [radius=0.08];\draw  [fill=black](-1,-3.5) circle [radius=0.08];\draw  [fill=black](0,-3.5) circle [radius=0.08]; \draw  [fill=black](1,-3.5) circle [radius=0.08]; \draw  [fill=red](2,-3.5) circle [radius=0.08]; \draw  [fill=black](3,-3.5) circle [radius=0.08];

\draw  [fill=black](-7,-4.2) circle [radius=0.085]; \draw  [fill=red](-6.5,-3.85) circle [radius=0.08]; \draw  [fill=black](-6,-4.5) circle [radius=0.08]; \draw  [fill=red](-6,-4) circle [radius=0.08]; \draw  [fill=red](0,-4.3) circle [radius=0.08]; 

\node [below] at (-7,-4.2) {$u$}; \node [below] at (-6,-4.5) {$v$}; \node [left] at (-6.5,-3.85) {$x_1$}; \node [above] at (-6,-3.5) {$x_2$};
\node [right] at (-6,-4) {$x_3$}; \node [below] at (0,-4.3) {$x_4$}; \node [below] at (2,-3.5) {$x_5$};

\node [below, font=\tiny] at (-2,-4.7) {$(4)$};



\draw(-12,-5)--(-7,-5);\draw (-10,-5)--(-10.9,-6.5); \draw (-9,-5)--(-8.2,-6.5); 

\draw [fill=black](-12,-5) circle [radius=0.08]; \draw [fill=black](-11,-5) circle [radius=0.08]; \draw [fill=black](-10,-5) circle [radius=0.08]; \draw [fill=black](-9,-5) circle [radius=0.08]; \draw [fill=black](-8,-5) circle [radius=0.08];  \draw [fill=black](-7,-5) circle [radius=0.08]; 

\draw [fill=red](-10.9,-6.5) circle [radius=0.08]; \draw [fill=red](-10.6,-6) circle [radius=0.08]; \draw [fill=black](-10.3,-5.5) circle [radius=0.08]; 
\draw [fill=red](-8.2,-6.5) circle [radius=0.08];\draw [fill=red](-8.47,-6) circle [radius=0.08]; \draw [fill=black](-8.74,-5.5) circle [radius=0.08]; 

\node [below] at (-12,-5) {$u$}; \node [below] at (-7,-5) {$v$}; \node [below] at (-10.9,-6.5) {$x_1$}; \node [right] at (-10.6,-6) {$x_2$}; \node [right] at (-8.47,-6) {$x_3$}; \node [below] at (-8.2,-6.5) {$x_4$};

\node [below, font=\tiny] at (-9.4,-6.7) {$(5)$};


\end{tikzpicture}
\caption{For the trees $T_{i}$, $1\leq i\leq 5$ and $S_{i}$ (set of all red vertices), all the conditions of Theorem \ref{neccsuff} hold true except condition $i$, which fails for the pair $u,v$ satisfying $d_{T}(u,v)=i$}\label{fig2}
\end{center}
\end{figure}
 
Conversely, let $u,v$ be any two arbitrary vertices of $T^{3}$. We consider the following cases depending on their adjacency in $T$ and prove the existence of a vertex $x\in S$ that resolves $u,v$ in each case \footnote{we omit the trivial case, i.e., when $x=u$ or $v$ from rest of the part of this proof}. (see Figure \ref{fig1} and Figure \ref{fig2})

\vspace{0.7em}
\noindent  \textbf{Case $1$:} Let $u$ and $v$ are adjacent in $T$.  From condition $1$, for each edge $uv$, there exists a vertex (say $x$) from $S$ such $d_{T}(x,u)\equiv 0$ (mod $3$) or $d_{T}(x,v)\equiv 0$ (mod $3$). Without loss of generality we assume $d_{T}(x,u)\equiv 0$ (mod $3$). Then we get min $\{d_{T}(x,u),d_{T}(x,v)\}=d_{T}(x,u)\equiv 0$ ( mod $3$) and hence using Lemma \ref{3}, $x$ resolves $u$ and $v$. 

\vspace{0.7em}
\noindent  \textbf{Case $2$:} Let $u$ and $v$ be nonadjacent in $T$. Consider the following cases according to $d_{T}(u,v)$ is even or odd.

\vspace{0.7em}
\noindent \textbf{Subcase (2a)}: $d_{T}(u,v)$ is even. Let $w$ be the middle vertex of $u-v$ path in $T$ satisfying $d_{T}(u,w)=d_{T}(w,v)$. 

\vspace{0.7em}
 Let $d_{T}(u,v)=2$. Then by condition $2$, one can able to find some $x\in S$ which is at distance $0$ or $2$ (mod $3$) from $uv\in E(T^{2})$. Therefore $x$ resolves $u,v$ follows from Lemma \ref{3}.

\vspace{0.4em}
Let $d_{T}(u,v)=4$. Then, from condition $4$ we get existence of some $x\in S$ such that $|d_{T}(x,u)-d_{T}(x,v)|=4$ or min $\{d_{T}(x,u),d_{T}(x,v)\}\equiv 0$ or $2$ (mod $3$). If $|d_{T}(x,u)-d_{T}(x,v)|=4$, $x$ is either in $T_{u}\setminus\{u\}$ or in $T_{v}\setminus\{v\}$. Hence, by Corollary \ref{four distance}, $x$ resolves $u,v$ in $T^{3}$. For the other case, $x$ must be attached to some branch at $s$ of the $u-v$ path satisfying min $\{d_{T}(x,u),d_{T}(x,v)\}\equiv 0$ or $2$ (mod $3$). Let $d_{T}(x,u)=\mbox{min} \{d_{T}(x,u),d_{T}(x,v)\}$.  Again $d_{T}(x,v)-d_{T}(x,u)=d_{T}(s,v)-d_{T}(s,u)=2$. Therefore, $x$ resolves $u,v$ in $T^{3}$ by Lemma \ref{4}.

\vspace{0.7em}
\noindent Next, we consider the case when $d_{T}(u,v)\geq 6$.
\vspace{0.37em}

Consider two vertices $u_{0},v_{0}$ that occur on either side of $w$ within the path $u-w, w-v$ respectively, satisfying $d_{T}(w,u_{0})=d_{T}(w,v_{0})=1$. Applying condition $2$ for the edge $u_{0}v_{0}\in E(T^{2})$ we get the existence of some $x\in S$. Without loss of generality, we assume $x\in T_{u_{0}}$.

\vspace{0.3em}
If $x$ occurs in the extended path of $u-u_{0}$, then by Lemma \ref{4}, $x$ resolves $u,v$ in $T^{3}$ as $d_{T}(x,v)-d_{T}(x,u)=d_{T}(u,v)\geq 6$.

\vspace{0.3em}

Next, we consider $x$ to be within the $u-u_{0}$ path or attached to some vertex $s$ of the $u-u_{0}$ path. Then $d_{T}(x,u_{0})=\mbox{min}\{d_{T}(x,u_{0}),d_{T}(x,v_{0})\}\equiv 0$ or $2$ (mod $3$). It is easy to note that 
$d_{T}(x,u)=\mbox{min}\{d_{T}(x,u),d_{T}(x,v)\}$.

\vspace{0.6em}

\noindent  a) Let $d_{T}(x,u_{0})\equiv 0 $ (mod $3$).

\vspace{0.7em}
Let $d_{T}(x,u)=d_{T}(x,s)+d_{T}(s,u)=d_{T}(x,u_{0})-d_{T}(s,u_{0})+d_{T}(s,u)=3k-d_{T}(s,u_{0})+d_{T}(s,u)$ for some integer $k\geq 0$. Then $d_{T}(x,v)=d_{T}(x,u_{0})+d_{T}(u_{0},w)+d_{T}(w,v)=3k+1+(d_{T}(u,s)+d_{T}(s,u_{0})+1)$ as $d_{T}(w,v)=d_{T}(u,w)$. Therefore, $d_{T}(x,v)-d_{T}(x,u)=2+2d_{T}(s,u_{0})\geq 4$ when $d_{T}(s,u_{0})\geq 1$. Hence, by Lemma \ref{4}, it follows that $x$ resolves $u,v$. If $d_{T}(s,u_{0})=0$, then also $x$ resolves $u,v$ if $d_{T}(x,u)\equiv 0$ or $2$ (mod $3$) as $d_{T}(x,v)-d_{T}(x,u)=2$.

\vspace{0.7em}
Hence the case remains when $d_{T}(s,u_{0})=0$ (i.e., $s=u_{0}$) and $d_{T}(x,u)\equiv 1$ (mod $3$). Since $d_{T}(x,u_{0})\equiv 0 $ (mod $3$), we have $d_{T}(u,u_{0})=d_{T}(v,v_{0})\equiv 1$ (mod $3$). Therefore, $d_{T}(u,v)=d_{T}(u,u_{0})+d_{T}(u_{0},v_{0})+d_{T}(v_{0},v)\equiv (1+2+1)$ (mod $3$) $\equiv 1$ (mod $3$). Since the distance between $u$ and $v$ is even, $d_{T}(u,v)\geq 10$. 

\vspace{0.7em}
Now consider two vertices $u_{1},v_{1}$ on either side of $w$ satisfying $d_{T}(w,u_{1})=d_{T}(w,v_{1})=2$. Then, applying condition $4$ on the four distance vertices $u_{1}, v_{1}$ we get the existence of some $y\in S$. Two cases may arise here.

\vspace{0.7em}
\noindent i) If $y$ lies in any extended branch of $u_{1}-u$, then by Lemma \ref{4}, it follows that $y$ resolves $u,v$ in $T^{3}$. Again, if $y$ is attached to some vertex $s$ of $u_{1}-u$ path or lies within the $u_{1}-u$ path (i.e., $y=s$) then $d_{T}(y,v)-d_{T}(y,u)=(d_{T}(y,u_{1})+d_{T}(u_{1},v_{1})+d_{T}(v_{1},v))-(d_{T}(y,u_{1})+d_{T}(u,u_{1})-2d_{T}(s,u_{1}))=2d_{T}(s,u_{1})+4\geq 4$ as $d_{T}(u,u_{1})=d_{T}(v,v_{1})$. Hence, applying Lemma \ref{4}, it is easy to conclude that $y$ resolves $u,v$ in $T^{3}$.

\vspace{0.4em}
Similar logic follows if $y$ is attached to some intermediate vertex of the $v_{1}-v$ path or lies within or in the extended path of $v_{1}-v$. 

\vspace{0.7em}
\noindent ii) If $y$ is attached to $u_{0}$ satisfying $d_{T}(y,u_{1})=\mbox{min}\{d_{T}(y,u_{1}),d_{T}(y,v_{1})\}\equiv 0$ or $2$ (mod $3$). Then $d_{T}(y,v)-d_{T}(y,u)=(d_{T}(y,u_{0})+d_{T}(u_{0},v_{1})+d_{T}(v_{1},v))-(d_{T}(y,u_{1})+d_{T}(u_{1},u))=(d_{T}(y,u_{1})-1)+3-d_{T}(y,u_{1})=2$ as $d_{T}(u,u_{1})=d_{T}(v,v_{1})$ and min $\{d_{T}(y,u),d_{T}(y,v)\}=d_{T}(y,u)=d_{T}(y,u_{1})+d_{T}(u_{1},u)\equiv 0$ or $2$ (mod $3$) as $d_{T}(u,u_{1})=d_{T}(u,u_{0})-d_{T}(u_{0},u_{1})\equiv 0$ (mod $3$). Therefore, by Lemma \ref{4} it follows that $y$ resolves $u,v$ in $T^{3}$. 

\vspace{0.4em}
Similarly, one can show that if $y$ is attached to $v_{0}$ satisfying $d_{T}(y,v_{1})=\mbox{min}\{d_{T}(y,v_{1}),d_{T}(y,u_{1})\}\equiv 0$ or $2$ (mod $3$), then $y$ resolves $u,v$.

\vspace{0.7em}
\noindent b) Let $d_{T}(x,u_{0})\equiv 2$ (mod $3$).  Then, analogous to the previous case, one can show that $d_{T}(x,v)-d_{T}(x,u)\geq 4$ if $d_{T}(s,u_{0})\geq 1$. Therefore, $x$ resolves $u,v$ by Lemma \ref{4}. Again, when $d_{T}(s,u_{0})=0$ then $d_{T}(x,v)-d_{T}(x,u)=2$, therefore, if $d_{T}(x,u)\equiv 0$ or $2$ (mod $3$) then $x$ resolves $u,v$.

\vspace{0.7em}
Hence, the only case remains when $s=u_{0}$ and $d_{T}(x,u)\equiv 1$ (mod $3$). Then $d_{T}(u,u_{0})=d_{T}(v,v_{0})\equiv 2$ (mod $3$) as $d_{T}(x,u_{0})\equiv 2$ (mod $3$). Hence $d_{T}(u,v)=d_{T}(u,u_{0})+d_{T}(u_{0},v_{0})+d_{T}(v_{0},v)\equiv 0$ (mod $3$). 

\vspace{0.7em}
Now consider two neighbours of $u_{0}$, one (say $p_{1}$) on the path $u-u_{0}$ and another (say $p_{2}$) on the path $u_{0}-x$. Then $d_{T}(p_{1},p_{2})=d_{T}(p_{1},u_{0})+d_{T}(u_{0},p_{2})=2$.
Clearly, $d_{T}(u,p_{1})\equiv 1$ (mod $3$). Applying condition $2$ on $p_{1}, p_{2}$, we get the existence of some $y\in S$.  Three cases may arise here. 

\vspace{0.7em}
\noindent i) When $y$ occurs in an extended path from $p_{2}$ then min
$\{d_{T}(y,p_{2}),d_{T}(y,p_{1})\}=d_{T}(y,p_{2})\equiv 0$ or $2$ (mod $3$).  Then $d_{T}(y,v)=d_{T}(y,p_{2})+d_{T}(p_{2},u_{0})+d_{T}(u_{0},v_{0})+d_{T}(v_{0},v)$ and $d_{T}(y,u)=d_{T}(y,p_{2})+d_{T}(p_{2},u_{0})+d_{T}(u_{0},u)$. Therefore $d_{T}(y,v)-d_{T}(y,u)=2$ and min $\{d_{T}(y,v),d_{T}(y,u)\}=d_{T}(y,u)=d_{T}(y,p_{2})+d_{T}(p_{2},p_{1})+d_{T}(p_{1},u)\equiv 0$ or $2$ (mod $3$). Hence, $y$ resolves $u,v$ in $T^{3}$ by Lemma \ref{4}.

\vspace{0.7em}
\noindent ii) If $y$ occurs in the intermediate path of $u-u_{0}$, then $d_{T}(y,v)=d_{T}(y,u_{0})+d_{T}(u_{0},v_{0})+d_{T}(v_{0},v)$ and $d_{T}(y,u)=d_{T}(u,u_{0})-d_{T}(y,u_{0})$. Therefore, $d_{T}(y,v)-d_{T}(y,u)=2d_{T}(y,u_{0})+2>3$ clearly. Hence, by Lemma \ref{4}, we can conclude that $y$ resolves $u,v$ in $T^{3}$.

\vspace{0.7em}
\noindent iii) If $y$ occurs in the extended path from $u-u_{0}$ then it also resolves $u,v$ in $T^{3}$ by Lemma \ref{4}.

\vspace{0.7em}
\noindent \textbf{Subcase (2b)}: $d_{T}(u,v)$ is odd. Let $d_{T}(u,v)=2m+1$ for some positive integer $m\geq 1$.

\vspace{0.7em}
When $m=1$, then $d_{T}(u,v)=3$. From condition $3$, either there exists a $x\in S$ such that $|d_{T}(x,v)-d_{T}(x,u)|=3$ (i.e., $x\in T_{u}$ or $T_{v}$) or min$\{d_{T}(x,u),d_{T}(x,v)\}\equiv 0$ (mod $3$) and hence from Lemma \ref{3} the result follows. Again when $m=2$, i.e., $d_{T}(u,v)=5$. Let $(u,u_{0},w_{1},w_{2}, v_{0}, v)$ be the path between $u,v$ in $T$. Then, from  condition $5$, there exists a $x\in S$  either coming from $T_u$ or $T_v$ satisfying $|d_{T}(x,v)-d_{T}(x,u)|=5$, otherwise $|d_{T}(x,v)-d_{T}(x,u)|=3$ or 
 min $\{d_{T}(x,u),d_{T}(x,v)\}=0$ (mod $3$). If $|d_{T}(x,v)-d_{T}(x,u)|=5$ or $3$, then by Lemma \ref{4}, $x$ resolves $u,v$ in $T^{3}$. In the other case, when min$\{d_{T}(x,u),d_{T}(x,v)\}=0$ (mod $3$) and $x$ is in the same component $T_{u,v}$ of $u,v$, it must be attached to the vertex $w_{1}$ or $w_{2}$ satisfying $d_{T}(x,v)-d_{T}(x,u)=1$. Hence, by Lemma \ref{4}, $x$ resolves $u,v$ in $T^{3}$ .

\vspace{0.7em}
Next, we consider the case when $m\geq 3$, i.e., $d_{T}(u,v)\geq 7$. We consider  the $u-v$ path as $(u,\hdots, u_{1},u_{0},w_{1},w_{2},v_{0},\\
v_{1},\hdots, v)$ where $d_{T}(u,w_{1})=d_{T}(v,w_{2})=m, d_{T}(u_{1},v_{1})=5$.

\vspace{0.7em}
\noindent a) Let $m\equiv 0$ (mod $3$).
\vspace{0.3em}

Applying condition $1$ for the edge $w_{1}w_{2}$, we get the existence of a vertex $x\in S$. Without loss of generality, we assume $x\in T_{w_{1}}$. Then min$\{d_{T}(x,w_{1}),d_{T}(x,w_{2})\}=d_{T}(x,w_{1})\equiv 0$ (mod $3$). 
 
\vspace{0.3em}
If $x$ occurs in the extended path of $u-w_{1}$, then $d_{T}(x,v)-d_{T}(x,u)=d_{T}(u,v)\geq 7$. Again, if $x$ occurs in a branch attached to some vertex $s$ within the path $u-w_{1}$, then $d_{T}(x,v)=d_{T}(x,s)+d_{T}(s,w_{1})+d_{T}(w_{1},w_{2})+d_{T}(w_{2},v)$ and $d_{T}(x,u)=d_{T}(x,s)+d_{T}(s,u)=d_{T}(x,s)+d_{T}(u,w_{1})-d_{T}(s,w_{1})$. 
Therefore, $d_{T}(x,v)-d_{T}(x,u)=2d_{T}(s,w_{1})+1\geq 3$ when $d_{T}(s,w_{1})\geq 1$. Again, if $d_{T}(s,w_{1})=0$, i.e., when $s=w_{1}$,  we get $d_{T}(x,u)=d_{T}(x,w_{1})+d_{T}(w_{1},u)\equiv 0$ (mod $3$) and $d_{T}(x,v)-d_{T}(x,u)=1$. Therefore, by Lemma \ref{4}, $x$ resolves $u,v$ in $T^{3}$ for the above cases.

\vspace{0.7em}
\noindent  b) Let $m\equiv 2$ (mod $3$).
\vspace{0.3em}

It is easy to note that $d_{T}(u,u_{1})\equiv 0$ (mod $3$) in this case. Applying condition $5$ to the vertices $u_{1},v_{1}$, we get to know the existence of a $x\in S$. Without loss of generality, we assume $x\in T_{w_{1}}$. Then $d_{T}(x,u_{1})=\text{min}\{d_{T}(x,u_{1}),d_{T}(x,v_{1})\}$ and hence $d_{T}(x,u)=\text{min}\{d_{T}(x,u),d_{T}(x,v)\}$. 


\vspace{0.3em}
When $|d_{T}(x,u_{1})-d_{T}(x,v_{1})|=5$, then $x$ is in the extended path of $u_{1}-w_{1}$. Then $|d_{T}(x,v)-d_{T}(x,u)|=5\geq3$. If $|d_{T}(x,u_{1})-d_{T}(x,v_{1})|=3$, then $x=u_{0}$ or $x$ is on a branch attached to $u_{0}$ as $x\in T_{w_{1}}$. Therefore $d_{T}(x,v)-d_{T}(x,u)=3$. 
Again, if $x$ is attached to $w_{1}$ satisfying $d_{T}(x,u_{1})\equiv 0$ (mod $3$). Then $d_{T}(x,v)= d_{T}(x,w_{1})+d_{T}(w_{1},w_{2})+d_{T}(w_{2},v)$, $d_{T}(x,u)=d_{T}(x,w_{1})+d_{T}(w_{1},u)$ and therefore $d_{T}(x,v)-d_{T}(x,u)=1$. Moreover, $d_{T}(x,u)=d_{T}(x,u_{1})+d_{T}(u_{1},u)\equiv 0$ (mod $3$) in this situation. Hence, by Lemma \ref{4}, $x$ resolves $u,v$ in $T^{3}$.

\vspace{0.6em}
\noindent c) Let $m\equiv 1$ (mod $3$).

\vspace{0.3em}
In this case, $d_{T}(u,u_{0})\equiv 0$ (mod $3$). Since $d_{T}(u_{0},v_{0})=3$, applying condition $3$ on the edge $u_{0}v_{0}$ we get existence of some $x\in S$. Without loss of generality, we assume $x\in T_{w_{1}}$. Therefore $d_{T}(x,u)=\text{min}\{d_{T}(x,u),d_{T}(x,v)\}$. Now if $x$ occurs in the extended path of $u-u_{0}$ or attached to some intermediate vertex of the path $u-u_{0}$, then $|d_{T}(x,v)-d_{T}(x,u)|\geq 3$, therefore by Lemma \ref{4}, $x$ resolves $u,v$ in $T^{3}$.

\vspace{0.3em}
\noindent Therefore, the case remains when $x$ is attached to a branch at $w_{1}$ satisfying min $\{d_{T}(x,u_{0}),d_{T}(x,v_{0})\}=d_{T}(x,u_{0})\equiv 0$ (mod $3$). In this case, we have $d_{T}(x,v)-d_{T}(x,u)= (d_{T}(x,w_{1})+d_{T}(w_{1},w_{2})+d_{T}(w_{2},v))-(d_{T}(x,w_{1})+d_{T}(w_{1},u))=1$ as $d_{T}(w_{2},v)=d_{T}(w_{1},u)$. Furthermore, we get $d_{T}(x,u)=\mbox{min}\{d_{T}(x,u),d_{T}(x,v)\}=d_{T}(x,u_{0})+d_{T}(u_{0},u)\equiv 0$ (mod $3$). Hence, by Lemma \ref{4}, it follows that $x$ resolves $u,v$ in $T^{3}$.  
\vspace{0.7em}

\noindent Thus, we prove that $S$ is a resolving set of $T^{3}$.
\end{proof}

\section{Lower bound for metric dimension of $T^{3}$} 

\noindent In this section, we determine the lower bound of $\beta{(T^{3})}$ for a given tree $T$.

\begin{lem}\label{7}
Let $T=(V,E)$ be a tree, and $v_{0}$ be a major stem of $T$ containing $n_{0}$ legs. Then any metric basis of $T^{3}$ must contain $n_{0}+m_{0}-2$ number of vertices from the legs of $v_{0}$, where $m_{0}\geq 1$ be the number of midlegs attached to $v_{0}$.  
\end{lem}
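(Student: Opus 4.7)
For each midleg $L_{i}$ of $v_{0}$, let $x_{i}$ be its middle vertex and $y_{i}$ its leaf, and for a metric basis $S$ put $A=\{i:y_{i}\in S\}$, $B=\{i:x_{i}\in S\}$. The approach rests on two pair-forcing claims. (a)~\textbf{Leaf claim:} the only vertices that resolve $y_{i},y_{j}$ in $T^{3}$ are $y_{i}$ and $y_{j}$ themselves. This follows from Lemma \ref{4}: since $d_{T}(y_{i},y_{j})=4$ the pair is non-adjacent in $T^{3}$; both $T_{y_{i}}\setminus\{y_{i}\}$ and $T_{y_{j}}\setminus\{y_{j}\}$ are empty, and any $x\in T_{y_{i},y_{j}}$ either lies outside $L_{i}\cup L_{j}$ (forcing $d_{T}(x,y_{i})=d_{T}(x,y_{j})$) or equals $x_{i}$ or $x_{j}$ (giving $\min=1$ but $|d_{T}(x,y_{i})-d_{T}(x,y_{j})|=2<3$, which violates the residue-$1$ case of Lemma \ref{4}). (b)~\textbf{Mid-vertex claim:} the only resolvers of $x_{i},x_{j}$ are $x_{i}$ and $x_{j}$; by Lemma \ref{3} with $d_{T}(x_{i},x_{j})=2$, the only candidate $y_{i}\in T_{x_{i}}\setminus\{x_{i}\}$ gives $\min\{1,3\}=1\not\equiv 0,2\pmod{3}$, and symmetrically for $y_{j}$. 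These two claims yield $|A|\geq m_{0}-1$ and $|B|\geq m_{0}-1$.

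Next I would case-split using Corollary \ref{30}, which permits at most one leg $L$ of $v_{0}$ with $S\cap L=\emptyset$. \textbf{Case (i):} either no exception exists, or the exception is a midleg. Then every non-midleg leg contributes at least one vertex of $S$ (so at least $n_{0}-m_{0}$ vertices), while the midlegs contribute $|A|+|B|\geq 2(m_{0}-1)$ vertices, for a total of at least $n_{0}+m_{0}-2$. \textbf{Case (ii):} the exception is a non-midleg $L_{j}$ with vertices $w_{1},w_{2},\ldots,w_{k}$ (listed in order away from $v_{0}$), where $k=1$ (pendant) or $k\geq 3$ (long leg). Since $L_{j}\cap S=\emptyset$, no $w_{a}$ lies in $S$. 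For each midleg $L_{i}$ I would apply Lemma \ref{3} to the pair $(x_{i},w_{1})$, which is at $T$-distance $2$: any admissible resolver must lie in $T_{x_{i}}\cup T_{w_{1}}$ with $\min\equiv 0,2\pmod{3}$, but the sole vertex of $T_{x_{i}}\setminus\{x_{i}\}$ is $y_{i}$, giving $\min=1$, while the vertices of $T_{w_{1}}\setminus\{w_{1}\}$ are exactly $w_{2},\ldots,w_{k}$, all outside $S$. This forces $x_{i}\in S$ for every $i$, so $|B|=m_{0}$. Combined with $|A|\geq m_{0}-1$ and the $n_{0}-m_{0}-1$ remaining non-midlegs each contributing at least one vertex, the leg count is at least $m_{0}+(m_{0}-1)+(n_{0}-m_{0}-1)=n_{0}+m_{0}-2$.

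The crux is Case (ii): Corollary \ref{30} together with the two pair-forcing claims alone would only yield $n_{0}+m_{0}-3$, and the missing unit is recovered precisely by the observation that excluding an entire non-midleg leg from $S$ forces every midleg mid-vertex into $S$ through the pair $(x_{i},w_{1})$. The main technical work is verifying via Lemma \ref{3} that no vertex outside $L_{j}\cup\{x_{i},w_{1}\}$ can resolve $(x_{i},w_{1})$, handling the pendant case $k=1$ and the long-leg case $k\geq 3$ uniformly.
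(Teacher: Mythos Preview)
Your argument is correct and rests on the same pair-forcing idea as the paper: equidistant leg vertices $u,v$ at distance $1$ or $2$ from $v_{0}$ can only be resolved by $u$ or $v$ themselves, which forces all but one at each level into $S$. The organisation differs, though. The paper partitions the legs into \emph{short} (pendants $+$ midlegs) versus \emph{long}: it first counts $p_{0}+2m_{0}-2$ forced short-leg vertices, then invokes minimality of $S$ to claim a left-out distance-$1$ short-leg vertex $a_{0}$ exists, and pairs $a_{0}$ against the first vertex of each long leg to force $l_{0}$ more contributions. You instead partition into \emph{midlegs} versus \emph{non-midlegs} and case-split on the identity of the Corollary~\ref{30} exception; in your Case~(ii) you run the same distance-$2$ pair $(x_{i},w_{1})$ but draw the conclusion on the midleg side ($x_{i}\in S$, hence $|B|=m_{0}$) rather than on the long-leg side. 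This is the dual of the paper's manoeuvre. Your version has the advantage of never appealing to minimality of $S$---it establishes the bound for every resolving set, whereas the paper's phrasing ``since $|S|$ is minimum \ldots\ there will always be a pair $\{a_{0},b_{0}\}$ left aside'' leaves an implicit case (all distance-$1$ short-leg vertices already in $S$) that you handle explicitly.
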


\begin{proof}
Let $S$ be an arbitrary metric basis of $T^{3}$.  Let $B(v_{0})$ be the set of all leg vertices\footnote{vertices that are along the legs attached to some common major stem} corresponding to $v_{0}$ in $T$ and $n_{0}=p_{0}+m_{0}+l_{0}$ where $p_{0},m_{0},l_{0}$ denote the number of pendants, midlegs, and long legs, respectively.
\vspace{0.4em}

Consider any arbitrary pair of vertices $\{u,v\}\subset B(v_{0})$ satisfying $d_{T}(v_{0},u)=d_{T}(v_{0},v)$. Now if both $u,v$ are on short legs, then $d_{T}(u,v)=2$ or $4$. One can verify from Theorem \ref{neccsuff} that no vertex $w\neq u,v$ can resolve them in $T^{3}$. Therefore, it is necessary to include at least $p_{0}+2m_{0}-2$ vertices in $S$ when $m_{0}\geq 1$. Since $|S|$ is minimum in comparison to any resolving set of $T^{3}$, there will always be a pair of vertices $\{a_{0},b_{0}\}\subset B(v_{0})$ satisfying $d_{T}(v_{0},a_{0})=1, d_{T}(v_{0},b_{0})=2$, left aside from vertex selection while constructing $S$ coming from short legs when $m_{0}\geq 1$. 
  
\vspace{0.4em}
 Consider a long leg $L$ attached to $v_{0}$ and $\{x,y\}\subset B(v_{0})$ be the pair of vertices on $L$ satisfying $d_{T}(v_{0},x)=1$ and $d_{T}(v_{0},y)=2$ respectively.  We consider the pair of vertices $\{a_{0},x\},\{b_{0},y\}$. Clearly, $d_{T}(x,a_{0})=2$ and $d_{T}(y,b_{0})=4$. Now to resolve any of the above pairs and keep $|S|$ to be minimum, it is necessary to include one vertex $z$ from $L$ satisfying $d_{T}(x,z)\equiv 0$ or $2$ (mod $3$) by Theorem \ref{neccsuff}. It can be noted that any $z\neq x,y$ on $L$ satisfying $d_{T}(v_{0},z)\equiv 0$ or $1$ (mod $3$) will work. Since there are $l_{0}$ long legs attached to $v_{0}$, applying similar logic, it is necessary to include $l_{0}$ vertices in $S$ from each of the long legs. Hence, the total number of vertex insertions necessary for constructing any metric basis $S$ of $T^{3}$ is $p_{0}+2m_{0}-2+l_{0}=(p_{0}+m_{0}+l_{0}-1)+(m_{0}-1)=(n_{0}-1)+(m_{0}-1)=n_{0}+m_{0}-2$.
 \end{proof}

 \begin{thm}\label{lowerbound}
Let $T=(V,E)$ be a tree. Then \[\beta(T^{3})\geq\beta(T)+\sum_{i=1,m_{i}\geq 1}^{l}{m_{i}}-l\] where $l$ is the total number of major stems of $T$ containing at least one mid leg and $m_{i}$ denotes the number of mid legs attached to the major stem $v_{i}$, $1\leq i\leq l$.  
\end{thm}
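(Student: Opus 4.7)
The plan is to decompose any metric basis $S$ of $T^3$ according to which major stem of $T$ its leg-vertices are associated with, bound the contribution from each major stem separately, and then sum. The key facts used are Observation 2 (legs attached to distinct stems are vertex-disjoint), Corollary 6 ($\beta(T) = \sum_{v \in V'}(l_v - 1)$ summed over major stems), Corollary 30 (at least $l_v - 1$ vertices of $S$ lie on legs of any major stem $v$), and Lemma 7 (which sharpens this to $l_v + m_v - 2$ when $m_v \geq 1$).

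Let $V' = \{v_1, \ldots, v_k\}$ enumerate the major stems of $T$, and write $n_i = l_{v_i}$ for the number of legs at $v_i$ and $m_i$ for the number of mid legs at $v_i$. Reindex so that $v_1, \ldots, v_l$ are precisely the major stems with $m_i \geq 1$ (and $v_{l+1}, \ldots, v_k$ have $m_i = 0$). For each $i$, let $B(v_i)$ denote the set of leg-vertices attached to $v_i$ (not including $v_i$ itself). By Observation 2 the sets $B(v_1), \ldots, B(v_k)$ are pairwise disjoint, hence
\[
|S| \;\geq\; \sum_{i=1}^{k} |S \cap B(v_i)|.
\]

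Now I would bound each summand. For $i \leq l$ (major stems having at least one mid leg), Lemma 7 gives directly $|S \cap B(v_i)| \geq n_i + m_i - 2$. For $i > l$ (no mid legs), Corollary 30 gives $|S \cap B(v_i)| \geq n_i - 1$. Plugging these into the display above and regrouping,
\[
|S| \;\geq\; \sum_{i=1}^{l}(n_i + m_i - 2) + \sum_{i=l+1}^{k}(n_i - 1) \;=\; \sum_{i=1}^{k}(n_i - 1) + \sum_{i=1}^{l}(m_i - 1).
\]
By Corollary 6, the first sum on the right equals $\beta(T)$, while the second sum is exactly $\sum_{i=1, m_i \geq 1}^{l} m_i - l$. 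Taking $S$ to be a metric basis of $T^3$ so $|S| = \beta(T^3)$ yields the claimed inequality.

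The only nontrivial ingredient is Lemma 7, which is already proven; the rest is a disjointness/accounting argument, so no genuine obstacle arises. The one thing to state carefully is the disjointness of the $B(v_i)$: two legs sharing no common stem are vertex-disjoint (Observation 2 part (ii)), and legs at the same stem share only the stem itself (part (i)), which is excluded from $B(v_i)$. Thus the contributions from different major stems may indeed be added without double-counting.
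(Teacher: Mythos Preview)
Your proof is correct and follows essentially the same approach as the paper: bound the number of $S$-vertices in the legs of each major stem separately (via Lemma~\ref{7} when $m_i\geq 1$ and Corollary~\ref{30} when $m_i=0$), add these contributions, and identify the resulting sum with $\beta(T)+\sum_{i}(m_i-1)$ using Corollary~\ref{6}. The only difference is that you make the disjointness of the sets $B(v_i)$ (via Observation~2) explicit, whereas the paper tacitly assumes the contributions from different major stems can be summed without overlap; your version is cleaner on that point.
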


\begin{proof}
Let $S$ be a resolving set of $T^{3}$. Then, by Lemma \ref{1}, it is also a resolving set of $T$. Let $V^{\prime}$ be the set of all major stems of $T$ and each $v_{i}\in V^{\prime}$ contains $n_{i}$ legs, $1\leq i\leq |V^{\prime}|$. From Lemma \ref{7}, we get to know that while constructing any metric basis of $T^{3}$, we necessarily need to insert $n_{i}+m_{i}-2$ number of vertices from the legs of $v_{i}$ where $m_{i}\geq 1$ and the number is $n_{i}-1$ for the remaining major stems (where $m_{i}=0$) from Corollary \ref{30}. Therefore, $|S|\geq \sum\limits_{\substack{i=1}}^{l}(n_{i}+m_{i}-2)+\sum\limits_{\substack{j=1}}^{|V^{\prime}|-l} (n_{j}-1)$. This holds for every resolving set $S$ of $T^{3}$, hence we get $\beta(T^{3})\geq \sum\limits_{\substack{i=1}}^{|V^{\prime}|}(n_{i}-1)+\sum\limits_{\substack{i=1}}^{l} (m_{i}-1)=\beta{(T)}+\sum\limits_{i=1,m_{i}\geq 1}^{l}{m_{i}}-l$ using Corollary \ref{6}.
\end{proof}

\section{Upper Bound for metric dimension of $T^{3}$}  
\noindent In the following theorem, we determine the upper bound of $\beta(T^{3})$ for a given tree $T$.

\begin{thm}\label{upperbound}
Let $T=(V, E)$ be a tree. Then \[\beta{(T^{3})}\leq \beta{(T)}+\sum_{i=1,m_{i}\geq 2}^{l} (m_{i}-1)+M+1-l\] where $M$ is the total number of major stems and $l$ is the number of major stems containing at least two mid legs, and $m_{i}$ denotes the number of mid legs attached to the major stem $v_{i}$ where $1\leq i\leq l$.
\end{thm}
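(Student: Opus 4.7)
The plan is to build an explicit resolving set $S \subseteq V$ for $T^3$ whose cardinality does not exceed the claimed bound, and then invoke the necessary-and-sufficient criterion in Theorem \ref{neccsuff} to verify that $S$ resolves $T^3$. Writing $\beta(T) = \sum_i (n_i - 1)$ via Corollary \ref{6}, the target count $\beta(T) + \sum_{i: m_i \geq 2}(m_i-1) + M + 1 - l$ splits naturally into a per-major-stem contribution $\beta(T) + \sum_{i: m_i \geq 2}(m_i-1)$ plus $M + 1 - l$ auxiliary vertices, and the construction is organised around this decomposition.

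First I would enumerate the major stems as $v_1, \ldots, v_M$, recording at each $v_i$ the numbers $p_i$ of pendants, $m_i$ of mid legs and $l_i$ of long legs (so $n_i = p_i + m_i + l_i$). At each $v_i$ I include in $S$ a carefully chosen family of leg vertices: when $m_i \geq 2$, both vertices of every mid leg, the (sole) vertex of every pendant, and one vertex of each long leg placed at distance $\equiv 1 \pmod 3$ from $v_i$, with one chosen leg omitted; when $m_i \in \{0, 1\}$, a single well-chosen vertex per leg, again omitting one leg. A direct count shows this yields $n_i - 1$ leg vertices when $m_i \leq 1$ and $n_i + m_i - 2$ leg vertices when $m_i \geq 2$, matching the forced count of Lemma \ref{7} and contributing exactly $\beta(T) + \sum_{i: m_i \geq 2}(m_i - 1)$ in total. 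Next I would add at most $M + 1 - l$ auxiliary vertices, one near each major stem with $m_i \leq 1$ (placed on the omitted leg at a carefully chosen residue modulo $3$), plus one extra global vertex located on a central path between the extreme major stems to absorb the last unresolved configuration. By construction, $|S|$ is at most the announced upper bound.

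The verification step checks that $S$ meets each of the five conditions of Theorem \ref{neccsuff}. Conditions $1$ and $2$ (on edges of $T$ and $T^2$) follow from the per-stem selection: inside a leg, the chosen vertices are deliberately placed at distances $\equiv 0 \pmod 3$ from one endpoint of each intra-leg edge, while across legs they interact via the stem $v_i$. Conditions $3$, $4$, and $5$ (on pairs of $T$-distance $3$, $4$, $5$) reduce to analysing pairs $u, v$ whose path in $T$ either lies inside one branch at $v_i$, crosses a single stem, or jumps to a neighbouring major stem; in each case either a leg-vertex at the stem or one of the auxiliary vertices supplies the required difference of $3$, $4$ or $5$, or the required congruence, via Lemma \ref{3} and Corollary \ref{four distance}.

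The main technical obstacle is the bookkeeping for conditions $3$ and $5$ on pairs $u, v$ whose unique $T$-path crosses several major stems: one must ensure that some already-selected vertex lies in the correct branch relative to $u$ and $v$ at the correct residue modulo $3$. This is precisely where the slack of $M + 1 - l$ is needed, and the delicate part is to pick the auxiliary vertices \emph{simultaneously} so that every cross-stem pair is resolved without allowing the count to inflate. A careful case analysis on the parities and residues modulo $3$ of the inter-stem distances, mirroring the subcase structure of the proof of Theorem \ref{neccsuff}, should suffice to discharge these cases and complete the argument.
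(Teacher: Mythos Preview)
Your overall strategy --- build an explicit $S$ of the claimed size, split into a per-stem contribution of $\beta(T)+\sum_{m_i\ge 2}(m_i-1)$ plus $M+1-l$ auxiliaries, then verify resolvability --- matches the paper exactly. The difference is in \emph{where} the auxiliary vertices go and, correspondingly, how the verification is carried out.

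You place each auxiliary on the \emph{omitted} leg of a stem with $m_i\le 1$, at a residue to be chosen, and a final global vertex on a central path; you then plan to discharge the five clauses of Theorem~\ref{neccsuff} by residue bookkeeping. The paper does something structurally different: at every major stem with $m_i\le 1$ it adds one vertex so that \emph{two consecutive} vertices of $B[v_i]$ lie in $S$ (either $y_i,z_i$ on a long leg already contributing $z_i$, or the stem $v_j$ itself adjacent to an already-chosen pendant), and at one distinguished stem it spends two auxiliaries to get \emph{three consecutive} vertices (either $x_k,y_k,z_k$ on a long leg, or the stem together with two midleg vertices, or the stem plus a non-leg neighbour). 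The entire verification then collapses to a single sentence: for any pair $u,v$ with $d_T(u,v)\le 5$, some major stem lies in $T_u$, $T_v$ or $T_{u,v}$, and the ``any two (resp.\ three) consecutive vertices suffice'' clauses of Corollary~\ref{distance} and Corollary~\ref{four distance} immediately supply a resolver. No residue-by-residue case analysis is needed.

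Your version has a concrete obstruction: when $m_i\le 1$ and the omitted leg at $v_i$ is a pendant, that leg carries exactly one vertex, so you cannot ``choose a residue'' there, and placing that pendant in $S$ simply un-omits the leg without giving you the positional control you want. More generally, residue-based placement on omitted legs forces you into exactly the cross-stem case analysis you flag as the main obstacle, whereas the paper's consecutive-vertex trick sidesteps it entirely. If you rework the auxiliary placement to produce consecutive runs near each stem (and a triple run at one of them), the verification becomes essentially immediate.
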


\begin{proof}
\noindent Let $V^{\prime}$ be the set of all major stems 
of $T$ and hence $|V^{\prime}|=M$. Let $p_{v}, m_{v}, l_{v}$ denote the number of pendants, midlegs, and long legs attached to an arbitrary major stem $v\in V^{\prime}$ and $B(v)$ be the set of all leg vertices corresponding to $v$ in $T$. We denote $B[v]=B(v)\cup \{v\}$. Now, depending on the number of different types of legs attached to each major stem, we build a resolving set $S$ for $T^{3}$ in the following way:

\vspace{0.7em}
\noindent \textbf{Construction of $S$:}
\vspace{0.4em}

\noindent $1$) $\mathbf{m_{v}\geq 1}$
\vspace{0.4em}

We choose all the vertices from every midleg in $S$, leaving one midleg aside as unpicked. Now if $l_{v}\geq 1$, we pick the vertex from each long leg, which is at a distance of three from $v$ in $B(v)$. Again, if $p_{v}\geq 1$, then we include all the pendants of $B(v)$ in $S$. 
\vspace{0.4em}

\noindent $2$) $\mathbf{p_{v}\geq 1, m_{v}=0}$
\vspace{0.4em}

Except for one pendant, we choose all the pendants of $B(v)$ in $S$. Also, we include all distance three vertices of $B(v)$ that occur along long legs when $l_{v}\geq 1$.
 
\vspace{0.4em}

\noindent $3$) $\mathbf{p_{v}=m_{v}=0}$
\vspace{0.4em}

It is easy to note that $l_{v}\geq 1$ as $v$ is a major stem. In this case, except for one long leg, we include all vertices that are at a distance of $3$ from $v$ along long legs in $S$.

\noindent As per our above construction, $\beta{(T)}+\sum\limits_{i=1,m_{i}\geq 2}^{l} (m_{i}-1)$ number of vertices has already been included in $S$. We now insert $M+1-l$ extra vertices in $S$. But this insertion of vertices depends on some circumstances listed below.
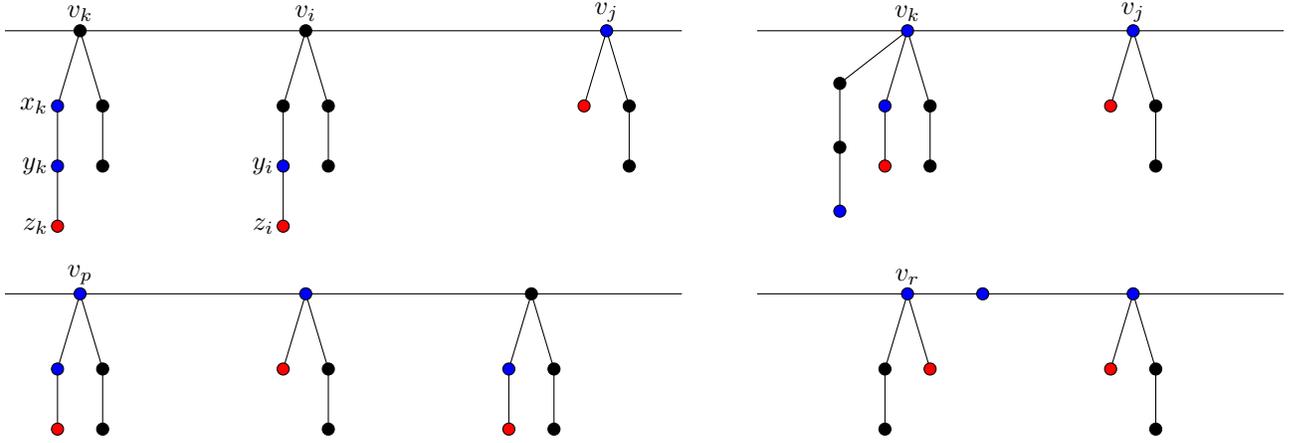
\begin{figure}[ht]\label{bounds1}
\begin{center}
\begin{tikzpicture}

\draw (0,0)--(-1,0);\draw (0,0)--(7,0); 
\draw (0,0)--(-0.3,-1);\draw (0,0)--(0.3,-1);\draw (-0.3,-1)--(-0.3,-1.8);\draw (-0.3,-1.8)--(-0.3,-2.6); \draw (0.3,-1)--(0.3,-1.8);
\draw (3,0)--(2.7,-1);\draw (3,0)--(3.3,-1);\draw (2.7,-1)--(2.7,-1.8);\draw (2.7,-1.8)--(2.7,-2.6); \draw (3.3,-1)--(3.3,-1.8);
\draw (7,0)--(6.7,-1);\draw (7,0)--(7.3,-1);\draw (7.3,-1)--(7.3,-1.8);

\draw (7,0)--(8,0);

\draw  [fill=black](0,0) circle [radius=0.08];

\draw  [fill=red](-0.3,-2.6) circle [radius=0.08];\draw  [fill=red](2.7,-2.6) circle [radius=0.08]; \draw  [fill=black](7.3,-1.8) circle [radius=0.08];

\draw  [fill=blue](-0.3,-1) circle [radius=0.08];\draw  [fill=black](0.3,-1) circle [radius=0.08]; \draw  [fill=blue](-0.3,-1.8) circle [radius=0.08]; \draw  [fill=black](0.3,-1.8) circle [radius=0.08];\draw  [fill=black](3,0) circle [radius=0.08];\draw  [fill=black](2.7,-1) circle [radius=0.08];\draw  [fill=black](3.3,-1) circle [radius=0.08];\draw  [fill=blue](2.7,-1.8) circle [radius=0.08];\draw  [fill=black](3.3,-1.8) circle [radius=0.08];\draw  [fill=blue](7,0) circle [radius=0.08];

\draw  [fill=black](7.3,-1) circle [radius=0.08];\draw  [fill=red](6.7,-1) circle [radius=0.08];

\node [above] at (0,0) {$v_k$};\node [above] at (3,0) {$v_{i}$};\node [above] at (7,0) {$v_{j}$}; \node [left] at (-0.3,-1) {$x_{k}$};\node [left] at (-0.3,-1.8) {$y_{k}$};\node [left] at (-0.3,-2.6) {$z_{k}$};
\node [left] at (2.7,-1.8) {$y_{i}$};\node [left] at (2.7,-2.6) {$z_{i}$};


\draw (9,0)--(16,0);

\draw (11,0)--(10.1,-.7);\draw (10.1,-.7)--(10.1,-2.4);
\draw (11,0)--(10.7,-1);\draw (11,0)--(11.3,-1);\draw (10.7,-1)--(10.7,-1.8); \draw (11.3,-1)--(11.3,-1.8);

\draw (14,0)--(13.7,-1);\draw (14,0)--(14.3,-1);

\draw (14.3,-1)--(14.3,-1.8);


\draw  [fill=blue](11,0) circle [radius=0.08]; \draw  [fill=blue](10.7,-1) circle [radius=0.08];\draw  [fill=black](11.3,-1) circle [radius=0.08]; \draw  [fill=red](10.7,-1.8) circle [radius=0.08]; \draw  [fill=black](11.3,-1.8) circle [radius=0.08];

\draw  [fill=blue](14,0) circle [radius=0.08];\draw  [fill=black](14.3,-1) circle [radius=0.08];\draw  [fill=red](13.7,-1) circle [radius=0.08];\draw  [fill=black](14.3,-1.8) circle [radius=0.08];
\draw  [fill=black](10.1,-.7) circle [radius=0.08];\draw  [fill=blue](10.1,-2.4) circle [radius=0.08];\draw  [fill=black](10.1,-1.55) circle [radius=0.08];

\node [above] at (11,0) {$v_k$};
\node [above] at (14,0) {$v_j$};

\draw (-1,-3.5)--(8,-3.5); 
\draw (0,-3.5)--(-0.3,-4.5);\draw (0,-3.5)--(0.3,-4.5);\draw (-0.3,-4.5)--(-0.3,-5.3); \draw (0.3,-4.5)--(0.3,-5.3);

\draw (3,-3.5)--(2.7,-4.5);\draw (3,-3.5)--(3.3,-4.5); \draw (3.3,-4.5)--(3.3,-5.3);

\draw (6,-3.5)--(5.7,-4.5);\draw (6,-3.5)--(6.3,-4.5);\draw (5.7,-4.5)--(5.7,-5.3);\draw (6.3,-4.5)--(6.3,-5.3);

\draw  [fill=blue](0,-3.5) circle [radius=0.08];

\node [above] at (0,-3.5){$v_{p}$};

\draw  [fill=red](5.7,-5.3) circle [radius=0.08];

\draw  [fill=black](6.3,-5.3) circle [radius=0.08];\draw  [fill=blue](-0.3,-4.5) circle [radius=0.08];

\draw  [fill=black](0.3,-4.5) circle [radius=0.08]; \draw  [fill=red](-0.3,-5.3) circle [radius=0.08]; \draw  [fill=black](0.3,-5.3) circle [radius=0.08];\draw  [fill=blue](3,-3.5) circle [radius=0.08];\draw  [fill=red](2.7,-4.5) circle [radius=0.08];\draw  [fill=black](3.3,-4.5) circle [radius=0.08];\draw  [fill=black](3.3,-5.3) circle [radius=0.08];\draw  [fill=black](6,-3.5) circle [radius=0.08];

\draw  [fill=black](6.3,-4.5) circle [radius=0.08];\draw  [fill=blue](5.7,-4.5) circle [radius=0.08];

\draw (9,-3.5)--(16,-3.5);

\draw (11,-3.5)--(10.7,-4.5);\draw (11,-3.5)--(11.3,-4.5);\draw (10.7,-4.5)--(10.7,-5.3); 

\draw (14,-3.5)--(13.7,-4.5);\draw (14,-3.5)--(14.3,-4.5);

\draw (14.3,-4.5)--(14.3,-5.3);


\draw  [fill=blue](11,-3.5) circle [radius=0.08];\draw  [fill=blue](12,-3.5) circle [radius=0.08]; \draw  [fill=black](10.7,-4.5) circle [radius=0.08];\draw  [fill=red](11.3,-4.5) circle [radius=0.08]; \draw  [fill=black](10.7,-5.3) circle [radius=0.08]; 

\draw  [fill=blue](14,-3.5) circle [radius=0.08];\draw  [fill=black](14.3,-4.5) circle [radius=0.08];\draw  [fill=red](13.7,-4.5) circle [radius=0.08];\draw  [fill=black](14.3,-5.3) circle [radius=0.08];

\node[above] at (11,-3.5) {$v_{r}$};

\end{tikzpicture}
\caption{ Tree $T$ having red vertices as elements of a metric basis of it, blue vertices are extra inserted to form a metric basis $S$ of $T^{3}$, above (left and right) figures correspond to the situation when $T$ contains at least one major stem, and below figures indicate the situation when there is no major stem containing long legs in $T$} \label{fig3i}
\end{center}
\end{figure}

\noindent \textit{Method of insertion of $M+1-l$  extra vertices:}

\vspace{0.6em}

\noindent a) First, we consider the case when there is at least one major stem containing long legs in $T$. (see Figure \ref{fig3i})

\vspace{0.4em}

i) If there is at least one major stem (say $v_{k}$) containing long legs satisfying $m_{v_{k}}\leq 1$, then we select a long leg (say $L_k$) attached to $v_{k}$ from which the vertex $z_{k}$ satisfying $d_{T}(v_{k},z_{k})=3$ already been included in $S$. Next, we pick $x_{k},y_{k}$ from $L_{k}$ satisfying $d_{T}(v_{k},x_{k})=1, d_{T}(v_{k},y_{k})=2$ and include them in $S$. 

\vspace{0.4em}
Now if $v_{i}\neq v_{k}$ be a major stem possessing long legs satisfying $m_{v_{i}}\leq 1$, then we select a long leg $L_{i}$ of $v_{i}$ from where $z_{i}$ is already chosen for $S$ satisfying $d_{T}(v_{i},z_{i})=3$. We pick $y_{i}$ from $L_{i}$ satisfying $d_{T}(v_{i},y_{i})=2$ and insert in $S$.
\vspace{0.4em}
 
Also, we include all those major stems $v_{j}$ in $S$ for which $l_{v_{j}}=0$ and $m_{v_{j}}\leq 1$. 
\vspace{0.4em}

ii) If every major stem that contains at least one long leg also satisfies $m_{v}\geq 2$, then we insert one such major stem (say $v_{k}$) in $S$.  We also insert all those major stems $v_{j}$ in $S$ that satisfy $l_{v_{j}}=0$ and $m_{v_{j}}\leq 1$. 

\vspace{0.7em}

\noindent  b) Next, we consider the case when there is no major stem containing long legs in $T$.
\vspace{0.4em}

If there exists at least one major stem $v_p$ satisfying $m_{v_p}\geq 2$, then include $v_p$ in $S$, otherwise, we include a neighbour of an arbitrary major stem $v_{r}$ in $S$, which does not belong to $B(v_{r})$. We also include all those major stems in $S$ which contain at most one midleg attached to them.

\vspace{0.7em}
\noindent Therefore, the maximum number of extra vertex insertions in the aforementioned scenarios are $M+1-l$.
\vspace{0.6em}

\noindent \textbf{\textit{proof showing that $S$ is a resolving set of $T^{3}$: }} 

\vspace{0.3em}
\noindent We now show that $S$ resolves every pair of vertices $u,v\in V\setminus S$. For this, it is sufficient to prove for the cases when $d_{T}(u,v)\leq 5$ as per Theorem \ref{neccsuff}. Recall that, in $T$, there always exists a unique path joining any two vertices. From the construction of $S$, one can observe that there always exists a major stem $v_{1}$ (say) ($v_{1}=v_{k}\hspace{0.4em}\mbox{or} \hspace{0.5em} v_{p} \hspace{0.4em}\mbox{or} \hspace{0.5em} v_{r}$ in Figure \ref{fig3i}) having three consecutive vertices of $B[v_{1}]$ (or two vertices from $B[v_{1}]$ and one is the neighbour of $v_{1}$ that does not belong to $B[v_{1}]$) and all other major stems having two consecutive vertices from their legs included in $S$ that occur in the extended path of $u-v$ (i.e., in $T_{u}$ or $T_{v}$) or within the same component of $u,v$ (i.e., in $T_{u,v}$), then using Corollary \ref{distance} and Corollary \ref{four distance}, $u,v$ can be resolved by one of these leg vertices that has been selected for $S$. 
\end{proof}

\begin{figure}\label{bounds}
\begin{center}
\begin{tikzpicture}

\draw[<-][dotted] (0,0.5)--(2,0.5);
\draw[->][dotted] (2,0.5)--(3,0.5);
\node at (1.4,0.8) {\tiny{0 (mod 3)}};

\draw[<-][dotted] (3.1,0.5)--(6,0.5);
\draw[->][dotted] (6,0.5)--(7,0.5);
\node at (5.4,0.8){\tiny{1 (mod 3)}};
 
\node [right] at (3.3,-1.2) {$x_{0}$};
\node[above] at (6,2.8) {$u_{1}$};
\node[above] at (6.5,0.8) {$u_{2}$};
\node[below] at (5.3,-2.8) {$u_{k}$};

\draw (0,0)--(7,0); \draw (9,0)--(16,0);
\draw (0,0)--(-0.3,-1);\draw (0,0)--(0.3,-1);\draw (-0.3,-1)--(-0.3,-1.8); \draw (0.3,-1)--(0.3,-1.8);
\draw (3,0)--(2.7,-1);\draw (3,0)--(3.3,-1);\draw (2.7,-1)--(2.7,-1.8); \draw (3.3,-1)--(3.3,-1.8);
\draw (7,0)--(6.7,-1);\draw (7,0)--(7.3,-1);
\draw (6,0)--(6.4,.8);
\draw (3,0)--(6.5,2.3);\draw (6.5,2.3)--(6.2,1.5);\draw (6.5,2.3)--(6.8,1.5);\draw (5.8,1.85)--(5.9,2.7);
\draw (3,0)--(6.5,-2.3);\draw (6.5,-2.3)--(6.2,-3.1);\draw (6.5,-2.3)--(6.8,-3.1);\draw (5.8,-1.85)--(5.3,-2.7);

\draw  [fill=black](0,0) circle [radius=0.08];\draw  [fill=black](1,0) circle [radius=0.08];\draw  [fill=black](2,0) circle [radius=0.08];\draw  [fill=black](5,0) circle [radius=0.08];\draw  [fill=black](4,0) circle [radius=0.08]; \draw  [fill=blue](-0.3,-1) circle [radius=0.08];\draw  [fill=black](0.3,-1) circle [radius=0.08]; \draw  [fill=red](-0.3,-1.8) circle [radius=0.08]; \draw  [fill=black](0.3,-1.8) circle [radius=0.08];\draw  [fill=blue](3,0) circle [radius=0.08];\draw  [fill=blue](2.7,-1) circle [radius=0.08];\draw  [fill=black](3.3,-1) circle [radius=0.08];\draw  [fill=red](2.7,-1.8) circle [radius=0.08];\draw  [fill=black](3.3,-1.8) circle [radius=0.08];\draw  [fill=blue](7,0) circle [radius=0.08];\draw  [fill=black](7.3,-1) circle [radius=0.08];\draw  [fill=red](6.7,-1) circle [radius=0.08];\draw  [fill=black](6,0) circle [radius=0.08];\draw  [fill=black](6.4,.8) circle [radius=0.08];\draw  [fill=blue](6.5,2.3) circle [radius=0.08];\draw  [fill=red](6.2,1.5) circle [radius=0.08];\draw  [fill=black](6.8,1.5) circle [radius=0.08];
\draw  [fill=black](5.8,1.85) circle [radius=0.08];\draw  [fill=black](5.9,2.7) circle [radius=0.08];\draw  [fill=black](4,0.66) circle [radius=0.08];\draw  [fill=black](5,1.32) circle [radius=0.08]; \draw  [fill=black](5.8,-1.85) circle [radius=0.08];
\draw  [fill=black](5.3,-2.7) circle [radius=0.08]; \draw  [fill=black](4,-0.66) circle [radius=0.08];\draw  [fill=black](5,-1.32) circle [radius=0.08];
\draw  [fill=blue](6.5,-2.3) circle [radius=0.08];\draw  [fill=red](6.2,-3.1) circle [radius=0.08];\draw  [fill=black](6.8,-3.1) circle [radius=0.08];

\draw [dashed](6,-.5)--(6,-1.3);

\draw (9,0)--(8.7,-1);\draw (9,0)--(9.3,-1);\draw (8.7,-1)--(8.7,-1.8); \draw (9.3,-1)--(9.3,-1.8);
\draw (12,0)--(11.7,-1);\draw (12,0)--(12.3,-1);\draw (11.7,-1)--(11.7,-1.8); \draw (12.3,-1)--(12.3,-1.8);
\draw (16,0)--(15.7,-1);\draw (16,0)--(16.3,-1);
\draw (15,0)--(15.4,.8);
\draw (12,0)--(15,2.3);\draw (15,2.3)--(14.7,1.5);\draw (15,2.3)--(15.3,1.5);\draw (15,2.3)--(15.9,1.9);\draw (14,1.53)--(14.2,2.5);
\draw (12,0)--(15,-2.3);\draw (15,-2.3)--(14.7,-3.1);\draw (15,-2.3)--(15.3,-3.1);\draw (14,-1.53)--(13.7,-2.5);

\draw  [fill=black](9,0) circle [radius=0.08]; \draw  [fill=blue](8.7,-1) circle [radius=0.08];\draw  [fill=black](9.3,-1) circle [radius=0.08]; \draw  [fill=red](8.7,-1.8) circle [radius=0.08]; \draw  [fill=black](9.3,-1.8) circle [radius=0.08];\draw  [fill=black](10,0) circle [radius=0.08];\draw  [fill=black](11,0) circle [radius=0.08];\draw  [fill=blue](12,0) circle [radius=0.08];\draw  [fill=black](12.3,-1) circle [radius=0.08];\draw  [fill=blue](11.7,-1) circle [radius=0.08];\draw  [fill=red](11.7,-1.8) circle [radius=0.08];\draw  [fill=black](12.3,-1.8) circle [radius=0.08];\draw  [fill=black](13,0) circle [radius=0.08];\draw  [fill=black](14,0) circle [radius=0.08];\draw  [fill=black](15,0) circle [radius=0.08];\draw  [fill=black](15.4,.8) circle [radius=0.08];
\draw  [fill=blue](16,0) circle [radius=0.08]; \draw  [fill=black](16.3,-1) circle [radius=0.08];\draw  [fill=red](15.7,-1) circle [radius=0.08];

\draw  [fill=blue](15,2.3) circle [radius=0.08];\draw  [fill=red](14.7,1.5) circle [radius=0.08];\draw  [fill=red](15.3,1.5) circle [radius=0.08];\draw  [fill=black](14.2,2.5) circle [radius=0.08];\draw  [fill=black](15.9,1.9) circle [radius=0.08];\draw  [fill=black](14,1.5) circle [radius=0.08]; \draw  [fill=black](13.3,1) circle [radius=0.08];\draw  [fill=black](12.6,.46) circle [radius=0.08];

\draw  [fill=blue](15,-2.3) circle [radius=0.08];\draw  [fill=red](14.7,-3.1) circle [radius=0.08];\draw  [fill=black](15.3,-3.1) circle [radius=0.08];\draw  [fill=black](14,-1.5) circle [radius=0.08]; \draw  [fill=black](13.7,-2.5) circle [radius=0.08]; \draw  [fill=black](13.3,-1) circle [radius=0.08];\draw  [fill=black](12.6,-.46) circle [radius=0.08];

\draw [dashed](15,-.5)--(15,-1.3);

\node [above] at (0,0) {$v_1$};\node [above] at (3,0) {$v_{0}$};\node [right] at (6.5,2.3) {$w_{1}$};\node [right] at (7,0) {$w_{2}$};\node [right] at (6.5,-2.3) {$w_{k}$};
\node [above] at (9,0) {$v_1$};\node [above] at (12,0) {$v_{0}$};\node [above] at (15,2.3) {$w_{1}$};\node [right] at (16,0) {$w_{2}$};\node [right] at (15,-2.3) {$w_{k}$};

\draw[<-][dotted] (9,0.5)--(11,0.5);
\draw[->][dotted] (11,0.5)--(12,0.5);
\node at (10.4,0.8) {\tiny{0 (mod 3)}};

\draw[<-][dotted] (12.1,0.5)--(13,0.5);
\draw[->][dotted] (13,0.5)--(16,0.5);
\node at (14.3,0.8){\tiny{1 (mod 3)}};
 \node [right] at (12.3,-1.2) {$x_{0}$};

\node[above] at (14.1,2.6) {$u_{1}$};
\node[above] at (15.5,0.8) {$u_{2}$};
\node[below] at (13.5,-2.6) {$u_{k}$};

\end{tikzpicture}
\caption{Tree $T$ with $\beta{(T^{3})}=n$ (left when $n$ is odd, right when $n$ is even) where the red vertices form the metric basis of $T$ and the blue vertices are extra inserted to form a metric basis of $T^{3}$ } \label{fig3}
\end{center}

\end{figure}
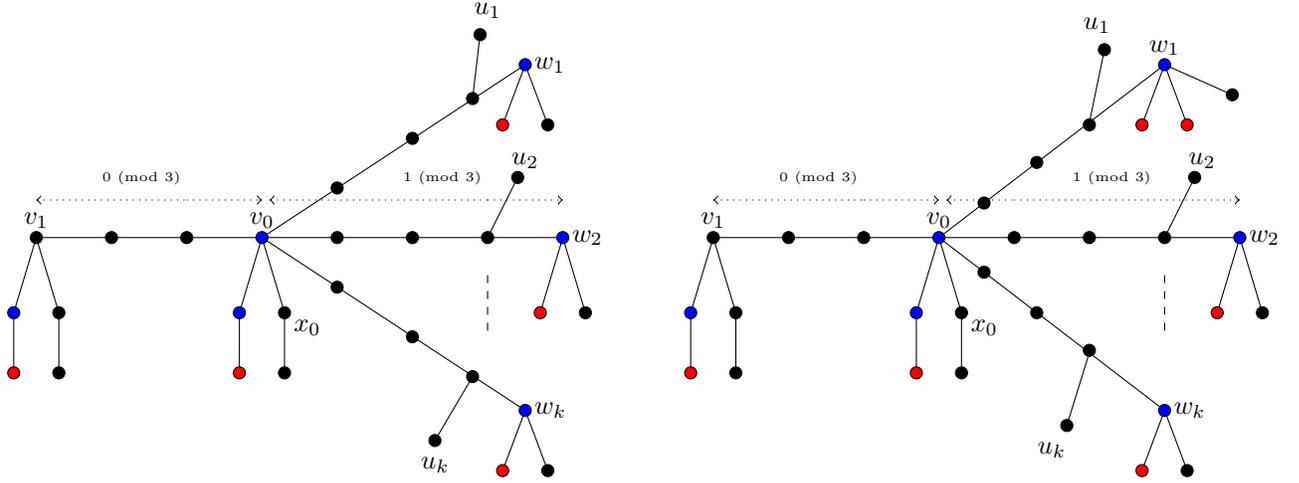

\begin{lem}
For any positive integer $n$ there always exists a tree $T$ satisfying $\beta{(T^{3})}=n$.  
\end{lem}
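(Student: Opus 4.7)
The plan is to exhibit, for every positive integer $n$, a tree $T$ with $\beta(T^3)=n$, following the construction templates sketched in Figure~\ref{fig3}. Small values of $n$ are handled directly by short paths ($P_2^3=P_2$ yields $n=1$; $P_3^3=K_3$ yields $n=2$; $P_4^3=K_4$ yields $n=3$), so the heart of the argument is a parametric construction that covers all remaining values.

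For $n\ge 4$, I would construct $T_n$ as in Figure~\ref{fig3}: a central path joining two ``anchor'' major stems $v_0,v_1$, each carrying a small fixed leg configuration (one pendant and one mid leg), together with $k=k(n)$ spider-like branches emanating from $v_0$, each terminating at a further major stem $w_i$ with two short legs. Odd and even $n$ use slightly different templates, the even case having one additional leg placed at $w_1$; the parameter $k$ is chosen linearly in $n$ so that the metric dimension scales correctly. By Corollary~\ref{6}, $\beta(T_n)$ equals the number of major stems plus the extra-leg contribution, i.e.\ $k+2$ in the odd case and $k+3$ in the even case.

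To pin down $\beta(T_n^3)$ exactly, I would apply Theorems~\ref{lowerbound} and~\ref{upperbound} to $T_n$. Since every major stem of $T_n$ is engineered to carry exactly one mid leg (apart from the even-case adjustment at $w_1$), the sums $\sum m_i-l$ and $\sum(m_i-1)$ appearing in the two bounds simplify drastically. The lower bound collapses to $\beta(T_n)$. A matching resolving set is built from the red vertices in Figure~\ref{fig3}: the distance-two vertex of each midleg, together with one carefully chosen spine vertex that handles the remaining pairs $(v_0,x_0)$, $(v_1,x_0)$, and the cross-pairs among the $w_i$'s. Counting these vertices gives a resolving set of cardinality matching the lower bound, forcing $\beta(T_n^3)=\beta(T_n)$, and tuning $k$ so that $k+2$ (odd) or $k+3$ (even) equals $n$ completes the existence argument for every $n\ge 4$.

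The main obstacle is verifying that this candidate vertex set really is a resolving set. Resolvability in $T^3$ is governed by the mod-$3$ congruences of Lemmas~\ref{3} and~\ref{4} and is codified by Theorem~\ref{neccsuff}; one must check, for every edge $uv\in E(T^3)$ and every four- or five-distance pair of vertices in $T_n$, that the candidate set contains a vertex at the prescribed residue distance. The intermediate path lengths from each $w_i$ to $v_0$ are chosen to be divisible by $3$ precisely so that these congruence conditions line up, and the odd/even split in the construction is there to handle the single obstructed pair that the odd template cannot resolve in the even case. Once this case analysis is completed, the equality $\beta(T_n^3)=n$ follows.
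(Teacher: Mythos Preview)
Your proposal misreads both Figure~\ref{fig3} and the paper's strategy, and the alternative you sketch would not work as stated.

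First, in the paper's construction the anchor stems $v_0,v_1$ each carry \emph{two midlegs}, not one pendant and one midleg; and the distances are $d_T(v_0,w_i)\equiv 1\pmod 3$, not $0$. These choices are what make the counting work: with two midlegs at each of $v_0,v_1$ we have $l=2$ in Theorem~\ref{upperbound}, and the congruence $\equiv 1$ is precisely what forces each pair $\{u_i,w_i\}$ (with $u_i$ the non-leg neighbour of $w_i$) to require a fresh resolving vertex beyond the pendant already chosen at $w_i$. In Figure~\ref{fig3} the metric basis of $T^3$ is the union of the red \emph{and} blue vertices; the red vertices alone give only $\beta(T)\approx n/2$, and the blue vertices contribute the remaining $\approx n/2$. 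So the paper does not aim for $\beta(T^3)=\beta(T)$ at all: it builds a tree with $\beta(T)=n/2$ (even) or $(n-1)/2$ (odd), argues $|S|\ge n$ by exhibiting $k+1$ unresolved pairs after the midleg insertions, and then matches this with the upper bound $\beta(T)+\sum(m_i-1)+M+1-l=n$ from Theorem~\ref{upperbound}.

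Second, your own route---engineering $\beta(T^3)=\beta(T)=n$ directly---is not supported by the construction you describe. With exactly one midleg per major stem the lower bound of Theorem~\ref{lowerbound} does collapse to $\beta(T)$, but the upper bound of Theorem~\ref{upperbound} becomes $\beta(T)+M+1$, so the two do not meet and you must produce an explicit resolving set of size $\beta(T)$. The characterisation in Section~8 shows this is delicate: it requires, among other things, three major stems at mutual distances $\equiv 2\pmod 3$ along the central path, which is incompatible with your stated choice $d_T(v_0,w_i)\equiv 0\pmod 3$. Indeed, if all inter-stem distances are $\equiv 0\pmod 3$, Proposition~\ref{pair} already rules out $\beta(T^3)=\beta(T)$. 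So the congruence you picked to ``line up'' the resolvability conditions actually guarantees failure.
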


\begin{proof}
First, we consider the situation when $n$ is even. For this, we consider a tree $T$ (see Figure \ref{fig3}) having $M=\dfrac{n}{2}-1$ number of major stems. Here each of the two major stems $v_{0},v_{1}$ contains exactly two midlegs satisfying $d_{T}(v_{0},v_{1})\equiv 0$ (mod $3$), other $k=M-2=\dfrac{n}{2}-3$ major stems $w_{i}, 1\leq i\leq k$ contain pendants as their only legs where $d_{T}(v_{0},w_{i})\equiv 1$ (mod $3$). Furthermore, we consider $k-1$ of these major stems to contain exactly two pendants and one among them to contain exactly three pendants. From Theorem \ref{0}, it is clear that the metric dimension of $T$, i.e., $\beta{(T)}=\dfrac{n}{2}$. 

\vspace{0.4em}
Below, we construct a resolving set $S$ of $T^{3}$. Since $T$ contains exactly two major stems having two midlegs, from Theorem \ref{lowerbound} it follows that we need to insert at least two more vertices from these midlegs in $S$. Also, we need to include $k$ $(=\dfrac{n}{2}-3)$ more vertices in $S$ to resolve the following pair of vertices $\{u_{1},w_{1}\}$, $\{u_{2}, w_{2}\},\hdots,\{u_{k}w_{k}\}$ in $T^{3}$. We insert $w_{1},w_{2},\hdots, w_{k}$ in $S$. Again, no vertex of $S$ inserted so far can resolve the vertices $v_{0},x_{0}$ in $T^{3}$, therefore, we include one more vertex $v_{0}$ in $S$. One can verify that by applying Theorem \ref{neccsuff}, $S$ becomes a resolving set of $T^{3}$. Furthermore, $|S|\geq \beta{(T)}+2+k+1=\dfrac{n}{2}+3+\dfrac{n}{2}-3=n$.
From Theorem \ref{upperbound} it follows that $\beta{(T^{3})}\leq \beta{(T)}+\sum\limits_{i=1,m_{i}\geq 2}^{l} (m_{i}-1)+M+1-l=\dfrac{n}{2}+2+ (\dfrac{n}{2}-1)+1-2=n$. Therefore, $\beta{(T^{3})}=n$ and hence $S$ becomes a metric basis of $T^{3}$.

\vspace{0.4em}
\noindent Next, we consider the case when $n$ is odd. Then we consider a tree $T$ (see Figure \ref{fig3}) having $M=\dfrac{n-1}{2}$ number of major stems, where each of the two major stems $v_{0},v_{1}$ contains exactly two midlegs satisfying $d_{T}(v_{0},v_{1})\equiv 0$ (mod $3$) and other $k=M-2=\dfrac{n-5}{2}$ number of major stems $w_{i}, 1\leq i\leq k$ contain two pendants each satisfying $d_{T}(v_{0},w_{i})\equiv 1$ (mod $3$). Proceeding similarly as above, one can verify that $\beta{(T)}=\dfrac{n-1}{2}$ and a minimum resolving set $S$ of $T^{3}$ contains exactly $n$ vertices, hence $\beta{(T^{3})}=n$.
\end{proof}

The following corollary is immediate from the above lemma:

\begin{cor}
Given the lower and upper bounds of $\beta{(T^{3})}$ for a tree $T$, there always exist trees attaining every value between the bounds.
\end{cor}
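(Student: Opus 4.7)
The plan is to deduce the corollary as an immediate consequence of the preceding lemma. For any tree $T$, Theorem \ref{lowerbound} and Theorem \ref{upperbound} supply concrete nonnegative integers $L=L(T)$ and $U=U(T)$ that bracket $\beta(T^{3})$. The corollary asks that every integer $n$ with $L\leq n\leq U$ be realized as the metric dimension of the cube of some tree $T'$ (not necessarily $T$ itself).

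First I would observe that $L$ and $U$ are positive integers whenever $T$ has at least one major stem, since both are defined via finite sums of nonnegative terms plus $\beta(T)$, and $\beta(T)\geq 1$ for any tree that is not a path by Corollary \ref{6}. The preceding lemma then furnishes, for each positive integer $n$, an explicit tree $T_n$ (constructed differently for even and odd $n$) satisfying $\beta(T_n^{3})=n$. Iterating this construction across $n\in\{L,L+1,\ldots,U\}$ produces the family of trees demanded by the corollary.

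The only thing to verify for a rigorous write-up is that the interval $[L,U]$ lies within the positive integers, which is immediate from the explicit formulae in the two bound theorems together with $\beta(T)\geq 1$. There is no deeper obstacle beyond the preceding lemma itself; the corollary is essentially a one-line invocation of that lemma, so I do not anticipate needing any new combinatorial ingredient.
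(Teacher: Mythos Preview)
Your proposal is correct and matches the paper's approach: the paper states the corollary is immediate from the preceding lemma, and your argument is precisely the one-line invocation of that lemma for each integer $n\in\{L,\ldots,U\}$. The only addition you make is the routine check that $L$ and $U$ are positive integers, which is harmless.
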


\section{Metric dimension of some well-known cube of trees}
In this section, we present some well-known cubes of trees (e.g., caterpillar, lobster tree, spider tree, and $d$-regular tree) that have attained the expected bounds for the metric dimension.

Let $P$ be the central path \footnote{longest path between any two pendant vertices of a tree} of caterpillar/lobster, and $v_0$, $v_n$ be the starting and ending major stems on $P$. The total number of major stems of any of the trees above-mentioned containing at least two midlegs is denoted by $l$. On the other hand, $m_{i}$ denotes the number of midlegs attached to the major stem $v_{i}$, where $1\leq i\leq l$. Below, we construct the resolving sets $S_0$ and $S$ of $T$ and $T^3$  respectively. In each of the figures in this section, the red vertices form $S_{0}$. One can verify that such choices can be made by Corollary \ref{6}. Furthermore, $S$ can be obtained by inserting the blue vertices in $S_{0}$.
Following Theorem \ref{neccsuff}, it can be verified that $S$ resolves any two arbitrary vertices of $V$. One can find the lower and upper bounds of $\beta(T^{3})$ by applying Theorem \ref{lowerbound} and Theorem \ref{upperbound} respectively.

\begin{exmp}
Let $T=(V, E)$ be a caterpillar. It is easy to observe that there can not be any midleg (or long leg) attached to any stem except $v_{0}$ or $v_{n}$. Furthermore, if there is any midleg or long leg attached to $v_{0}$ or $v_{n}$, then that should be one in number. Also, no long leg and midleg can occur simultaneously at $v_{0}$ or $v_{n}$. Again, while constructing $S$, first we consider that $v_{0}$ contains a long leg (or mid leg) attached to it.  A similar choice of vertices can be made for $S$ if $v_{n}$ contains a long leg (or mid leg) attached to $v_{n}$ and $v_{0}$ contains only pendants. Another case remains when $v_{0},v_{n}$ contains only pendants attached to them. Hence, $\beta{(T)}\leq \beta{(T^{3})}\leq \beta{(T)}+3$. (see Figure \ref{caterpillar})  
\end{exmp}

\begin{figure}
\begin{center}
\begin{tikzpicture}
\node at (0,0.3){$v_{0}$};
\node at (7,0.3){$v_{n}$};

\draw (-1.1,-1.3)--(-.8,-.3);\draw (-.8,-.3)--(0,0);\draw (0,0)--(7,0); \draw (0,0)--(0,-1);\draw (0,0)--(0.5,-1); \draw (1,0)--(1.5,-.8); \draw (3,0)--(3.5,-.8); \draw (3,0)--(3,-.8);\draw (3,0)--(2.5,-.8);\draw (4,0)--(4.5,-.8);\draw (4,0)--(3.8,-.8);\draw (5,0)--(5,-.8);\draw (7,0)--(6.5,-.8);\draw (7,0)--(7.5,-.8);\draw (7,0)--(7.9,-.3);\draw (7.9,-.3)--(8,-.8);

\draw  [fill=blue](0,0) circle [radius=0.08]; \draw  [fill=black](1,0) circle [radius=0.08];\draw  [fill=black](2,0) circle [radius=0.08]; \draw  [fill=black](3,0) circle [radius=0.08]; \draw  [fill=black](4,0) circle [radius=0.08]; \draw  [fill=black](5,0) circle [radius=0.08]; \draw  [fill=black](6,0) circle [radius=0.08];\draw  [fill=blue](7,0) circle [radius=0.08];\draw  [fill=blue](-0.95,-.8) circle [radius=0.08]; \draw  [fill=black](-1.1,-1.3) circle [radius=0.08]; \draw  [fill=red](-.8,-.3) circle [radius=0.08]; \draw  [fill=red](0,-1) circle [radius=0.08]; \draw  [fill=black](0.5,-1) circle [radius=0.08]; \draw  [fill=black](1.5,-.8) circle [radius=0.08]; \draw  [fill=black](3.5,-.8) circle [radius=0.08];\draw  [fill=red](3,-.8) circle [radius=0.08]; \draw  [fill=red](2.5,-.8) circle [radius=0.08];\draw  [fill=black](4.5,-.8) circle [radius=0.08]; \draw  [fill=red](3.8,-.8) circle [radius=0.08];\draw  [fill=black](5,-.8) circle [radius=0.08];\draw  [fill=red](6.5,-.8) circle [radius=0.08];\draw  [fill=red](7.5,-.8) circle [radius=0.08];\draw  [fill=black](7.9,-.3) circle [radius=0.08];\draw  [fill=black](8,-.8) circle [radius=0.08];


\node at (0,-1.7){$v_{0}$};
\node at (7,-1.7){$v_{n}$};

\draw (-.8,-2.3)--(0,-2);\draw (0,-2.6)--(0,-2);\draw (.5,-2.5)--(0,-2);\draw (0,-2)--(7,-2); \draw (2,-2)--(1.8,-2.5);\draw (2,-2)--(2.4,-2.48); \draw (3,-2)--(3,-2.5); \draw (5,-2)--(5,-2.5); \draw (5,-2)--(4.65,-2.5);\draw (5,-2)--(5.4,-2.48);\draw (6,-2)--(6,-2.5); \draw (7,-2)--(7,-2.5); \draw (7,-2)--(6.65,-2.5);\draw (7,-2)--(7.4,-2.48);

\draw  [fill=red](-.8,-2.3) circle [radius=0.08]; \draw  [fill=red](0,-2.6) circle [radius=0.08]; \draw  [fill=black](.5,-2.5) circle [radius=0.08]; \draw  [fill=blue](0,-2) circle [radius=0.08]; \draw  [fill=blue](1,-2) circle [radius=0.08]; \draw  [fill=black](2,-2) circle [radius=0.08]; \draw  [fill=black](3,-2) circle [radius=0.08];\draw  [fill=black](4,-2) circle [radius=0.08];\draw  [fill=black](5,-2) circle [radius=0.08];\draw  [fill=black](6,-2) circle [radius=0.08];\draw  [fill=blue](7,-2) circle [radius=0.08];
\draw  [fill=red](1.8,-2.5) circle [radius=0.08];  \draw  [fill=black](2.4,-2.48) circle [radius=0.08];  \draw  [fill=black](3,-2.5) circle [radius=0.08];  \draw  [fill=red](5,-2.5) circle [radius=0.08]; \draw  [fill=red](4.65,-2.5) circle [radius=0.08];\draw  [fill=black](5.4,-2.48) circle [radius=0.08];\draw  [fill=black](6,-2.5) circle [radius=0.08];\draw  [fill=red](7,-2.5) circle [radius=0.08];\draw  [fill=red](6.65,-2.5) circle [radius=0.08];\draw  [fill=black](7.4,-2.48) circle [radius=0.08];
\end{tikzpicture}
\end{center}
\caption{caterpillar}\label{caterpillar}
\end{figure}
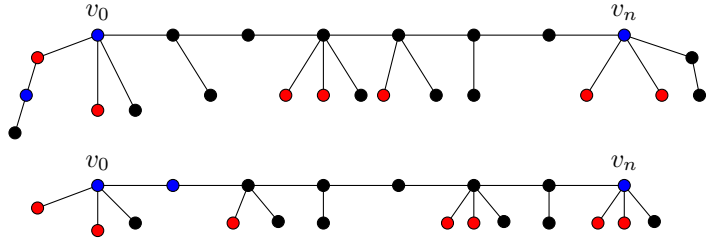

\begin{figure}
\begin{center}
\begin{tikzpicture}
\draw (-.6,-3.8)--(-.6,-4.8);\draw (-.6,-3.8)--(0,-3.5);\draw (-.2,-4)--(0,-3.5);\draw (-.2,-4)--(-.2,-4.5); \draw (.2,-4)--(0,-3.5);\draw (.2,-4)--(.2,-4.5);

\draw (.5,-3.8)--(0,-3.5);\draw (0,-3.5)--(7,-3.5); \draw (1,-3.5)--(1,-4);\draw (3,-3.5)--(2.8,-4); \draw (3,-3.5)--(3.2,-4); \draw (3.2,-4)--(3.2,-4.5);
\draw (3.8,-4)--(4,-3.5);\draw (3.8,-4)--(3.8,-4.5); \draw (4.2,-4)--(4,-3.5);\draw (4.2,-4)--(4.2,-4.5);   \draw (6,-3.5)--(6,-4);
\draw (7.5,-3.8)--(7,-3.5);\draw (7.5,-3.8)--(7.5,-4.5); \draw (6.8,-4)--(7,-3.5); \draw (7.2,-4)--(7,-3.5); \draw (6,-4)--(6.3,-4.5);\draw (6,-4)--(5.7,-4.5);

\draw  [fill=black](-.6,-4.8) circle [radius=0.08]; \draw  [fill=red](-.6,-3.8) circle [radius=0.08]; \draw  [fill=blue](-.6,-4.3) circle [radius=0.08];\draw  [fill=blue](0,-3.5) circle [radius=0.08]; \draw  [fill=blue](-.2,-4) circle [radius=0.08]; \draw  [fill=red](-.2,-4.5) circle [radius=0.08]; \draw  [fill=black](.2,-4) circle [radius=0.08];\draw  [fill=black](.2,-4.5) circle [radius=0.08];\draw  [fill=red](.5,-3.8) circle [radius=0.08]; \draw  [fill=black](-.6,-4.8) circle [radius=0.08]; \draw  [fill=black](1,-3.5) circle [radius=0.08]; \draw  [fill=black](2,-3.5) circle [radius=0.08]; \draw  [fill=black](3,-3.5) circle [radius=0.08];\draw  [fill=black](4,-3.5) circle [radius=0.08]; \draw  [fill=black](5,-3.5) circle [radius=0.08]; \draw  [fill=black](6,-3.5) circle [radius=0.08];\draw  [fill=blue](7,-3.5) circle [radius=0.08];     \draw  [fill=red](2.8,-4) circle [radius=0.08];
\draw  [fill=black](3.2,-4) circle [radius=0.08];\draw  [fill=black](3.2,-4.5) circle [radius=0.08];\draw  [fill=black](4.2,-4) circle [radius=0.08];\draw  [fill=black](4.2,-4.5) circle [radius=0.08];

\draw  [fill=black](6,-4) circle [radius=0.08];

\draw  [fill=black](7.5,-3.8) circle [radius=0.08]; \draw  [fill=black](7.5,-4.5) circle [radius=0.08];\draw  [fill=red](6.8,-4) circle [radius=0.08]; \draw  [fill=red](7.2,-4) circle [radius=0.08];  \draw  [fill=red](3.8,-4.5) circle [radius=0.08];

\draw  [fill=blue](3.8,-4) circle [radius=0.08]; 

\draw  [fill=black](1,-4) circle [radius=0.08]; \draw  [fill=black](6.3,-4.5) circle [radius=0.08]; \draw  [fill=red](5.7,-4.5) circle [radius=0.08];

\node [above] at (0,-3.5){$v_{0}$};
\node [above] at (7,-3.5){$v_{n}$};


\draw (.6,-6.4)--(1,-6);\draw (.6,-6.4)--(.6,-7.5); \draw (.9,-6.8)--(1,-6); \draw (1.2,-6.8)--(1,-6);\draw (1.5,-6.4)--(1,-6);\draw (1.5,-6.4)--(1.6,-7);
\draw (3.3,-6.5)--(3,-6);\draw (3,-6)--(3,-6.5);\draw (2.7,-6.5)--(3,-6);\draw (4,-6)--(4,-6.5);\draw (5.3,-6.5)--(5,-6);\draw (5.3,-6.5)--(5.3,-7);\draw (4.7,-6.5)--(5,-6);\draw (4.7,-6.5)--(4.7,-7);\draw (7.4,-6.5)--(7,-6);\draw (7.4,-6.5)--(7.4,-7.5);\draw (6.6,-6.5)--(7,-6);\draw (6.6,-6.5)--(6.6,-7);\draw (7,-6)--(7,-6.5);   \draw (1,-6)--(7,-6);

\draw  [fill=blue](1,-6) circle [radius=0.08]; \draw  [fill=red](.6,-6.4) circle [radius=0.08]; \draw  [fill=blue](.6,-6.95) circle [radius=0.08]; \draw  [fill=black](.6,-7.5) circle [radius=0.08];\draw  [fill=red](.9,-6.8) circle [radius=0.08]; \draw  [fill=red](1.2,-6.8) circle [radius=0.08]; \draw  [fill=red](1.2,-6.8) circle [radius=0.08];\draw  [fill=black](1.5,-6.4) circle [radius=0.08]; \draw  [fill=blue](1,-6) circle [radius=0.08]; \draw  [fill=black](1.6,-7) circle [radius=0.08];\draw  [fill=red](3.3,-6.5) circle [radius=0.08]; \draw  [fill=black](3,-6) circle [radius=0.08]; \draw  [fill=black](3,-6.5) circle [radius=0.08]; \draw  [fill=red](2.7,-6.5) circle [radius=0.08];\draw  [fill=black](4,-6) circle [radius=0.08]; \draw  [fill=black](4,-6.5) circle [radius=0.08]; \draw  [fill=black](5.3,-6.5) circle [radius=0.08];\draw  [fill=black](5,-6) circle [radius=0.08]; \draw  [fill=black](5.3,-7) circle [radius=0.08]; \draw  [fill=blue](4.7,-6.5) circle [radius=0.08];\draw  [fill=red](4.7,-7) circle [radius=0.08]; \draw  [fill=black](7,-6) circle [radius=0.08];\draw  [fill=red](7,-6.5) circle [radius=0.08]; \draw  [fill=black](6.6,-7) circle [radius=0.08]; \draw  [fill=black](6.6,-6.5) circle [radius=0.08];\draw  [fill=black](6.6,-6.5) circle [radius=0.08]; \draw  [fill=red](7.4,-6.5) circle [radius=0.08]; \draw  [fill=black](7.4,-7.5) circle [radius=0.08]; \draw  [fill=blue](7.4,-7) circle [radius=0.08];

\node [above] at (1,-6){$v_{0}$};
\node [above] at (7,-6){$v_{n}$};

\end{tikzpicture}
\end{center}
\caption{Lobster}\label{lobster}
\end{figure}

\begin{exmp}
Let $T$ be a lobster tree. Then either $v_0$ or $v_n$, or both of them, only contain a single long leg, and the other major stems contain only midlegs and pendants. In this case,
$\beta(T)+\sum\limits_{i=1,m_{i}\geq 2}^{l}{m_{i}}-l\leq \beta(T^{3})\leq \beta(T)+\sum\limits_{i=1,m_{i}\geq 2}^{l}{m_{i}}-l +3.$ (see Figure \ref{lobster})
\end{exmp}

\begin{exmp}
Let $T$ be a spider tree. If it is a star, then $\beta{(T^{3})}=\beta{(T)}+1$, otherwise, we have $\beta(T)+\sum\limits_{i=1,m_{i}\geq 2}^{l}{m_{i}}-l\leq \beta(T^{3})\leq \beta(T)+\sum\limits_{i=1,m_{i}\geq 2}^{l}{m_{i}}-l+2$. (see Figure \ref{spider})
\end{exmp}
\begin{figure}
\begin{center}
\begin{tikzpicture}

\draw (-1,0)--(1,0);\draw (0,1)--(0,-1);

\draw  [fill=blue](0,0) circle [radius=0.07]; \draw  [fill=red](-1,0) circle [radius=0.07]; \draw  [fill=red](1,0) circle [radius=0.07]; \draw  [fill=black](0,1) circle [radius=0.07];\draw  [fill=red](0,-1) circle [radius=0.07];

\draw (2,0)--(4,0); \draw (3,1)--(3,-1);\draw (4,0)--(4,-1);

\draw [fill=red](2,0) circle [radius=0.07];\draw  [fill=red](3,-1) circle [radius=0.07];\draw [fill=red](3,1) circle [radius=0.07];\draw  [fill=blue](3,0) circle [radius=0.07];\draw  [fill=blue](4,0) circle [radius=0.07]; \draw  [fill=black](4,-1) circle [radius=0.07];

\draw (5,0)--(7,0);\draw (6,1)--(6,-1.5);\draw (7,0)--(7,-1.5);

\draw [fill=red](5,0) circle [radius=0.07];\draw  [fill=red](6,1) circle [radius=0.07];\draw [fill=black](6,-1.5) circle [radius=0.07];\draw [fill=black](6,-.75) circle [radius=0.07];\draw  [fill=black](6,0) circle [radius=0.07];\draw  [fill=blue](7,0) circle [radius=0.07]; \draw  [fill=blue](7,-.75) circle [radius=0.07];\draw [fill=red](7,-1.5) circle [radius=0.07];

\draw (8,0)--(10,0); \draw (9,1.5)--(9,-1.5);\draw (9,1.5)--(10,1.5);\draw (9,-1.5)--(8,-1.5); \draw (10,0)--(10,-1.5);\draw (8,0)--(8,1.5);

\draw [fill=black](8,0) circle [radius=0.07];\draw  [fill=black](8,0) circle [radius=0.07];\draw [fill=black](8,.75) circle [radius=0.07];\draw [fill=black](8,1.5) circle [radius=0.07];\draw [fill=black](9,0) circle [radius=0.07];\draw  [fill=blue](10,0) circle [radius=0.07];\draw  [fill=blue](10,-.75) circle [radius=0.07]; \draw  [fill=red](10,-1.5) circle [radius=0.07];\draw [fill=red](10,1.5) circle [radius=0.07]; \draw [fill=black](9,1.5) circle [radius=0.07]; \draw [fill=black](9,.75) circle [radius=0.07];\draw [fill=red](8,-1.5) circle [radius=0.07];\draw [fill=black](9,-1.5) circle [radius=0.07];\draw [fill=black](9,-.75) circle [radius=0.07];

\draw (11,0)--(15,0);\draw (12,1)--(12,-1.5);\draw (12,0)--(12.9,-1); \draw (12,0)--(12.9,1);\draw (12.9,1)--(13.9,1);

\draw [fill=red](11,0) circle [radius=0.07];\draw  [fill=red](12,1) circle [radius=0.07];\draw [fill=black](12,0) circle [radius=0.07];\draw [fill=blue](12,-.75) circle [radius=0.07];\draw [fill=red](12,-1.5) circle [radius=0.07];\draw  [fill=black](12.9,-1) circle [radius=0.07];\draw  [fill=black](12.45,-.5) circle [radius=0.07]; \draw  [fill=blue](13,0) circle [radius=0.07];\draw [fill=blue](14,0) circle [radius=0.07]; \draw [fill=red](15,0) circle [radius=0.07]; \draw [fill=black](12.45,.5) circle [radius=0.07];\draw [fill=black](12.9,1) circle [radius=0.07];\draw [fill=red](13.9,1) circle [radius=0.07];

\end{tikzpicture}
\end{center}
\caption{Spider}\label{spider}
\end{figure}

In a $d$-regular tree $T$, only pendants can be attached to every major stem. Let the length of a central path $P$ in $T$ be $2t$, where $t$ is the depth of $T$. Then the total number of pendants in $T$ is $d(d-1)^{t-1}$.

\begin{exmp}
Let $T$ be a $d$-regular tree $(d\geq 3)$ with depth $t$. If $t\leq 2$ then $\beta{(T)}\leq \beta{(T^{3})}\leq \beta{(T)}+d$ and for $t\geq 3$, $\beta{(T)}\leq \beta{(T^{3})}\leq \beta{(T)}+d(d-1)^{t-3}(d-2)$. (see Figure \ref{d-regular})
\end{exmp} 
  
\begin{figure}
\begin{center}
\begin{tikzpicture}


\draw (0,0)--(-2,-1);\draw (0,0)--(0,-1); \draw (0,0)--(2,-1);
\draw  [fill=black](0,0) circle [radius=0.08]; \draw  [fill=black](0,-1) circle [radius=0.08];\draw  [fill=black](2,-1) circle [radius=0.08]; \draw  [fill=black](-2,-1) circle [radius=0.08];

\draw (-2,-1)--(-2.5,-2);\draw (-2,-1)--(-1.5,-2);  \draw (0,-1)--(-.5,-2);\draw (0,-1)--(.5,-2);   \draw (2,-1)--(1.5,-2);\draw (2,-1)--(2.5,-2);
\draw  [fill=blue](-2.5,-2) circle [radius=0.08];\draw  [fill=black](-1.5,-2) circle [radius=0.08];\draw  [fill=blue](-.5,-2) circle [radius=0.08];\draw  [fill=black](.5,-2) circle [radius=0.08];\draw  [fill=blue](1.5,-2) circle [radius=0.08];\draw  [fill=black](2.5,-2) circle [radius=0.08];

\draw (-2.5,-2)--(-3,-3);\draw (-2.5,-2)--(-2.2,-3);     \draw (-1.5,-2)--(-1.8,-3);\draw (-1.5,-2)--(-1.2,-3);     \draw (-.5,-2)--(-.8,-3);\draw (-.5,-2)--(-.2,-3);      \draw (.5,-2)--(.2,-3);\draw (.5,-2)--(.8,-3);        \draw (1.5,-2)--(1.2,-3);\draw (1.5,-2)--(1.8,-3);      \draw (2.5,-2)--(2.2,-3);\draw (2.5,-2)--(2.8,-3);

\draw  [fill=red](-3,-3) circle [radius=0.08];\draw  [fill=black](-2.2,-3) circle [radius=0.08];\draw  [fill=red](-1.8,-3) circle [radius=0.08];\draw  [fill=black](-1.2,-3) circle [radius=0.08];\draw  [fill=red](-.8,-3) circle [radius=0.08];\draw  [fill=black](-.2,-3) circle [radius=0.08];\draw  [fill=red](.2,-3) circle [radius=0.08];\draw  [fill=black](.8,-3) circle [radius=0.08];\draw  [fill=red](1.2,-3) circle [radius=0.08];\draw  [fill=black](1.8,-3) circle [radius=0.08];\draw  [fill=red](2.2,-3) circle [radius=0.08];
\draw  [fill=black](2.8,-3) circle [radius=0.08];

\draw[<-] (3.5,0)--(3.5,-1.2);\draw[->] (3.5,-1.7)--(3.5,-3.1); \node at (3.5,-1.5){$t$-depth};
\draw (8,0)--(6,-1);\draw (8,0)--(8,-1); \draw (8,0)--(10,-1);
\draw  [fill=blue](8,0) circle [radius=0.08]; \draw  [fill=blue](6,-1) circle [radius=0.08];\draw  [fill=blue](8,-1) circle [radius=0.08]; \draw  [fill=black](10,-1) circle [radius=0.08];

\draw (6,-1)--(5.5,-2);\draw (6,-1)--(6.5,-2);  \draw (8,-1)--(7.5,-2);\draw (8,-1)--(8.5,-2);   \draw (10,-1)--(9.5,-2);\draw (10,-1)--(10.5,-2);
\draw  [fill=red](5.5,-2) circle [radius=0.08];\draw  [fill=black](6.5,-2) circle [radius=0.08];\draw  [fill=red](7.5,-2) circle [radius=0.08];\draw  [fill=black](8.5,-2) circle [radius=0.08];\draw  [fill=red](9.5,-2) circle [radius=0.08];\draw  [fill=black](10.5,-2) circle [radius=0.08];

\end{tikzpicture}
\end{center}
\caption{A $d$-regular tree}\label{d-regular}
\end{figure}
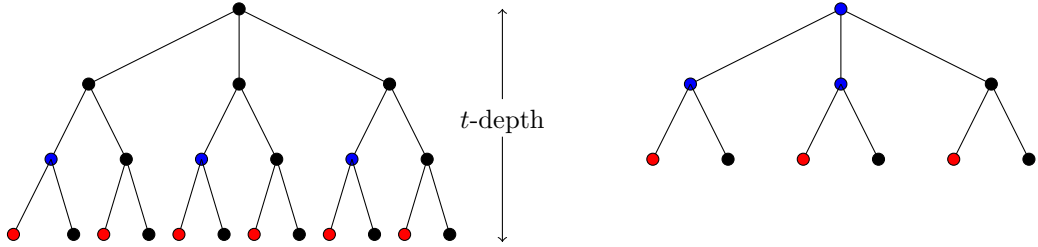

\section{Characterization of some restricted $T^{3}$ satisfying $\beta{(T^{3})}=\beta{(T)}$}

\begin{prop}\label{pair}
Let $T=(V, E)$ be a tree having at least two major stems. If $\beta{(T^{3})}=\beta{(T)}$, then there exists atleast one pair of major stems $v_{i},v_{j}$ satisfying $d_{T}(v_{i},v_{j})\equiv 1 \hspace{0.2em} \mbox{or} \hspace{0.2em} 2 \hspace{0.2em}( \mbox{mod} \hspace{0.2em} 3)$.
\end{prop}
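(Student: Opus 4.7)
I will prove the contrapositive: assuming every pair of distinct major stems has $T$-distance $\equiv 0 \pmod 3$, I will derive $\beta(T^{3})>\beta(T)$.

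Suppose for contradiction that $S$ is a resolving set of $T^{3}$ with $|S|=\beta(T)$. By Corollary \ref{30}, for each major stem $v$ the set $S$ contains at least $l_v - 1$ vertices from the legs of $v$; since legs of distinct major stems are vertex-disjoint, summing gives $|S|\geq\sum_v(l_v-1)=\beta(T)$ using Corollary \ref{6}. Equality forces $S$ to consist of exactly one vertex from each of $l_v-1$ ``picked'' legs of every major stem $v$, leaving one ``unpicked'' leg $L_v$ disjoint from $S$, with no other vertices in $S$.

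The key uniformity: for any $s\in S$ lying on a picked leg of $v(s)$ and any major stem $v_i$, the path from $s$ to $v_i$ passes through $v(s)$, so $d_T(s,v_i)=d_T(s,v(s))+d_T(v(s),v_i)\equiv d_T(s,v(s))\pmod 3$ by hypothesis. Define $r(s):=d_T(s,v(s))\bmod 3$ and $R=\{r(s):s\in S\}$.

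Next I would derive $R\ni 2$. Choose two distinct major stems $v_i,v_j$ such that no other major stem lies on the $v_i$--$v_j$ path (such a pair exists); let $d_T(v_i,v_j)=3k\geq 3$ and let $p_1,p_2$ be the spine vertices at distances $1,2$ from $v_i$. Neither $p_1$ nor $p_2$ lies in $S$ (they are not major-stem leg vertices), and neither is a major stem (that would violate the mod-$3$ hypothesis since $d_T(v_i,p_1)=1$, $d_T(v_i,p_2)=2$). Applying Lemma \ref{3} to the edge $p_1p_2\in E(T)$, any resolver $s\in S$ must satisfy $\min\{d_T(s,p_1),d_T(s,p_2)\}\equiv 0\pmod 3$. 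For any such $s$, the path from $v(s)$ to the relevant endpoint passes through $v_i$ (if $v(s)$ lies on the $v_i$-side) or through $v_j$ (if on the $v_j$-side), giving $d_T(s,p_1)\equiv r(s)+1\pmod 3$ or $d_T(s,p_2)\equiv r(s)+(3k-2)\equiv r(s)+1\pmod 3$ respectively. In either case the condition becomes $r(s)=2$, so $R\ni 2$.

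Finally, pick $s^*\in S$ with $r(s^*)=2$; then $s^*$ lies at position $3m+2$ ($m\geq 0$) on a picked leg $L'$ of some major stem $v_c$, forcing $L'$ to be a midleg or long leg. Let $y_1$ be the vertex of $L'$ at position $1$ from $v_c$ (so $y_1\notin S$, as $s^*\neq y_1$ is the unique $S$-vertex on $L'$), and let $u_c$ be the vertex of $L_{v_c}$ at position $1$ (so $u_c\notin S$). The pair $(y_1,u_c)$ has $d_T(y_1,u_c)=2$ with $v_c$ the unique intermediate vertex. By Lemma \ref{3}, any resolver $s\in S$ must lie in $T_{y_1}=L'\setminus\{v_c\}$ or $T_{u_c}=L_{v_c}\setminus\{v_c\}$, with $\min\{d_T(s,y_1),d_T(s,u_c)\}\equiv 0$ or $2\pmod 3$. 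The second set is empty and the first contains only $s^*$. But $d_T(s^*,y_1)=3m+1$ and $d_T(s^*,u_c)=3m+3$, so $\min=3m+1\equiv 1\pmod 3$, failing the condition. Hence $(y_1,u_c)$ is unresolved, contradicting that $S$ is a resolving set. The most delicate step is verifying $R\ni 2$ when extra major stems may cling to the $v_i$--$v_j$ path; selecting $v_i,v_j$ adjacent in the ``major-stem contraction'' of $T$ sidesteps this issue and is the essential structural move.
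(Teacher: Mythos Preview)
Your argument tracks the paper's proof closely: both proceed by contraposition, note that a metric basis of size $\beta(T)$ must consist of exactly one vertex per ``picked'' leg at each major stem, and then play off a spine edge (your $p_1p_2$, the paper's $u_0v_0$) against a leg pair (your $y_1,u_c$, the paper's $x,x'$). You run the two pieces in the opposite order from the paper---first forcing $2\in R$ via the spine edge, then contradicting via the legs---whereas the paper first argues $R\subseteq\{0,1\}$ via the legs and then contradicts via the spine. Logically these are the same two claims, just contraposed, and your packaging through the residue map $r(s)$ is clean.

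There is, however, a real gap in your verification of $2\in R$. You dispose of resolvers whose major stem $v(s)$ lies on the $v_i$-side or $v_j$-side, then assert that choosing $v_i,v_j$ ``adjacent in the major-stem contraction'' handles the case where $v(s)$ sits in a branch off an interior vertex $p_m$ of the $v_i$--$v_j$ path. It does not: if three major stems hang symmetrically off a common centre $c$, no two of them are adjacent in the Steiner sense, while under the looser reading (no third major stem \emph{on} the path) branches off interior points are not excluded at all. What actually closes this case is the standing hypothesis itself: if $v_k$ hangs off $p_m$, then $d_T(v_i,v_k)\equiv 0$ and $d_T(v_j,v_k)\equiv 0$ force $m\equiv 0$ and $d_T(p_m,v_k)\equiv 0\pmod 3$, whence $\min\{d_T(s,p_1),d_T(s,p_2)\}=d_T(s,p_2)\equiv r(s)+(m-2)\equiv r(s)+1\pmod 3$, and again $r(s)=2$. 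This is exactly how the paper handles its ``$v_3$ attached at an intermediate vertex $s$'' case in Claim~2. Once you replace your final sentence with this computation, the proof is complete.
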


\begin{proof}
On the contrary, let every pair of major stems have their distances as $0$ (mod $3$). Using Theorem \ref{0}, we observe that since $\beta{(T^{3})}=\beta{(T)}$, except for one, from all the legs of every major stem of $T$, we can pick at most one vertex for the metric basis of $T^{3}$.
Hence, from Theorem \ref{lowerbound} it can be easily verified that the number of midlegs attached to any major stem is at most one.

\vspace{0.3em}
\begin{claim}
\textit{To choose vertices for a metric basis $S$ of $T^{3}$, if we select a vertex from a long leg (or a midleg)  attached to any major stem $v$ (say), it is mandatory to choose the vertex which is at distance $0$ or $1$ (mod $3$) from $v$ on the same leg.} 
\end{claim}
\vspace{0.3em}

\begin{claimproof}
If we select a vertex (say $y$) in $S$ from a long leg/midleg $L$ attached to the major stem $v$ satisfying $d_{T}(v,y)\equiv 2$ (mod $3$), then the vertices $x$ and $x^{\prime}$ will possess the same code in $T^{3}$ measured from $y,z$ where $x,x^{\prime}$ are two neighbours of $v$ on the legs $L, L^{\prime}$ respectively, where $L^{\prime}$ is the leg that is left aside from vertex selection for $S$ and $z$ is a vertex from any branch of $v$ apart from $L$ and $L^{\prime}$. Hence $d_{T}(v,y)\equiv 0$ or $1$ (mod $3$).
\end{claimproof}

Using Claim $1$, we construct a vertex subset $S$ of $V$ by inserting a vertex from each leg (apart from one) of all the major stems that are at a distance of $0$ or $1$ (mod $3$) from the major stems.

\vspace{0.3em}
\begin{claim} \textit{There will always remain at least one pair of vertices in $T^{3}$ which can not be resolved by any vertex of $S$.} 
\end{claim}

\begin{claimproof}
Let $v_{1},v_{2}$ be two major stems satisfying $d_{T}(v_{1},v_{2})=3m$ for some integer $m$. Now consider the vertices $u_{0},v_{0}$ of an edge $e(=u_{0}v_{0})\in E$ on the intermediate path joining the vertices $v_{1},v_{2}$ in $T$ so that $d_{T}(v_{1},u_{0})\equiv 1$ (mod $3$) and $d_{T}(v_{2},v_{0})\equiv 1$ (mod $3$). Using the result of Claim $1$ one can verify that there is no vertex $x$ coming from the legs of $v_{1},v_{2}$, which can resolve $u,v$ as min $\{d_{T}(x,u_{0}), d_{T}(x,v_{0})\}\equiv 1$ or $2$ (mod $3$).
Similarly, it can be verified that $u_{0},v_{0}$ can not be resolved in $T^{3}$ by any $x$ coming from the legs of some other major stems that occur in the extended path of $v_{1}$ or $v_{2}$ as $d_{T}(v_{i},v_{j})\equiv 0$ (mod $3$) for all $v_{i}\neq v_{j}$. 

\vspace{0.3em}
\noindent Now we show that $u_{0},v_{0}$ can not be resolved by any vertex $x$ comes from the leg of a major stem $v_{3}$ that is connected with an intermediate vertex $s$ of the path joining $v_{1}, v_{2}$. For this, first, we consider the case when $s=u_{0}$ or $v_{0}$. Without loss of generality, if $v_{0}=s$ then $d_{T}(v_{3},s)\equiv 2$ (mod $3$) as $d_{T}(v_{3},v_{2})\equiv 0$ (mod $3$). Therefore, $d_{T}(v_{1},v_{3})=d_{T}(v_{1},u_{0})+d_{T}(u_{0},v_{0})+d_{T}(v_{0},v_{3})\equiv 1+1+2$ (mod $3$) $\equiv 1$ (mod $3$). This introduces a contradiction. 
Next, we consider the case when $s\neq u,v$. Without loss of generality, we assume min $\{d_{T}(v_{3},v_{0}),d_{T}(v_{3},u_{0})\}=d_{T}(v_{3},v_{0})$. Therefore, $s$ must lie within the intermediate path of $v-v_{2}$. 

\vspace{0.4em}
\noindent If $d_{T}(v_{3},s)\equiv 1$ (mod $3$) then $d_{T}(s,v_{2})\equiv 2$ (mod $3$) as $d_{T}(v_{2},v_{3})\equiv 0$ (mod $3$). Hence, $d_{T}(v_{0},s)=d_{T}(v_{0},v_{2})-d_{T}(s,v_{2})\equiv 2$ (mod $3$). Therefore, $d_{T}(v_{1},v_{3})=d_{T}(v_{1},u_{0})+d_{T}(u_{0},v_{0})+d_{T}(v_{0},s)+d_{T}(s,v_{3})\equiv 1+1+2+1$ 
(mod $3$) $\equiv 2$ (mod $3$), which is not true as per our assumption. 

\vspace{0.3em}
\noindent  If $d_{T}(v_{3},s)\equiv 2$ (mod $3$), then we get $d_{T}(v_{1},v_{3})\equiv 1$ (mod $3$), therefore a similar contradiction arises.

\vspace{0.3em}
\noindent If $d_{T}(v_{3},s)\equiv 0$ (mod $3$), then $d_{T}(v_{0},s)=d_{T}(v_{0},v_{2})-d_{T}(s,v_{2})=1-0$ (mod $3$) $\equiv 1$ (mod $3$). Hence, $d_{T}(v_{0},v_{3})=d_{T}(v_{0},s)+d_{T}(s,v_{3})=1+0$ (mod $3$)$\equiv 1$ (mod $3$).

\vspace{0.3em}
Therefore, following Lemma \ref{3} one can verify that $u_{0},v_{0}$ can not be resolved by any vertex $x\in S$ coming from the legs of $v_{3}$ as $d_{T}(x,v_{3})\equiv 0$ or $1$ (mod $3$) from Claim $1$ implies $d_{T}(x,v_{0})\equiv 1$ or $2$ (mod $3$).
\end{claimproof}

\noindent Since $\beta{(T^{3})}=\beta{(T)}$, any metric basis of $T^{3}$ can only be constructed in the above way, as we did for $S$. But from Claim $2$ we will always get a pair of vertices in $T^{3}$ that can not be resolved by any vertex of $S$. Hence, we get a contradiction. Therefore, we will always get a pair of major stems (say $\{v_{i},v_{j}\}$) satisfying $d_{T}(v_{i},v_{j})\equiv 1$ or $2$ (mod $3$).
\end{proof}





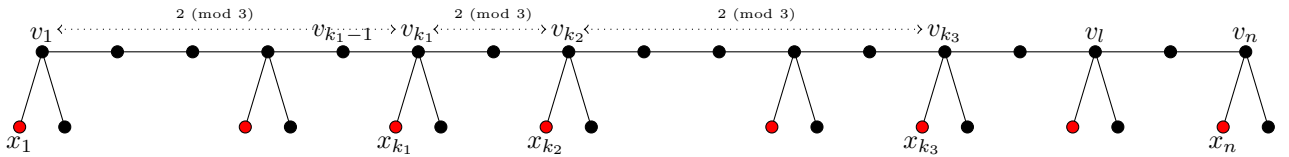
\begin{figure}[ht]
\begin{center}
\begin{tikzpicture}


\draw[<-] [dotted] (0.2,0.3)--(4,0.3);
\draw[->] [dotted] (4,0.3)--(4.7,0.3);
\node at  (2.3,0.5) {\tiny{2 (mod $3$)}};

\draw[<-] [dotted] (5.2,0.3)--(6,0.3);
\draw[->] [dotted] (6,0.3)--(6.7,0.3);

\node at (6,0.5){\tiny{2 (mod $3$)}};

\draw[<-] [dotted] (7.2,0.3)--(8.3,0.3);
\draw[->] [dotted] (8.3,0.3)--(11.7,0.3);

\node at (9.5,0.5){\tiny{2 (mod $3$)}};

\draw (0,0)--(16,0);
\draw (0,0)--(-0.3,-1); \draw (0,0)--(0.3,-1);
\draw (3,0)--(2.7,-1); \draw (3,0)--(3.3,-1);
\draw (5,0)--(4.7,-1); \draw (5,0)--(5.3,-1);
\draw (7,0)--(6.7,-1); \draw (7,0)--(7.3,-1);
\draw (10,0)--(9.7,-1); \draw (10,0)--(10.3,-1);
\draw (12,0)--(11.7,-1); \draw (12,0)--(12.3,-1);
\draw (14,0)--(13.7,-1); \draw (14,0)--(14.3,-1);
\draw (16,0)--(15.7,-1); \draw (16,0)--(16.3,-1);

\node [above] at (0,0) {$v_1$}; \node [above] at (5,0) {$v_{k_1}$};

\node[above] at (4,0) {$v_{k_{1}-1}$};
\node [above] at (7,0) {$v_{k_2}$};
\node [above] at (12,0) {$v_{k_3}$};
\node [above] at (14,0) {$v_l$};
\node [above] at (16,0) {$v_n$};
\node [below] at (-0.3,-1) {$x_1$};
\node [below] at (4.7,-1) {$x_{k_1}$};
\node [below] at (6.7,-1) {$x_{k_2}$};
\node [below] at (11.7,-1) {$x_{k_3}$};
\node [below] at (15.7,-1) {$x_n$};

\draw  [fill=black](0,0) circle [radius=0.08]; \draw  [fill=black](1,0) circle [radius=0.08];\draw  [fill=black](2,0) circle [radius=0.08]; \draw  [fill=black](3,0) circle [radius=0.08]; \draw  [fill=black](4,0) circle [radius=0.08];\draw  [fill=black](5,0) circle [radius=0.08];\draw  [fill=black](6,0) circle [radius=0.08];\draw  [fill=black](7,0) circle [radius=0.08];\draw  [fill=black](8,0) circle [radius=0.08];\draw  [fill=black](9,0) circle [radius=0.08];\draw  [fill=black](10,0) circle [radius=0.08];\draw  [fill=black](11,0) circle [radius=0.08];\draw  [fill=black](12,0) circle [radius=0.08];\draw  [fill=black](13,0) circle [radius=0.08];\draw  [fill=black](14,0) circle [radius=0.08];\draw  [fill=black](15,0) circle [radius=0.08];\draw  [fill=black](16,0) circle [radius=0.08];

\draw  [fill=red](-0.3,-1) circle [radius=0.08]; \draw  [fill=black](0.3,-1) circle [radius=0.08];\draw  [fill=red](2.7,-1) circle [radius=0.08]; \draw  [fill=black](3.3,-1) circle [radius=0.08]; \draw  [fill=red](4.7,-1) circle [radius=0.08];\draw  [fill=black](5.3,-1) circle [radius=0.08];\draw  [fill=red](6.7,-1) circle [radius=0.08];\draw  [fill=black](7.3,-1) circle [radius=0.08];\draw  [fill=red](9.7,-1) circle [radius=0.08];\draw  [fill=black](10.3,-1) circle [radius=0.08];\draw  [fill=red](11.7,-1) circle [radius=0.08];\draw  [fill=black](12.3,-1) circle [radius=0.08];\draw  [fill=red](13.7,-1) circle [radius=0.08];\draw  [fill=black](14.3,-1) circle [radius=0.08];\draw  [fill=red](15.7,-1) circle [radius=0.08];\draw  [fill=black](16.3,-1) circle [radius=0.08];
\end{tikzpicture}
\end{center}
\caption{Trees satisfying $\beta{(T^{3})}=\beta{(T)}$}\label{specialtree}
\end{figure}

Below, we characterize those cube of trees that possess all their stems on one of their central paths, \footnote{also, known as diametral paths}, stems contain pendants only as their legs and have their metric dimension similar to the metric dimension of their associated trees.

\begin{thm}
Let $T=(V, E)$ be a tree where every stem contains pendants only as their legs in $T$.  If all the stems lie on a central path, $P=(x_{1}, v_{1},v_{2},\hdots,v_{n},x_{n})$
\footnote{$v_{1},\hdots v_{n}$ are path vertices and $x_{1},x_{n}$ be pendants attached to $v_{1},v_{n}$ respectively} of $T$ (see Figure \ref{specialtree}), then $\beta{(T^{3})}=\beta{(T)}$ if and only if the following conditions are satisfied:.

\begin{enumerate}
\item There are atleast three major stems $v_{k_{1}}, v_{k_{2}},v_{k_{3}}$ between $v_{1},v_{n}$ along $P$ such that $d_{T}(v_{1},v_{k_{1}})\equiv 2$ $(\text{mod} \hspace{0.3em} 3)$, $d_{T}(v_{1},v_{k_{2}})\equiv 1$ $(\text{mod} \hspace{0.3em} 3)$, $d_{T}(v_{1},v_{k_{3}})\equiv 0$ $(\text{mod} \hspace{0.3em} 3)$.
Distances between 
any of the above pairs considered to be minimum satisfying the above criteria.

\item There does not exist any stem between $v_{1},v_{k_{1}} (v_{k_{3}}, v_{n})$, which is at $1$ $(\text{mod} \hspace{0.3em} 3)$ distance from $v_{1} (v_{n}$) along $P$. 


\item There can not exist any pair of stems $\{v_{m},v_{k}\}$ such that $d_{T}(v_{m},v_{k})\equiv 1 $ $(\text{mod} \hspace{0.3em} 3)$ and $d_{T}(v_{k},v_{r})\equiv 0$ $(\text{mod} \hspace{0.3em} 3)$ for all major stems $v_{r}$ satisfying $v_{k_{2}}<v_{m}<v_{k}\leq v_{r}\leq v_{n}$  on $P$ where $v_{k}$ is the minimum distance major stem from $v_{m}$ along $P$. 
\end{enumerate}
\end{thm}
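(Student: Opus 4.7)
The plan is to prove both directions of the equivalence by reducing the question to a precise mod $3$ condition on positions of major stems along the central path $P$. The starting observation is that since every leg in $T$ is a pendant, Corollary \ref{6} says a metric basis of $T$ is obtained by choosing, at each major stem $v$, all but one of its pendants; each such chosen pendant lies at distance $1$ from $v$. By Lemma \ref{1} and Corollary \ref{1.5}, $\beta(T^{3})=\beta(T)$ holds if and only if one can select these pendants so that the resulting set $S$ (of cardinality $\beta(T)$) is also a resolving set of $T^{3}$. For any vertex $w\in V$ not on the chosen leg, a pendant $x$ attached to major stem $v_i$ satisfies $d_T(x,w)=d_T(v_i,w)+1$, so the residue of $d_T(x,w)\pmod 3$ is governed by the residue of $d_T(v_i,w)\pmod 3$. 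Thus the existence of suitable witnesses, as required by Theorem \ref{neccsuff}, is entirely controlled by the mod $3$ distribution of major stems along $P$.

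For the sufficiency direction, I assume conditions (1)--(3) hold and build $S$ by choosing $l_v-1$ pendants at every major stem $v$. Then I would verify the five requirements of Theorem \ref{neccsuff} for every pair $u,v$ with $d_T(u,v)\le 5$ (larger distances being automatic by Lemma \ref{4}, since any pendant lying in $T_u$ or $T_v$ resolves $u,v$). For an edge $uv$ of $T$ lying on $P$, one needs a vertex of $S$ at distance $\equiv 0\pmod 3$ from $u$ or $v$, which translates to requiring a major stem at a specific residue relative to $\{u,v\}$; condition (1) supplies such a stem in the interior via $v_{k_1},v_{k_2},v_{k_3}$, while conditions (2) and (3) guarantee the corresponding witness near the two ends of $P$ and in the region past $v_{k_3}$ respectively. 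The cases of edges in $T^{2}$, $T^{3}$ and pairs at distance $4$ or $5$ are handled analogously: each such pair demands a witness whose distance mod $3$ to $u$ (or $v$) is prescribed, and the three conditions together ensure every residue class is realized by some major stem appearing in the correct component.

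For the necessity direction, I would argue by contrapositive: if any of (1), (2), (3) fails, I will exhibit a pair of vertices that no admissible $S$ (a metric basis of $T$ of the form above) can resolve in $T^{3}$, contradicting $\beta(T^{3})=\beta(T)$. If (1) fails, then some residue class modulo $3$ is absent among $\{d_T(v_1,v_i)\bmod 3 : v_i \text{ major stem}\}$, and the argument of Proposition \ref{pair} adapts to produce an unresolvable edge $u_0v_0$ on $P$ whose endpoints satisfy $\min\{d_T(x,u_0),d_T(x,v_0)\}\equiv 1\text{ or }2\pmod 3$ for every candidate witness $x\in S$. If (2) fails, a stem $v_m$ near $v_1$ at distance $\equiv 1\pmod 3$ pairs with the leaf $x_1$ (or a pendant at $v_1$) to create an edge of $T^{3}$ for which every pendant chosen from $S$ sits at a prohibited residue. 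If (3) fails, the pair of stems $v_m,v_k$ with $d_T(v_m,v_k)\equiv 1\pmod 3$ and every subsequent stem at $\equiv 0\pmod 3$ from $v_k$ creates an edge on the path between $v_m$ and $v_k$ whose resolution would require a pendant from a stem at residue $2$ or $1$ on the far side --- which by hypothesis does not exist.

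The main obstacle is the necessity direction: each failure mode requires pinpointing a specific pair $\{u_0,v_0\}$ and then showing that for every pendant $x$ chosen in any admissible $S$, either $x\in T_{u_0,v_0}$ with the wrong residue, or $x$ lies in $T_{u_0}\cup T_{v_0}$ but still fails the distance conditions of Theorem \ref{neccsuff}. This forces a careful enumeration of where $x$ can be relative to $u_0,v_0$ and a tight tracking of $d_T(x,u_0)\bmod 3$ using the fact that $d_T(x,v_i)\equiv 1\pmod 3$ for the hosting stem $v_i$. The sufficiency direction, by contrast, is essentially a verification: once the three residue classes are covered in the positions forced by (1)--(3), Theorem \ref{neccsuff} is satisfied automatically and $S$ resolves $T^{3}$.
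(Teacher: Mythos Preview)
Your overall framework matches the paper's: reduce to the mod $3$ positions of major stems along $P$, and use Theorem~\ref{neccsuff} as the workhorse for both directions. The sufficiency sketch is essentially the paper's plan (verify the five clauses of Theorem~\ref{neccsuff} using the pendants $x_{1},x_{k_{1}},x_{k_{2}},x_{k_{3}},x_{n}$), so that part is fine.

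The necessity direction, however, has a real gap in the choice of ``unresolvable'' pairs. For the failure of condition~(3) you propose to take an edge $u_{0}v_{0}$ lying on the path between $v_{m}$ and $v_{k}$, and argue that no pendant on the far side (beyond $v_{k}$) has the right residue. But Theorem~\ref{neccsuff} does not force the witness to lie on the far side: any pendant attached to a major stem $v_{i}\le v_{m}$ also sits in $T_{u_{0}}$, and the hypothesis of condition~(3) says nothing about the residues $d_{T}(v_{i},u_{0})\bmod 3$ for such $v_{i}$. So your pair can be resolved from the left and the contradiction does not materialise. The same problem occurs in your treatment of condition~(2). The paper avoids this by taking one endpoint of the pair to be the \emph{unchosen pendant} $x_{m}$ at the stem $v_{m}$: since $x_{m}$ is a leaf, $T_{x_{m}}=\{x_{m}\}$ is trivial, and for the distance-$2$ pair $\{x_{m},v_{m+1}\}$ (respectively $\{x_{m},v_{m-1}\}$) clause~2 of Theorem~\ref{neccsuff} forces every witness into $T_{v_{m+1}}$ (respectively $T_{v_{m-1}}$), i.e.\ onto the side where the residue hypothesis bites. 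That asymmetry is the missing idea in your argument.

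There is a second, smaller issue with condition~(1): its failure is not merely ``some residue class is absent''. The statement also fixes an order $v_{k_{1}}<v_{k_{2}}<v_{k_{3}}$ with minimality, and the paper obtains this by a chained argument: resolving $\{v_{1},x_{1}\}$ forces $v_{k_{1}}$; resolving $\{v_{k_{1}-1},v_{k_{1}}\}$ then forces $v_{k_{2}}>v_{k_{1}}$ (the case $v_{k_{2}}<v_{k_{1}}$ being excluded by minimality of $v_{k_{1}}$); and resolving $\{v_{k_{2}-1},v_{k_{2}}\}$ forces $v_{k_{3}}>v_{k_{2}}$. Invoking Proposition~\ref{pair} alone does not deliver this ordering.
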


\begin{proof}
Let $\beta{(T^{3})}=\beta{(T)}$. Then any metric basis $S$ of $T^{3}$ must contain one except all the pendants attached to every major stem of the tree $T$ by Corollary \ref{30}. Since $T$ contains legs as only pendants, $v_{1},v_{n}$ are the first and last major stems on the central path $P$. 


\vspace{0.4em}
\noindent \textit{proof of condition $1$ and $2$.}
To resolve $v_{1}, x_{1}$ there must exist a pendant $x_{k_{1}}\in S$ such that $d_{T}(v_{1},x_{k_{1}})\equiv0$ (mod $3$) by condition $1$ of Theorem \ref{neccsuff}. Hence, without loss of generality, we choose $v_{k_{1}}$ to be the minimum distance major stem from $v_{1}$ satisfying $d_{T}(v_{1},v_{k_{1}})\equiv 2$ (mod $3$). Let $v_{k_{1}-1}$ be the neighbour of the major stem $v_{k_{1}}$ satisfying $v_{1}<v_{k_{1}-1}<v_{k_{1}}$ along $P$. Now to resolve $v_{k_{1}-1}, v_{k_{1}}$, we need a major stem $v_{k_{2}}$ and its pendant $x_{k_{2}}\in S$ such that min $\{d_{T}(v_{k_{1}},x_{k_{2}}),d_{T}(v_{k_{1}-1},x_{k_{2}})\equiv 0$ (mod $3$). If $v_{1}<v_{k_{2}}<v_{k_{1}}$ on $P$, then $d_{T}(v_{1},v_{k_{2}})=d_{T}(v_{1},v_{k_{1}})-(d_{T}(v_{k_{1}},v_{k_{1}-1})+d_{T}(v_{k_{1}-1},v_{k_{2}})\equiv 2$ (mod $3$), which is not possible by the choice of $v_{k_{1}}$. Therefore, $v_{k_{2}}>v_{k_{1}}$ on $P$. We consider $v_{k_{2}}$ to be the minimum distance major stem from $v_{k_{1}}$ satisfying $d_{T}(v_{k_{1}},v_{k_{2}})\equiv d_{T}(v_{1},v_{k_{1}})+d_{T}(v_{k_{1}},v_{k_{2}})\equiv 2$ (mod $3$).

\vspace{0.7em}
Let there be a stem $v_{m}$ between $v_{1}, v_{k_{1}}$ satisfying $d_{T}(v_{1},v_{m})\equiv 1$ (mod $3$) and let $x_{m}$ be the pendant of $v_{m}$ which is not in $S$. Then, to resolve $x_{m},v_{m-1}$, we need a pendant $x_{p}\in S$ attached to some major stem $v_{p}$ within $v_{1},v_{m}$ along $P$ such that $d_{T}(x_{p},v_{m-1})\equiv 0 $ or $2$ (mod $3$) by condition \ref{two distance} of Corollary \ref{distance}.
Therefore, $d_{T}(v_{p},v_{m-1})\equiv 1 \hspace{0.3em} \text{or} \hspace{0.3em} 2$ (mod $3$). If $d_{T}(v_{p},v_{m-1})\equiv 1 $ (mod $3$), we get $d_{T}(v_{1},v_{p})=d_{T}(v_{1},v_{m})-(d_{T}(v_{p},v_{m-1})+d_{T}(v_{m-1},v_{m}))\equiv 2$ (mod $3$), which contradicts the choice of $v_{k_{1}}$. 
Again, if $d_{T}(v_{p},v_{m-1})\equiv 2$ (mod $3$) then we get $d_{T}(v_{1},v_{p})\equiv 1$ (mod $3$).
Since $v_{1}<v_{p}<v_{k_{1}}$ and $d_{T}(v_{1},v_{p})\equiv 1$ (mod $3$), proceeding similarly as above, contradiction arises after finite steps when we get the minimum distance stem at $1$ (mod $3$) distance from $v_{1}$.
 
\vspace{0.4em}
Further, to resolve $v_{k_{2}}, v_{k_{2}-1}$, there must exist some pendant $x_{k_{3}}\in S$ attached to some major stem $v_{k_{3}}$ such that min $\{d_{T}(x_{k_{3}},v_{k_{2}}), d_{T}(x_{k_{3}}, v_{k_{2}-1})\}\equiv 0$ (mod $3$). If $v_{k_{1}}<v_{k_{3}}<v_{k_{2}}$ along $P$, then $d_{T}(v_{k_{1}},v_{k_{3}})=d_{T}(v_{k_{1}},v_{k_{2}})-d_{T}(v_{k_{2}},v_{k_{3}})\equiv 2$ (mod $3$), which contradicts the choice of $v_{k_{2}}$. 
Again, if $v_{1}<v_{k_{3}}<v_{k_{1}}$, then $d_{T}(v_{1},v_{k_{3}})=d_{T}(v_{1},v_{k_{1}})-d_{T}(v_{k_{1}},v_{k_{3}})\equiv 1$ (mod $3$), which is not possible from the above paragraph. Therefore, $v_{k_{2}}<v_{k_{3}}<v_{n}$, which imply $d_{T}(v_{1},v_{k_{3}})=d_{T}(v_{1},v_{k_{2}})+d_{T}(v_{k_{2}},v_{k_{3}})\equiv 0$ (mod $3$).  We consider $v_{k_{3}}$ to be the minimum distance major stem from $v_{k_{2}}$ satisfying $d_{T}(v_{k_{2}},v_{k_{3}})\equiv 2$ (mod $3$).

\vspace{0.4em}
One can also verify with a similar approach as we did earlier and find that if $v_{l}$ is the minimum distance stem from $v_{n}$ satisfying $d_{T}(v_{l},v_{n})\equiv 1$ (mod $3$) where $v_{k_{3}}\leq v_{l}<v_{n}$. Then $d_{T}(v_{l},v_{n})$ must be equivalent to $0$ or $2$ (mod $3$)
   
\vspace{0.4em}
\noindent \textit{proof of condition $3$.} Let there exist a pair of stems $\{v_{m},v_{k}\}$ satisfying $d_{T}(v_{m},v_{k})\equiv 1$ (mod $3$) and $d_{T}(v_{k},v_{r})\equiv 0$ (mod $3$) for all major stems $v_{r}$ such that $v_{k_{2}}<v_{m}<v_{k}\leq v_{r}\leq v_{n}$ and $v_{k}$ is the minimum distance 
major stem from $v_{m}$ along $P$. Let $v_{m+1}$ be the neighbour of $v_{m}$ satisfying $v_{m}<v_{m+1}\leq v_{k}$ along $P$. Then $d_{T}(v_{m+1},v_{k})\equiv 0$ (mod $3$), which imply $d_{T}(v_{m+1},v_{r})\equiv 0$ (mod $3$) and hence $d_{T}(v_{m+1},x_{r})\equiv 1$ (mod $3$), where $x_{r}$ be any pendant attached to $v_{r}$ that is in $S$. Let $x_{m}$ be a pendant of $v_{m}$ that is not in $S$, then $x_{m},v_{m+1}$ can not be resolved by any pendant of $S$ since $d_{T}(v_{k},v_{r})\equiv 0$ (mod $3$) for all $v_{r}$ satisfying $v_{k}\leq v_{r}\leq v_{n}$. Hence, following Lemma \ref{3} we get a contradiction as $\beta{(T^{3})}=\beta{(T)}$. Therefore, the result follows.

\vspace{0.4em}
\noindent \textit{Sufficient:}  We consider $T$ to be a tree that satisfies all the given conditions. From Corollary \ref{1.5} it is already known that $\beta{(T^{3})}\geq \beta{(T)}$. Hence, to prove $\beta{(T^{3})}=\beta{(T)}$, it is sufficient to show that $\beta{(T^{3})}\leq \beta{(T)}$. Let $S$ be any metric basis of $T$, then all except one pendant from every major stem of $T$ are the only members of $S$ by Theorem \ref{0}. Now we show that $S$ is a resolving set of $T^{3}$ also.
For this, it is sufficient to prove that any two vertices $u,v\in V\setminus S$ can be resolved by at least one vertex of $S$. Let $x_{1},x_{k_{1}},x_{k_{2}},x_{k_{3}},x_{n}$ be the pendants attached to the major stems $v_{1},v_{k_{1}},v_{k_{2}},v_{k_{3}}, v_{n}$ respectively, which are included in $S$. (see Figure \ref{specialtree})
\vspace{0.2em}

i) If both $u$ and $v$ appear on the central path $P$ or one among them attached to a stem on $P$, then at least one among $x_{1},x_{k_{1}}, x_{k_{2}}, x_{k_{3}},x_{n}$ resolves $u,v$ in $T^{3}$ using Theorem \ref{neccsuff}. 

\vspace{0.4em}

When $d_{T}(u,v)\neq 2$, then the existence of the above pendants is ensured by condition $1$. Next, we consider the situation when $d_{T}(u,v)=2$ and $u$ is a pendant attached to some stem $v_{p}$ and $v$ is on the central path $P$.
Let $v=v_{p+1}$ and $v_{k}$ be the minimum distance major stem from $v_{n}$. Then $v_{p}\leq v_{k}$ on $P$. From condition $2$, it is known that $d_{T}(v_{k},v_{n})\equiv 0 \hspace{0.3em} \mbox{or} \hspace{0.3em} 2$ (mod $3$). When $v_{k_{2}}<v_{p}\leq v_{n}$, then if $d_{T}(v_{p},v_{k})\equiv 1$ (mod $3$), then by using condition $3$, we get $d_{T}(v_{k},v_{n})\equiv 2$ (mod $3$). Therefore, $d_{T}(v_{p+1},x_{n})\equiv 2$ (mod $3$). Hence, $x_{n}$ resolves $u,v$ in $T^{3}$ by Lemma \ref{3}. In other situations, i.e., when $d_{T}(v_{p},v_{k})\equiv 0$ or $2$, then $x_{k}$ or $x_{n}$ resolves $u,v$.
Again, if $v_{1}\leq v_{p}\leq v_{k_{1}}$ ($v_{k_{1}}<v_{p}\leq v_{k_{2}}$), then either $x_{k_{1}}$ or $x_{k_{2}}$ ($x_{k_{2}}$ or $x_{k_{3}}$) resolves $u,v$ in $T^{3}$ by Lemma \ref{3}. 

\vspace{0.4em}
Let $v=v_{p-1}$. Then, by Lemma \ref{3} at least one among $x_{k_{1}},x_{1}$ resolves $u,v$ when $v_{k_{1}}\leq v_{p}\leq v_{n}$ along $P$. Again, if $v_{1}<v_{p}< v_{k_{1}}$, then $d_{T}(v_{1},v_{p})\not\equiv 2$ (mod $3$) from the definition of $v_{k_{1}}$. Further, $d_{T}(v_{1},v_{p})\not\equiv 1$ (mod $3$) by condition $2$.  Hence $d_{T}(v_{1},v_{p})\equiv 0$ (mod $3$) and therefore $d_{T}(x_{1},v_{p-1})=d_{T}(x_{1},v_{1})+d_{T}(v_{1},v_{p-1})\equiv 0$ (mod $3$). Hence, $x_{1}$ resolves $u,v$ in $T^{3}$ by Lemma \ref{3}. 
\vspace{0.4em}

ii) If both $u,v$ are pendants attached to two different stems, $v_{m_{1}}, v_{m_{2}}$ respectively. Let $v_{m_{1}}<v_{m_{2}}$ on $P$. 

\vspace{0.2em}
First, we consider the situation when $d_{T}(u,v)=3$. If $d_{T}(v_{1},v_{m_{1}})\equiv 1$ (mod $3$), then $x_{1}$ resolves $u,v$ by condition $3$ of Theorem \ref{neccsuff} since min$\{d_{T}(x_{1},u), d_{T}(x_{1},v)\}\equiv 0$ (mod $3$).

Next, if $d_{T}(v_{1},v_{m_{1}})\equiv 2$ (mod $3$), then either $v_{k_{1}}>v_{m_{2}}$ or $v_{k_{1}}< v_{m_{2}}$ on $P$. If $v_{k_{1}}>v_{m_{2}}$, then
$d_{T}(v_{k_{1}},v_{m_{2}})=d_{T}(v_{1},v_{k_{1}})-d_{T}(v_{1},v_{m_{1}})-d_{T}(v_{m_{1}},v_{m_{2}})\equiv 2$ (mod $3$). Therefore, 
min $\{d_{T}(x_{k_{2}},v),d_{T}(x_{k_{2}},u)\}=d_{T}(x_{k_{2}},v)=d_{T}(x_{k_{2}},v_{k_{2}})+d_{T}(v_{k_{2}},v_{k_{1}})+d_{T}(v_{k_{1}},v_{m_{2}})+d_{T}(v_{m_{2}},v)\equiv 0$ (mod $3$). Hence, $x_{k_{2}}$ resolves $u,v$ in $T^{3}$. Again, if $v_{k_{1}}<v_{m_{2}}$, $d_{T}(v_{k_{1}},v_{m_{1}})=d_{T}(v_{1},v_{m_{1}})-d_{T}(v_{1},v_{k_{1}})\equiv 0$ (mod $3$) and hence it is easy to verify that $x_{k_{2}}$ resolves $u,v$ by condition $3$ of Theorem \ref{neccsuff}. 

Again, if $d_{T}(v_{1},v_{m_{1}})\equiv 0$ (mod $3$), then either $v_{k_{3}}\leq v_{m_{1}}$ or $v_{m_{1}}<v_{k_{3}}$. 
If $v_{k_{3}}\leq v_{m_{1}}$, then $x_{k_{1}},x_{1},x_{k_{2}}$ resolves $u,v$ depending on the situations $d_{T}(v_{k_{3}},m_{1})\equiv 0, 1,2$ (mod $3$) respectively. If $v_{m_{1}}<v_{k_{3}}$, then if   $v_{k_{3}}>v_{m_{2}}$, i.e., $v_{1}\leq v_{m_{1}}<v_{m_{2}}<v_{k_{3}}$ since $d_{T}(v_{1},v_{k_{3}})\equiv 0$ (mod $3$). It is easy to note that $v_{k_{1}}\neq v_{m_{1}}$ and min $\{d_{T}(x_{k_{1}},u),d_{T}(x_{k_{1}},v)\}\equiv 0$ (mod $3$). Hence, $x_{k_{1}}$ resolves $u,v$ in $T^{3}$ by condition $3$ of Theorem \ref{neccsuff}.

\vspace{0.2em}
If $d_{T}(u,v)=4$, then $x_{1}$ resolves $u,v$ when $d_{T}(v_{1},v_{m_{1}})\equiv 0$ or $1$ (mod $3$), otherwise, $x_{k_{1}}$ resolves $u,v$ in $T^{3}$ by condition $4$ of Theorem \ref{neccsuff}. Again, if $d_{T}(u,v)=5$, then $|d_{T}(x_{1},u)-d_{T}(x_{1},v)|=5$ or $|d_{T}(x_{1},u)-d_{T}(x_{1},v)|=3$ as per the situation $v_{m_{1}}\neq v_{1}$ or $v_{m_{1}}=v_{1}$.
Hence $x_{1}$ resolves $u,v$ in both circumstances by condition $5$ of Theorem \ref{neccsuff}. 
\end{proof}

 \section{Conclusion}
In this article, we have determined the necessary and sufficient conditions for a resolving set to be a metric basis for the cube of trees. Also, we developed the upper and lower bounds of the metric dimension of the same graph class. Further, we discuss the characterization of some restricted class of cube of trees satisfying $\beta{(T^{3})}=\beta{(T)}$. The following open problems are immediate from our study:

\vspace{0.7em}
\textbf{Problem $1.$} \textit{Find the bounds of the metric dimension of $T^{r}$ for any positive integer $r\geq 4$}. 

\vspace{0.4em}
\textbf{Problem $2$.}\textit{ Characterize the class of cube of trees that satisfy $\beta{(T^{r})}=\beta{(T)}$} for any positive integer $r$.


\begin{thebibliography}{99}

\bibitem{AE}
R. Adar, L. Epstein, The weighted $2$-metric dimension of trees in the non-landmarks model, \textit{Discrete  Optimization}, \textit{17}, 2015, 123--135.

\bibitem{AAE}
M.M. Alholi, O.A. AbuGneim, H.A. Ezeh, Metric dimension of some path
related graphs, \textit{J: Pure Appl. Math}, \textbf{3}, 2017, 149-157.

\bibitem{BDP}
N. Bousquet, Q. Deschamps, A. Parreau, Metric dimension parameterized by treewidth in chordal graphs, https://arxiv.org/abs/2303.10646.


\bibitem{BEE}
Zuzana Beerliova, Felix Eberhard, Thomas Erlebach, et al., Network Discovery and Verification, \textit{IEEE Journal on Selected Areas in
Communications}, \textbf{24}, 2006, 2168–2181.


\bibitem{CEJO}
Gary Chartrand, Linda Eroh, Mark A. Johnson, Ortrud R. Oellermann, 
Resolvability in graphs and the metric dimension of a graph, \textit{Discrete Applied Mathematics}, \textbf{105}, 2000, 99–113.


\bibitem{CZ}
G. Chartrand, Ping. Zhang, The theory and applications of resolvability in graphs, A survey, \textit{Cong. Numer.}, \textbf{160}, 2003, 47--68. 


\bibitem{GJ}
Michael R. Garey and David S. Johnson. Computers and Intractability: A Guide to the Theory of NP-Completeness. \textit{ W. H. Freeman}, 1979.

\bibitem{HM}
Frank Harary, Robert A. Melter, On the metric dimension of a graph,\textit{Ars Combinatorica}, \textbf{2}, 1976, 191--195.

\bibitem{HMSW}
Carmen Hernando, Merce Mora, Peter J. Slater, David R. Wood, Fault-Tolerant Metric Dimension of Graphs, \textit{Joint Proc. Int. Wrk.shps. Conv. in 
Discr. Struct. and Metric $\&$ Conv. Graph Th.}, \textbf{5}, 2008, 81--85. 


\bibitem{JRA}
Imran Javaid, M. Tariq Rahim, Kashif Ali, Families of Regular Graphs with constant Metric Dimension, \textit{Utilitas Mathematica}, \textbf{75}, 2008, 21--33.


\bibitem{K}
Samir Khuller,J Balaji Raghavachari, Azriel Rosenfeld, Landmarks in graphs,
\textit{Discrete Applied Mathematics}, \textbf{70}, 1996, 217--229.  

\bibitem{KRR}
Samir Khuller, Balaji Raghavachari and Azriel Rosenfeld, “Localization in Graphs,” \textit{Technical Report CS-Tr-3326, University of Maryland at College Park}, 1994.

\bibitem{LG}
K. Liu, N. Abu-Ghazaleh, Virtual coordinte back tracking for void travarsal in geographic routing, \textit{Lecture Notes Comp. Sci}, \textbf{4104} (2006), 46--59.

\bibitem{MT}
R. A. Melter, I. Tomescu, Metric bases in digital geometry, \textit{Computer Vision,Graphics, and Image Processing},\textbf{25} (1984), 113--121.

\bibitem{NAKK}
S. Nawaz, M. Ali, M. A. Khan, and S. Khan, “Computing
metric dimension of power of total graph,” IEEE Access,\textbf{9}, 2021, 74550–74561.

\bibitem{Saha}
L. Saha, M. Basak, K. Tiwary, K. C. Das, Y. Shang, On the Characterization of a Minimal Resolving Set for Power of Paths, \textit{mathematics}, \textbf{10} (14), 2022, 2445.  


\bibitem{St}
Andras Sebo, Eric Tannier, On metric generators of graph, \textit{Math.
Oper. Res.}, \textbf{29}, 2004, 383–-393.

\bibitem{S}
Peter J. Slater, Leaves of trees, \textit{Proc. 6th Southeastern Conf. on Combinatorics, Graph Theory, and Computing, Congressus Numerantium}, \textit{14} 1975, 549--559.


\end{thebibliography}
\end{document}